\numberwithin{equation}{section}
 \newtheorem{theorem}{Theorem}[section]
\newtheorem{lemma}[theorem]{Lemma}
\newtheorem{remark}[theorem]{Remark}
\newtheorem{assumption}[theorem]{Assumption}
\begin{document}
\title{
An immersed Crouzeix-Raviart finite element method in 2D and 3D based on discrete level set functions
}
\author{
Haifeng Ji\footnotemark[1]
}
\footnotetext[1]{School of Science, Nanjing University of Posts and Telecommunications, Nanjing, Jiangsu 210023, China  (hfji@njupt.edu.cn, hfji1988@foxmail.com)}

\date{}
\maketitle

\begin{abstract}
This paper is devoted to the construction and analysis of immersed finite element (IFE) methods in three dimensions. Different from the 2D case, the  points of intersection of the interface and the edges of a tetrahedron are usually not coplanar, which makes the extension of the original 2D IFE methods based on a piecewise  linear approximation of the interface to the 3D case not straightforward.
 We address this coplanarity issue by an approach where the interface is  approximated via discrete level set functions. 
 This approach is very convenient from a computational point of view since in many practical applications the exact interface is often unknown, and only a discrete level set function is available.
 As this approach has also not be considered in the 2D IFE methods, in this paper we present a unified framework for both 2D and 3D cases. We consider an IFE method based on the traditional Crouzeix-Raviart element using integral values on faces as degrees of freedom. The novelty of the proposed IFE is the unisolvence of basis functions on arbitrary triangles/tetrahedrons without any angle restrictions even for anisotropic interface problems, which is advantageous over the IFE using nodal values as degrees of freedom.  The optimal bounds for the IFE interpolation errors are proved on shape-regular triangulations.  For the IFE method, optimal a priori error and condition number estimates are derived with constants independent of the location of the interface with respect to the unfitted mesh. The extension to anisotropic interface problems with tensor coefficients is also discussed. Numerical examples supporting the theoretical results are provided.
\end{abstract}

\textbf{Keywords.} interface problem, nonconforming, immersed finite element, unfitted mesh, three dimensions, anisotropic

\textbf{AMS subject classifications.} 65N15, 65N30, 35R05

\section{Introduction}\label{sec_introduce}
Let $\Omega$  be a bounded and convex polygonal/polyhedral  domain in $\mathbb{R}^N$, $N=2$ or $3$, and 
the interface $\Gamma$ be a  $C^2$ compact hypersurface without boundary which is embedded in $\Omega$ and divides $\Omega$ into two disjoint subdomains $\Omega^+$ and $\Omega^-$.  Without loss of generality, we assume that $\Omega^-$ lies inside $\Omega$ strictly, i.e.,  $\Gamma=\partial\Omega^-$.
Consider  the following second-order elliptic interface problem with variable coefficients
\begin{align}
-\nabla\cdot(\beta(\mathbf{x})\nabla u)&=f  ~~~~\qquad\mbox{in } \Omega^+\cup \Omega^-,\label{p1.1}\\
[u]_{\Gamma}&=0~~~~ \qquad\mbox{on } \Gamma,\label{p1.2}\\
[\beta\nabla u\cdot \mathbf{n}]_{\Gamma}&=0~~~~ \qquad\mbox{on } \Gamma,\label{p1.3}\\
u&=0 ~~~~\qquad\mbox{on } \partial\Omega,\label{p1.4}
\end{align}
where $f\in L^2(\Omega)$,  $\mathbf{n}(\mathbf{x})$ denotes the unit normal vector to $\Gamma$ at point $\mathbf{x}\in\Gamma$ pointing from $\Omega^-$ to $\Omega^+$, $[v]_\Gamma$  denotes the jump of a function $v$ across the interface, i.e.,
\begin{equation*}
[v]_{\Gamma}:=v^+|_\Gamma-v^-|_\Gamma~~ \mbox{on}~\Gamma ~~\mbox{with} ~~v^\pm=v|_{\Omega^\pm},
\end{equation*}
and the coefficient $\beta(\mathbf{x})$  can be discontinuous across the interface $\Gamma$ and is assumed to be piecewise smooth such that
\begin{equation}\label{p1.5}
\beta(\mathbf{x})|_{\Omega^\pm}=\beta^\pm(\mathbf{x}) ~~\mbox{ with }~~\beta^\pm(\mathbf{x})\in C^1(\overline{\Omega^\pm}).
\end{equation}
We also assume  there exist positive constants $\beta^\pm_{m}$ and $\beta^\pm_{M}$  such that $\beta^\pm_{m}\leq \beta^\pm(\mathbf{x})\leq\beta^\pm_{M}$. 
The anisotropic interface problem, i.e., the coefficient $\beta(\mathbf{x})$ is replaced by a discontinuous tensor-valued function $\mathbb{B}(\mathbf{x})$, will be discussed in Section~\ref{sec_exten}.
 
Interface problems appear in many engineering and physical applications involving multiple materials and interfaces. The main challenge is that  the solutions of interface problems are not smooth across interfaces due to interface conditions and discontinuous coefficients. It is well known that  finite element methods (FEMs) can be used to solve interface problems with optimal accuracy based on body-fitted and shape-regular meshes (see, e.g., \cite{0xu,bramble1996finite,chen1998finite}). However, it is not trivial and time-consuming to generate such a shape-regular mesh that fits complex or moving interfaces especially in 3D. So, FEMs based on unfitted meshes, which are completely independent of the interface, have become highly attractive for interface problems. 
There are many FEMs using unfitted meshes (called unfitted mesh methods) in the literature, for example, the unfitted Nitsche's method \cite{hansbo2002unfitted,2016High,burman2015cutfem}, the extended FEM \cite{fries2010extended}, the multiscale FEM \cite{multi_CHU2010}, the FEM for high-contrast problems \cite{GuzmanJSC2017}, the immersed virtual element method (IVEM) \cite{Cao2021Immersed},  and the immersed finite element (IFE) method \cite{li1998immersed,Li2003new,taolin2015siam,Slimane2020,Chang2011An}.  

We are interested in the IFE method which is distinguished from other unfitted mesh methods in the fact that the degrees of freedom are the same as that of standard FEMs and the IFE space is isomorphic to the standard finite element space. This feature is advantageous when dealing with moving interface problems \cite{guo2021SIAM} and interface inverse problems \cite{guo2019fixed}.  The basic idea of the IFE method is fairly simple: modify the basis functions of standard FEMs on interface elements according to the jump conditions to capture the jump behaviors of the exact solution. Actually, this idea can be traced back to the fundamental work of Babu\v{s}ka et al. in \cite{babuvska1994special} where special basis functions are obtained by solving local problems to capture the behaviors of exact solutions. We note that the local problems are also used in the virtual element method (VEM) with variable coefficients. As pointed out in \cite{Cao2021Immersed}, for 1D problems with a piecewise constant coefficient $\beta$, the IFE space in \cite{li1998immersed}, the finite element space in \cite{babuvska1994special}, and the virtual element space are exactly identical due to the trivial 1D geometry, but they are distinguished in higher dimensions because of the more complicated geometry.
For the existing 2D IFE methods (see, e.g., \cite{Li2003new,taolin2015siam,2021ji_IFE}), the interface inside an interface element is approximated by a straight line connecting the intersection points of the interface and the edges of the element, and a piecewise linear function is used as the IFE basis function so that the interface conditions can be satisfied on the straight line.  The optimal approximation capabilities of the IFE spaces and the analysis of the related IFE methods have been presented in \cite{taolin2015siam,wang2020rigorous,GuoIMA2019,2021ji_IFE}.  

However, for real 3D problems, the IFE methods and the corresponding theoretical analysis are relatively few; see \cite{kafafy2005three,hou2013weak,2021ji_IFE} for linear IFE methods on tetrahedral meshes,  \cite{vallaghe2010trilinear,guo2020immersed,guo2021solving} for trilinear IFE methods on cuboidal meshes,  and \cite{han2021pife} for some applications. 
Different from the 2D case,  the  points of intersection of the interface and the edges of an interface element are usually not coplanar. So, it is impossible to make a piecewise linear function continuous at these intersection points.  In the methods proposed in \cite{kafafy2005three,guo2020immersed,guo2021solving}, the authors carefully choose three of intersection points to determine a plane approximating the exact interface and construct IFE functions based on the interface conditions defined on the plane. Another approach proposed in \cite{hou2013weak} is to use all the intersection points, leading to  to an over-determined system of equations. The IFE functions are then obtained by the least squares method. To our best knowledge, there is no theoretical results for this approach.
 
In this paper we address the coplanarity issue by using a continuous linear approximation of the interface which can be obtained by the zero level set of  the linear interpolant  of the signed distance function to the interface.  
Since this approach has also not be discussed in 2D, we present a unified framework for both 2D and 3D cases.
%
%
%
Different from the method in \cite{guo2020immersed,guo2021solving,2021ji_IFE}, we use the discrete interface in both the IFE space and the IFE method, which is very convenient from a computational point of view.  Note that the approximation of the interface in \cite{guo2020immersed,guo2021solving,2021ji_IFE}  is only used for providing connection conditions for the piecewise polynomial basis functions, and the IFE functions and methods are defined according to the exact interface since the approximate  interface on interface elements cannot form a continuous surface. 
We develop and analyze an IFE method based on the conventional Crouzeix-Raviart finite element  using integral values as degrees of freedom \cite{crouzeix1973conforming} on triangular/tetrahedral meshes, which is an extension of our previous work on 2D nonconforming IFE methods in \cite{2021ji_nonconform}.
We prove that the IFE basis functions  are unisolvent on arbitrary triangles/tetrahedrons without any angle restrictions.
We note that if the values on vertices are used as degrees of freedom, the unisolvence relies on some mesh assumptions; see for example the “no-obtuse-angle” condition introduced in \cite{2021ji_IFE} for both 2D and 3D problems.  
We prove the optimal approximation capabilities of the proposed IFE space under the assumption that the triangulation is shape-regular.  
The proof is based on the method proposed in \cite{2021ji_IFE} where tangential gradients and  their corresponding extensions are defined via the signed distance function near the interface.
The approximation of the interface via discrete level set functions brings new difficulties because there may be  no intersection points between the exact interface and the discrete interface on an interface element.
For the proposed IFE method, by establishing the trace inequality and the inverse inequality for IFE functions, we derive the optimal a priori error and condition number estimates with constants independent of the location of the interface with respect to the unfitted mesh. We also provide some numerical examples to  validate the theoretical results.

Another contribution of this paper is the finding that for the case of tensor-valued coefficients, the IFE basis functions based on integral-value degrees of freedom are also unisolvent on arbitrary triangles/tetrahedrons, and consequently the theoretical analysis proposed in this paper can be readily extended to this case.  It should be noted that the IFE basis functions based on nodal-value degrees of freedom may not exist for this case even in 2D (see \cite{An2014A}).

The remainder of the paper is organized as follows. In Section~\ref{sec_pre},  some necessary notations and preliminary results are presented. In Section~\ref{sec_IFE}, we first introduce unfitted meshes, the discrete interface, and the assumptions and notations, and then present the immersed Crouzeix-Raviart finite elements.  Section~\ref{sec_pro} is devoted to the properties of the proposed IFEs including the unisolvence of the IFE basis functions and the optimal approximation capabilities of the IFE space. In Section~\ref{sec_IFEM}, the IFE method and the corresponding analysis are presented.
In Section~\ref{sec_exten}, the extension to anisotropic interface problems is discussed. Numerical examples are given in Section~\ref{lem_num}. Finally, some conclusions are drawn in Section~\ref{sec_con}.

\section{Preliminaries}\label{sec_pre}
Let $k\geq 0$ be an integer and $1\leq p\leq \infty$ be a real number.  We adopt the standard notation $W^k_p(D)$ for Sobolev spaces on a domain $D$ with the norm $\|\cdot\|_{W^k_p(D)}$ and the seminorm $|\cdot|_{W^k_p(D)}$. Specially, $W^k_2(D)$ is denoted by $H^{k}(D)$ with the norm  $\|\cdot\|_{H^{k}(D)}$ and the seminorm $|\cdot|_{H^{k}(D)}$. As usual $H_0^1(D)=\{v\in H^1(D) : v=0 \mbox{ on }\partial D\}$.
For any subdomain $D\subset \mathbb{R}^N$, we define subdomains  $D^\pm:=D\cap \Omega^\pm$ and a broken Sobolev space via
\begin{equation*}
H^k(\cup D^\pm)=\{v\in L^2(D) : v|_{D^\pm} \in H^k(D^\pm)\},
\end{equation*}
which is equipped with the norm $\|\cdot\|_{H^k(\cup D^\pm)}$ and the  semi-norm $|\cdot|_{H^k(\cup D^\pm)}$ satisfying
$$
\|\cdot\|^2_{H^k(\cup D^\pm)}=\|\cdot\|^2_{H^k(D^+)}+\|\cdot\|^2_{H^k(D^-)}, \quad|\cdot|^2_{H^k(\cup D^\pm)}=|\cdot|^2_{H^k(D^+)}+|\cdot|^2_{H^k(D^-)}.
$$
For the elliptic interface problems, we introduce a subspace of $H^2(\cup D^\pm)$, 
\begin{equation}\label{def_H2}
\widetilde{H}^2(D)=\{v\in H^2(\cup D^\pm) :  [v]_{\Gamma\cap D}=0,~ [\beta\nabla v\cdot \mathbf{n}]_{\Gamma\cap D}=0\}.
\end{equation}
Obviously, $\widetilde{H}^2(D)\subset H^1(D)$.  Under the setting introduced in Section~\ref{sec_introduce}, it can be shown that (see \cite{2012Uniform}) the interface problem (\ref{p1.1})-(\ref{p1.5}) has a unique solution $u\in \widetilde{H}^2(\Omega)\cap H_0^1(\Omega)$ satisfying the following a priori estimate
\begin{equation}\label{regular}
\|u\|_{H^2(\cup \Omega^\pm)}\leq C\|f\|_{L^2(\Omega)}.
\end{equation}
In our analysis, we will frequently use the the signed distance function
\begin{equation*}
d(\mathbf{x})=\left\{
\begin{aligned}
&\mbox{dist}(\mathbf{x},\Gamma)\quad&& \mbox{ if } \mathbf{x}\in\overline{\Omega^+},\\
&-\mbox{dist}(\mathbf{x},\Gamma)\quad&& \mbox{ if } \mathbf{x}\in\Omega^-.
\end{aligned}\right.
\end{equation*}
Define the $\delta$-neighborhood of $\Gamma$ by
\begin{equation*}
U(\Gamma,\delta)=\{x\in\mathbb{R}^N: \mbox{dist}(\mathbf{x},\Gamma)< \delta\}.
\end{equation*}
It is well known that $d(\mathbf{x})$ is globally Lipschitz-continuous, and for $\Gamma\in C^2$, there exists $\delta_{0}>0$ such that $d(\mathbf{x})\in C^2\left(U(\Gamma,\delta_0)\right)$ (see \cite{foote1984regularity}) and the closest point mapping $\mathbf{p}: U(\Gamma,\delta_0)\rightarrow \Gamma$ maps every $\mathbf{x}$ to precisely one point at $\Gamma$. In other words, every point $\mathbf{x}\in U(\Gamma,\delta_0)$ can be uniquely written as 
\begin{equation*}
\mathbf{x}=\mathbf{p}(\mathbf{x})+d(\mathbf{x})\mathbf{n}(\mathbf{p}(\mathbf{x})).
\end{equation*}
The existence of $\delta_0$ is a standard result in differential geometry. For example, for $N=3$, we require that $\delta_0< \left(\max_{i=1,2}\|\kappa_i\|_{L^\infty(\Gamma)}\right)^{-1}$, where $\kappa_1$ and $\kappa_2$ are the principal curvatures of $\Gamma$ (see (2.2.4) in \cite{demlow2007adaptive}).

Define $U^\pm(\Gamma,\delta)=U(\Gamma,\delta)\cap \Omega^\pm$. We recall the following fundamental inequality that will be useful in our analysis.
\begin{lemma}\label{lem_strip}
For all $\delta\in (0,\delta_0]$, there is a constant $C$ depending only on $\Gamma$ such  that  
\begin{equation}\label{strip}
\|v\|^2_{L^2(U^\pm(\Gamma,\delta))}\leq C\left(\delta \|v\|^2_{L^2(\Gamma)}+\delta^2  \|\nabla v\|^2_{L^2(U^\pm(\Gamma,\delta))}\right)\qquad\forall  v\in H^1(U^\pm(\Gamma,\delta)).
\end{equation}
\end{lemma}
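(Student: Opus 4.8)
The plan is to pass to the tubular‑neighborhood (normal) coordinates given by the map $\Phi(\mathbf{y},s):=\mathbf{y}+s\,\mathbf{n}(\mathbf{y})$. By the facts recalled just before the lemma, for every $\delta\in(0,\delta_0]$ the map $\Phi$ is a $C^1$ diffeomorphism of $\Gamma\times(0,\delta)$ onto $U^+(\Gamma,\delta)$ (with inverse $\mathbf{x}\mapsto(\mathbf{p}(\mathbf{x}),d(\mathbf{x}))$), and likewise of $\Gamma\times(-\delta,0)$ onto $U^-(\Gamma,\delta)$; I will treat only the ``$+$'' case, the ``$-$'' case following by the substitution $s\mapsto-s$. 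The first, and only genuinely technical, step is to record two‑sided bounds on the Jacobian $J(\mathbf{y},s):=|\det D\Phi(\mathbf{y},s)|$. Written in principal‑curvature coordinates it equals $J(\mathbf{y},s)=\prod_{i=1}^{N-1}\bigl(1+s\,\kappa_i(\mathbf{y})\bigr)$ (with $\kappa_1,\kappa_2$ the principal curvatures if $N=3$, and $\kappa_1$ the curvature if $N=2$); since $\delta_0$ was chosen so that $\delta_0\max_i\|\kappa_i\|_{L^\infty(\Gamma)}<1$, one obtains $0<c_1\le J(\mathbf{y},s)\le c_2$ for all $\mathbf{y}\in\Gamma$ and $s\in(0,\delta_0]$, with $c_1,c_2$ depending only on $\Gamma$. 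This is where the $C^2$‑regularity of $\Gamma$ and the precise choice of $\delta_0$ enter; everything afterwards is a one‑dimensional estimate on each normal fiber followed by Fubini, and I expect this Jacobian bound (together with making rigorous sense of the boundary value of $v$ on $\Gamma$) to be the main point to get right.

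Since $U^+(\Gamma,\delta)$ is, for $\delta\le\delta_0$, a $C^2$ collar of $\Gamma$, the space $C^\infty(\overline{U^+(\Gamma,\delta)})$ is dense in $H^1(U^+(\Gamma,\delta))$ and the trace operator $H^1(U^+(\Gamma,\delta))\to L^2(\Gamma)$ is bounded; as both sides of \eqref{strip} are then continuous in the $H^1$‑norm, it suffices to prove \eqref{strip} for $v\in C^\infty(\overline{U^+(\Gamma,\delta)})$. For such $v$ and fixed $\mathbf{y}\in\Gamma$, the fundamental theorem of calculus along the normal ray gives
\begin{equation*}
v\bigl(\Phi(\mathbf{y},s)\bigr)=v(\mathbf{y})+\int_0^s\nabla v\bigl(\Phi(\mathbf{y},t)\bigr)\cdot\mathbf{n}(\mathbf{y})\,dt,\qquad 0<s<\delta .
\end{equation*}
Squaring, using $(a+b)^2\le2a^2+2b^2$ and Cauchy--Schwarz on the $t$‑integral, and then integrating in $s$ over $(0,\delta)$ yields the per‑fiber inequality
\begin{equation*}
\int_0^\delta\bigl|v(\Phi(\mathbf{y},s))\bigr|^2\,ds\le2\delta\,|v(\mathbf{y})|^2+2\delta^2\int_0^\delta\bigl|\nabla v(\Phi(\mathbf{y},t))\bigr|^2\,dt .
\end{equation*}

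Finally, I would integrate the per‑fiber inequality over $\mathbf{y}\in\Gamma$ against the surface measure $d\sigma$ and change variables back to Cartesian coordinates using the Jacobian bounds from the first step: on the left, $\|v\|_{L^2(U^+(\Gamma,\delta))}^2=\int_\Gamma\!\int_0^\delta|v(\Phi(\mathbf{y},s))|^2J(\mathbf{y},s)\,ds\,d\sigma\le c_2\int_\Gamma\!\int_0^\delta|v(\Phi(\mathbf{y},s))|^2\,ds\,d\sigma$, while on the right $\int_\Gamma\!\int_0^\delta|\nabla v(\Phi(\mathbf{y},t))|^2\,dt\,d\sigma=\int_{U^+(\Gamma,\delta)}|\nabla v|^2J^{-1}\,d\mathbf{x}\le c_1^{-1}\|\nabla v\|_{L^2(U^+(\Gamma,\delta))}^2$. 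Combining these with the per‑fiber inequality gives \eqref{strip} with $C=2\max\{c_2,c_2/c_1\}$, which depends only on $\Gamma$. One can bypass the density step altogether: applying Fubini to $v$ and $\nabla v$ expressed in normal coordinates shows that for a.e.\ $\mathbf{y}\in\Gamma$ the map $t\mapsto v(\Phi(\mathbf{y},t))$ lies in $H^1(0,\delta)$, so the displayed identity holds for a.e.\ $\mathbf{y}$ with $v(\mathbf{y})$ read as the $t\to0^+$ trace of that one‑variable function, which agrees a.e.\ on $\Gamma$ with the $L^2(\Gamma)$‑trace of $v$; the remainder of the argument is unchanged.
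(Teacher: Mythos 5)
Your proof is correct and is essentially the argument the paper relies on: the paper gives no proof of its own but points to (A.8)--(A.10) of \cite{burman2018Acut}, where the estimate is obtained exactly this way — normal (tubular) coordinates with two-sided Jacobian bounds guaranteed by the curvature condition on $\delta_0$, a fiber-wise fundamental-theorem-of-calculus plus Cauchy--Schwarz estimate along the normal direction, and integration over $\Gamma$. Your a.e.-fiber Fubini treatment of the trace is a legitimate substitute for the density step, so no gap remains.
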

\begin{proof}
See (A.8)-(A.10) in \cite{burman2018Acut}. 
\end{proof}
\begin{remark}
If $v\in H^1(U^\pm(\Gamma,\delta_0))$, then applying the global trace inequality to $\|v\|_{L^2(\Gamma)}$ on $U^\pm(\Gamma,\delta_0)$,  the inequality (\ref{strip}) becomes 
\begin{equation}\label{delta_est_yaun}
\|v\|^2_{L^2(U^\pm(\Gamma,\delta))}\leq C\delta  \| v\|^2_{H^1(U^\pm(\Gamma,\delta_0))},
\end{equation}
which was proved in \cite{Li2010Optimal,elliott2013finite}. Furthermore, if $v|_\Gamma=0$, the inequality (\ref{strip}) implies 
\begin{equation}\label{delta_est_yaun2}
\|v\|^2_{L^2(U^\pm(\Gamma,\delta))}\leq C\delta^2  \|\nabla v\|^2_{L^2(U^\pm(\Gamma,\delta))},
\end{equation}
which was also proved in \cite{Li2010Optimal}. We note that the constant $C$ depends on $\delta_0$, but not on $\delta$.
\end{remark}

\section{Immersed finite elements}\label{sec_IFE}
In this section we first introduce unfitted meshes, the discrete interface,  and the assumptions and notations. Then we present the immersed Crouzeix-Raviart finite element in 2D and 3D.
\subsection{Unfitted meshes}
 Let $ \{\mathcal{T}_h\}_{h>0}$ be a family of simplicial triangulations of the domain $\Omega$, generated independently of the interface $\Gamma$. For an element $T\in\mathcal{T}_h$ (a triangle for $N=2$ and  a tetrahedron for $N=3$),  $h_T$ denotes its diameter, and for a mesh $\mathcal{T}_h$, the index $h$ refers to the maximal diameter of all elements in $\mathcal{T}_h$, i.e., $h=\max_{T\in\mathcal{T}_h}h_T$. We assume that $\mathcal{T}_h$ is shape-regular, i.e., for every $T\in\mathcal{T}_h$, there exists a positive constant $\varrho$ such that  $ h_T\leq \varrho r_T$ where $r_T$ is the radius of the largest ball inscribed in $T$.  
In this paper, face means edge/face in two/three dimensions.
Denote $\mathcal{F}_h$  as the set of faces of the triangulation $\mathcal{T}_h$, and let $\mathcal{F}^\circ_h$ and $\mathcal{F}^b_h$  be the sets of interior faces and boundary faces.  
We adopt the convention that  elements  and  faces are open sets. 
Then the sets of interface elements and interface faces are defined as
\begin{equation*}
\mathcal{T}_h^\Gamma =\{T\in\mathcal{T}_h :  T\cap \Gamma\not = \emptyset\} \quad\mbox{ and }\quad \mathcal{F}_h^\Gamma=\{F\in \mathcal{F}_h : F \cap \Gamma\not = \emptyset\}.
\end{equation*}
The sets of non-interface elements and non-interface faces are $\mathcal{T}^{non}_h=\mathcal{T}_h\backslash\mathcal{T}_h^{\Gamma}$ and $\mathcal{F}^{non}_h=\mathcal{F}_h\backslash\mathcal{F}_h^{\Gamma}$, respectively.

Define $h_\Gamma=\max_{T\in\mathcal{T}_h^\Gamma}h_T.$  Our method and analysis will be valid when $h_\Gamma$ is sufficiently small so that the interface is  resolved by the unfitted mesh in the sense that  the following assumptions are satisfied.
\begin{assumption}\label{assum_2}
We can always refine the mesh near the interface to satisfy:
\begin{itemize}
  \item $h_\Gamma<\delta_0$ so that $\overline{T}\subset U(\Gamma,\delta_0)$ for all $T\in\mathcal{T}_h^\Gamma$.
  \item For any triangle belonging  to $\mathcal{T}^\Gamma_h$ for $N=2$, or belonging to $\mathcal{F}^\Gamma_h$ for $N=3$,   the interface $\Gamma$  must intersect the boundary of the triangle at two points,  and these two points cannot be on the same edge (including two endpoints) of the triangle.
\end{itemize}
\end{assumption}

 \begin{figure} [htbp]
\centering
\subfigure[2D]{\label{fig_inte_2d}
\includegraphics[width=0.25\textwidth]{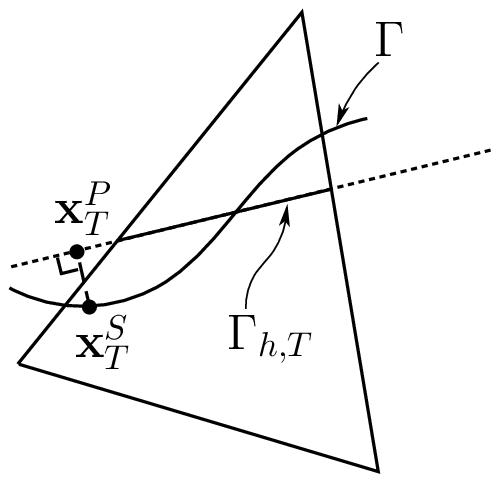}}\quad~
\subfigure[3D: Type I ]{\label{fig_inte_3d1}
\includegraphics[width=0.25\textwidth]{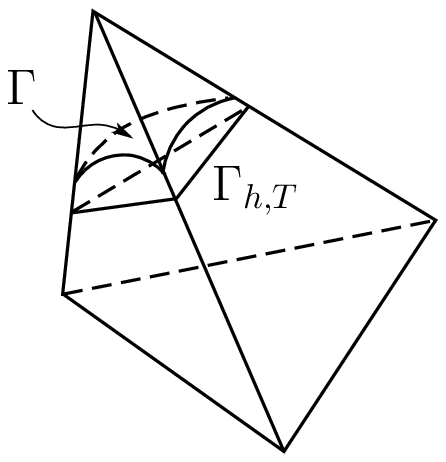}}\quad~
\subfigure[3D: Type II]{\label{fig_inte_3d2}
\includegraphics[width=0.23\textwidth]{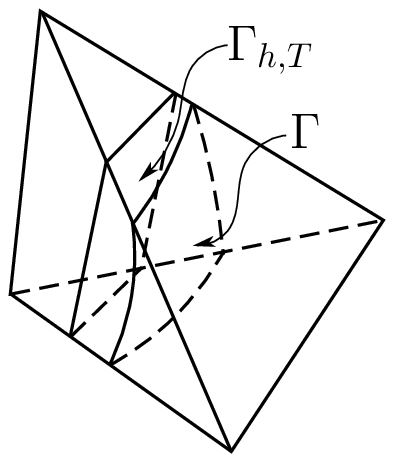}}\quad~
 \caption{Intersection topologies of interface elements\label{fig_inter}} 
\end{figure}

Based on the above assumption, we now investigate the possible intersection topologies of interface elements. For $N=2$, there is only one type of the interface elements (see Figure~\ref{fig_inte_2d}).
However, for $N=3$,  we have two types of the interface elements as shown by Type I (Three-edge cut) in Figure~\ref{fig_inte_3d1} and by Type II (Four-edge cut) in Figure~\ref{fig_inte_3d2}. 

 Note that the case that  the interface intersects an  interface element at  some vertices is also taken into account in this classification by viewing it as the limit situation of one of these types. We also note that the case that some faces are part of the interface or all vertices of some faces are on the interface can be easily treated as body-fitted meshes, so we do not consider this case in this paper for  simplicity of presentation.

\subsection{Discretization of the interface}
Let  us  denote the discrete interface by $\Gamma_h$, which partitions $\Omega$ into two subdomains $\Omega_h^+$ and $\Omega_h^-$ with $\partial \Omega_h^-=\Gamma_h$.  Define $\Gamma_{h,T}:=\Gamma_h \cap T$ and $\Gamma_{T}:=\Gamma \cap T$ for all $T\in\mathcal{T}_h^\Gamma$. We make the following abstract assumptions.
\begin{assumption}\label{ass_Gamma_h}
The discrete interface $\Gamma_h$ is chosen such that

\begin{itemize}
  \item  The discrete interface $\Gamma_h$ is  $C^0$-smooth and is composed of $\Gamma_{h,T}\subset \mathbb{R}^{N-1}$ for all  interface element $T\in\mathcal{T}_h^\Gamma$, i.e., $\Gamma_{h,T}$ is a line segment for $N=2$ and a planar segment for $N=3$ (see, e.g.,  Figure~\ref{fig_inter}). 
  \item The closest point mapping  $\mathbf{p}|_{\Gamma_h}: \Gamma_h\rightarrow \Gamma $ is a bijection. 
  \item There is a positive constant $C$ independent of $h$ and the interface location relative to the mesh such that for all $T\in\mathcal{T}_h^\Gamma$,
\begin{align}
\|d(\mathbf{x})\|_{L^\infty(\Gamma_{h,T})}\leq Ch_T^2,\label{ass_Gamma_h_1}\\
\|\mbox{dist}(\mathbf{x},\Gamma_{h,T}^{ext})\|_{L^\infty(\Gamma_T)}\leq Ch_T^2,\label{ass_Gamma_h_2}\\
\|\mathbf{n}-\mathbf{n}_{h}\|_{L^\infty(\Gamma_{T})}\leq Ch_T,\label{ass_Gamma_h_3}
\end{align}
where $\Gamma^{ext}_{h,T}$ is a $N-1$-dimensional hyperplane containing $\Gamma_{h,T}$ and $\mathbf{n}_{h}$ is a piecewise constant vector defined on interface elements with $\mathbf{n}_{h}|_T$ being the unit vector perpendicular to $\Gamma_{h,T}$  pointing from $\Omega^-_h$ to $\Omega^+_h$.
\end{itemize}
\end{assumption}

 \begin{figure} [htbp]
\centering
\includegraphics[width=0.5\textwidth]{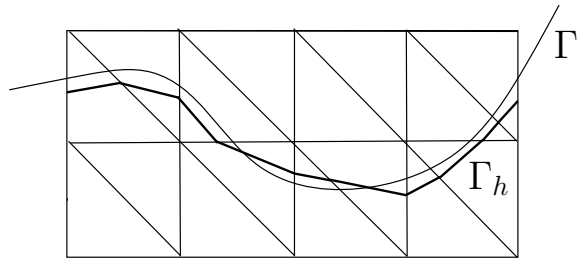}
 \caption{An illustration of $\Gamma_h$ for the 2D case\label{fig_gamma}} 
\end{figure}

We emphasize that the hyperplane $\Gamma^{ext}_{h,T}$ plays an important role in the analysis of IFE methods. In the construction of IFE spaces, one often uses $v^+-v^-=0$ on $\Gamma_{h,T}$ to enforce the continuity, where $v^\pm$ are linear functions. This implies $v^+-v^-=0$ on $\Gamma^{ext}_{h,T}$. The latter is more beneficial for analysis. See Remarks \ref{remark_th} and \ref{remark_xtxp} for details.

In Figure~\ref{fig_gamma}, we illustrate an example of this discrete interface $\Gamma_h$  for the two-dimensional case.  Here we do not investigate whether (\ref{ass_Gamma_h_1}) and (\ref{ass_Gamma_h_2}) are independent or not because they can be easily verified in practical applications. Under these assumptions, we  now derive some relations that will be useful in our analysis.
Using the signed distance function $d(\mathbf{x})$,  we have $\mathbf{n}(\mathbf{x})=\nabla d(\mathbf{x})$, which is well-defined in $U(\Gamma,\delta_0)$. As we assume that $\Gamma\in C^2$, it holds $d(\mathbf{x})\in C^2\left(U(\Gamma,\delta_0)\right)$ (see \cite{foote1984regularity}), and hence  $\mathbf{n}(\mathbf{x})\in C^1\left( U(\Gamma,\delta_0) \right)^N$. Therefore, the inequality (\ref{ass_Gamma_h_3}) in Assumption~\ref{ass_Gamma_h} implies
\begin{equation}\label{n_h_esti}
\begin{aligned}
 \|\mathbf{n}-\mathbf{n}_{h}\|_{L^\infty(T)}&=|\mathbf{n}(\mathbf{x})-\mathbf{n}_{h}(\mathbf{x}_\Gamma)|\\
 &\leq |\mathbf{n}(\mathbf{x})-\mathbf{n}(\mathbf{x}_\Gamma)|+|\mathbf{n}(\mathbf{x}_\Gamma)-\mathbf{n}_{h}(\mathbf{x}_\Gamma)|\\
&\leq C|\mathbf{x}-\mathbf{x}_\Gamma |+  \|\mathbf{n}-\mathbf{n}_{h}\|_{L^\infty(\Gamma_{T})}\\
&\leq Ch_T,
 \end{aligned}
\end{equation}
where  $\mathbf{x}\in \overline{T}$, $\mathbf{x}_\Gamma\in \overline{\Gamma_T}$, and $|\cdot|$ stands for the 2-norm of a vector.  In addition, the inequality (\ref{ass_Gamma_h_1}) in Assumption~\ref{ass_Gamma_h}  implies that there exists a constant $C_\Gamma$ independent of $h$ and the interface location relative to the mesh such that
\begin{equation*}
\Gamma_h\subset U(\Gamma,C_\Gamma h_\Gamma^2).
\end{equation*} 
The mismatch region caused by the discretization of the interface is defined by $\Omega^\triangle:=(\Omega_h^-\cap\Omega^+)\cup(\Omega_h^+\cap\Omega^-).$ Also define  $T_h^\pm:=T\cap \Omega_h^\pm$ and $T^\triangle:=(T_h^-\cap T^+)\cup(T_h^+\cap T^-)$ for all $T\in\mathcal{T}_h^\Gamma$.
Obviously, we have
\begin{equation}\label{tdelta_U}
\Omega^\triangle=\bigcup_{T\in\mathcal{T}_h^\Gamma} T^\triangle~\mbox{ and }~ \Omega^\triangle \subset U(\Gamma,C_\Gamma h_\Gamma^2).
\end{equation}
The inequality (\ref{ass_Gamma_h_2}) in Assumption~\ref{ass_Gamma_h}  is used to derive (\ref{tpts}), which is useful in the analysis (see Remark~\ref{remark_xtxp}).

Now  we give an example of the discrete interface $\Gamma_h$ that fulfills  Assumption~\ref{ass_Gamma_h}. Let $I_h$ be the piecewise linear nodal interpolation operator associated with $\mathcal{T}_h$. The discrete interface can be chosen as the zero level set of the Lagrange interpolant of $d(\mathbf{x})$, i.e.,
\begin{equation*}
\Gamma_h:=\{\mathbf{x}\in\mathbb{R}^N : I_hd(\mathbf{x})=0\}.
\end{equation*}
This choice of $\Gamma_h$ is often used in the CutFEM for solving PDEs on surfaces (see, e.g., \cite{burman2017cut}).  The first two properties in Assumption~\ref{ass_Gamma_h} are obviously satisfied. It suffices to verify (\ref{ass_Gamma_h_1})-(\ref{ass_Gamma_h_3}). Since $d(\mathbf{x})\in C^2\left(U(\Gamma,\delta_0)\right)$, we have for all $T\in\mathcal{T}_h^\Gamma$ that
\begin{equation}\label{Ihdpro}
\|d-I_hd\|_{L^\infty(T)}+h_T\|\nabla d-\nabla I_hd\|_{L^\infty(T)}\leq Ch_T^2,
\end{equation}
which together with the facts 
$$\mathbf{n}=\nabla d, ~~|\nabla d|=1, ~~\mathbf{n}_h=|\nabla I_hd|^{-1}\nabla I_hd, ~~\left||\nabla I_hd|^{-1}I_hd\right|=\mbox{dist}(\mathbf{x},\Gamma_{h,T}^{ext})\leq Ch_T~~\forall \mathbf{x}\in T,$$ 
leads to
\begin{equation*}
\begin{aligned}
&\left||\nabla I_hd|-1\right|\leq | \nabla I_hd -\nabla d|\leq Ch_T,\\
&\|d\|_{L^\infty(\Gamma_{h,T})}=\|d-I_hd\|_{L^\infty(\Gamma_{h,T})}\leq \|d-I_hd\|_{L^\infty(T)}\leq Ch_T^2,\\
&\|\mathbf{n}-\mathbf{n}_h\|_{L^\infty(\Gamma_T)}=\left\|\nabla d - |\nabla I_hd|^{-1}\nabla I_hd\right\|_{L^\infty(\Gamma_T)}\\
&~~~\qquad\qquad\qquad\leq \left\|\nabla d - \nabla I_hd\right\|_{L^\infty(\Gamma_T)}+\left\|\nabla I_hd - |\nabla I_hd|^{-1}\nabla I_hd\right\|_{L^\infty(\Gamma_T)}\\
&~~~\qquad\qquad\qquad= \left\|\nabla d - \nabla I_hd\right\|_{L^\infty(\Gamma_T)}+\left||\nabla I_hd|-1\right|\\
&~~~\qquad\qquad\qquad\leq Ch_T,\\
& \|\mbox{dist}(\mathbf{x},\Gamma_{h,T}^{ext})\|_{L^\infty(\Gamma_T)}= \left\||\nabla I_hd|^{-1}I_hd\right\|_{L^\infty(\Gamma_T)}\\
&~~~~~~~~~~\qquad\qquad\qquad=\left\||\nabla I_hd|^{-1}I_hd-d\right\|_{L^\infty(\Gamma_T)}\\
&~~~~~~~~~~\qquad\qquad\qquad\leq \left\||\nabla I_hd|^{-1}I_hd-I_hd\right\|_{L^\infty(\Gamma_T)}+\left\|I_hd-d\right\|_{L^\infty(\Gamma_T)}\\
&~~~~~~~~~~\qquad\qquad\qquad= \left|1-|\nabla I_hd|\right|  \left\|   |\nabla I_hd|^{-1} I_hd\right\|_{L^\infty(\Gamma_T)}+\left\|I_hd-d\right\|_{L^\infty(\Gamma_T)}\\
&~~~~~~~~~~\qquad\qquad\qquad\leq Ch_T^2.
\end{aligned}
\end{equation*}
This completes the verification of (\ref{ass_Gamma_h_1})-(\ref{ass_Gamma_h_3}).

In many practical applications, the exact interface is unknown, and only a discrete level set function $d_h(\mathbf{x})$ is available which is often obtained by solving the related PDEs for the interface together with some redistancing procedures (see, e.g., \cite{elsey2014fast}).  The discrete interface is then chosen as the zero level set of $d_h(\mathbf{x})$.  For this point of view,  the IFE method developed in this paper is particularly well suited.

We also note that $d_h(\mathbf{x})$ and the corresponding $\Gamma_h$ can also be obtained if the exact interface is given by a parametric representation because there exist algorithms to compute the closet point projection based on the parametric representation of the exact interface (see, e.g., \cite{Rangarajan2011}).

\subsection{Extensions and notation}
For any $\delta\in (0,\delta_0]$, define $\Omega^\pm_{\delta}:=\Omega^\pm\cup U(\Gamma,\delta)$.
It is well known that there exist  extension operators $E^\pm$: $H^m(\Omega^\pm)\rightarrow H^m(\Omega^\pm_{\delta_0})$ for any $m\geq0$ such that
\begin{equation}\label{extension}
(E^\pm v^\pm)|_{\Omega^\pm}=v^\pm~\mbox{and}~\|E^\pm v\|_{H^m(\Omega^\pm_{\delta_0})}\leq C\|v^\pm\|_{H^m(\Omega^\pm)}~\mbox{for all}~ v^\pm\in H^m(\Omega^\pm), 
\end{equation}
where the constant $C$ depends on $\Omega^\pm$ (see \cite{Gilbargbook}). For brevity we shall use the notation $v_E^+$  and $v_E^-$ for the extended functions $E^+ v^+$ and $E^- v^-$, i.e., $v_E^\pm:=E^\pm v^\pm$. 

For the discontinuous coefficients, since $\beta^\pm(\mathbf{x})\in C^1(\overline{\Omega^\pm})$, we can further assume that the extensions also satisfy 
\begin{equation}\label{beta_ext1}
\beta_E^\pm(\mathbf{x})\in C^1(\overline{\Omega^\pm_{\delta_0}})\quad\mbox{ and }\quad\tilde{\beta}^{\pm}_{m}\leq \beta_E^\pm(\mathbf{x})\leq\tilde{\beta}^\pm_{M}\quad\forall \mathbf{x}\in \overline{\Omega^\pm_{\delta_0}},
\end{equation}
where the constants $\tilde{\beta}^\pm_{m}$ and $\tilde{\beta}^\pm_{M}$ are positive and depend on $\delta_0$ and $\beta^\pm$. Thus, there exists a constant $C_\beta\geq 0$ depending on $\beta_E^\pm$ such that
\begin{equation}\label{beta_ext2}
\|\nabla \beta^\pm_E\|_{L^\infty(\Omega^\pm_{\delta_0})}\leq C_{\beta}.
\end{equation}
Note that if $\beta$ is a piecewise constant, the constant $C_\beta=0$.

We now consider the extension of polynomials.
Let $\mathbb{P}_k(D)$ be the set of all polynomials of degree less than or equal to $k$ on the domain $D$. 
Given a function $v\in L^2(T)$ with $v|_{T_h^\pm}\in \mathbb{P}_k(T_h^\pm)$, with a small ambiguity of notation,  we use  $v^\pm$  to represent the polynomial extension of $v|_{T_h^\pm}$, i.e.,
\begin{equation*}
v^\pm\in  \mathbb{P}_k(T)~~\mbox{ and } ~~ v^\pm|_{T_h^\pm}=v|_{T_h^\pm}.
\end{equation*}
We note that the superscripts $+$ and $-$ are also used for the restrictions of a function $v\in L^2(\Omega)$ on $\Omega^\pm$,  i.e., $v^\pm:=v|_{\Omega^\pm}$.  This abuse of notation will not cause any confusion in the analysis but simplifies the notation greatly. The reason is that we often use the extensions $v_E^+$ and $v_E^-$ when $v^\pm$ means $v|_{\Omega^\pm}$.

Given a bounded domain $D$, for any $v^\pm \in L^2(D)$, we define
\begin{equation*}
[\![v^\pm]\!](\mathbf{x}):=v^+(\mathbf{x})-v^-(\mathbf{x}) \quad\forall \mathbf{x}\in D.
\end{equation*}
Therefore, for any $v\in H^1(\cup \Omega^\pm)$, we have
$[\![v_E^\pm]\!](\mathbf{x})=v_E^+(\mathbf{x})-v_E^-(\mathbf{x})$ for all $\mathbf{x}\in N(\Gamma,\delta_0),$
which can be viewed as an extension of the jump $[v]_\Gamma$.
Note that the difference between $[\![v_E^\pm]\!](\mathbf{x})$ and $[v]_{\Gamma}(\mathbf{x})$ is the range of $\mathbf{x}$. For vector-valued functions, the jumps $[\![\cdot]\!]$ and $[\cdot]_{\Gamma}$ are defined analogously.

Finally, we consider the extensions of the tangential gradients along the exact interface $\Gamma$ and the discrete interface $\Gamma_h$. Noting that $\mathbf{n}$ and $\mathbf{n}_h$ are well-defined in the neighborhood of $\Gamma$, for any $v\in H^1(U(\Gamma,\delta_0))$, these extensions are defined naturally as 
\begin{equation}\label{def_sur_grad}
\begin{aligned}
(\nabla_\Gamma v)(\mathbf{x})&:=\nabla v-(\mathbf{n}\cdot \nabla v)\mathbf{n}   \qquad &&\forall \mathbf{x}\in U(\Gamma,\delta_0),\\
(\nabla_{\Gamma_h} v)(\mathbf{x})&:=\nabla v-(\mathbf{n}_h \cdot \nabla v)\mathbf{n}_h\qquad &&\forall \mathbf{x}\in T,~ T\in\mathcal{T}_h^\Gamma.
\end{aligned}
\end{equation}
Let $\mathbf{t}_i(\mathbf{x})$, $i=1,...,N-1$ be  standard basis vectors in the plane perpendicular to $\mathbf{n}(\mathbf{x})$.
By definition, there hold
\begin{equation*}
\nabla_\Gamma v=\sum_{i=1}^{N-1}(\mathbf{t}_i\cdot \nabla v)\mathbf{t}_i \quad \mbox{ and }\quad\nabla_\Gamma v=\mathbf{0}  ~ \mbox{ if } v|_{\Gamma}=0.
\end{equation*}
Analogously, we have
\begin{equation}\label{sur_grad_h_ineq}
\nabla_{\Gamma_h} v=\sum_{i=1}^{N-1}(\mathbf{t}_{i,h}\cdot \nabla v)\mathbf{t}_{i,h}\quad \mbox{ and }\quad |\mathbf{t}_{h}\cdot \nabla v|\leq |\nabla_{\Gamma_h} v|,
\end{equation}
where $\mathbf{t}_{i,h}(\mathbf{x})$, $i=1,...,N-1$ and $\mathbf{n}_h(\mathbf{x})$ form standard basis vectors in $\mathbb{R}^N$ and $\mathbf{t}_{h}$ is an arbitrary unit vector perpendicular to $\mathbf{n}_h$.

\subsection{The immersed Crouzeix-Raviart finite element}
For each element $T\in\mathcal{T}_h$, we  define 
the linear functional $\mathcal{N}_{i,T}: W(T)\rightarrow \mathbb{R}$ by
\begin{equation}\label{def_Nit}
\mathcal{N}_{i,T}(v)=\frac{1}{|F_i|}\int_{F_i}v, 
\end{equation}
where $F_i$'s are faces of $T$, $|F_i|$ means the measure of $F_i$, and  
\begin{equation}\label{def_WT}
W(T)=\{v\in L^2(T) : v|_{F_i}\in L^2(F_i), ~ i=1,...,N+1 \}.
\end{equation}
The standard Crouzeix-Raviart finite element then is $( T,  \mathbb{P}_1(T),  \Sigma_T )$, where
\begin{equation}\label{standard_CR}
\Sigma_T=\{\mathcal{N}_{1,T}, \mathcal{N}_{2,T}, ..., \mathcal{N}_{N+1,T}\}.
\end{equation}

On an interface element $T\in\mathcal{T}_h^\Gamma$, in order to encode the interface jump conditions (\ref{p1.2})-(\ref{p1.3}) into finite element spaces, we replace  the shape function space $\mathbb{P}_1(T)$ by
\begin{equation}\label{def_Sh}
S_h(T):=\{\phi\in L^2(T) : \phi|_{T_h^\pm}\in  \mathbb{P}_1(T_h^\pm),~ [\phi]_{\Gamma_{h,T}}=0,~ [\beta_T\nabla \phi\cdot \mathbf{n}_h]_{\Gamma_{h,T}}=0 \},
\end{equation}
where $[\cdot]_{\Gamma_{h,T}}$ denotes the jump across $\Gamma_{h,T}$, and the function $\beta_T(\mathbf{x})$ is a piecewise constant on $T$ defined by $\beta_T|_{T_h^\pm}=\beta^\pm_T$ with the constants $\beta_T^+$ and  $\beta_T^-$ chosen such that 
\begin{equation}\label{betaT}
\|\beta_E^\pm(\mathbf{x})-\beta_T^\pm\|_{L^\infty(T)}\leq Ch_T.
\end{equation}
Obviously, $S_h(T)$ is a linear space, and we have $\mbox{dim}(S_h(T))=N+1=\mbox{card}(\Sigma_T)$. Now the immersed Crouzeix-Raviart finite element  is defined as $( T,  S_h(T),  \Sigma_T )$.

\begin{remark}
We can choose $\beta_T^\pm=\beta^\pm(\mathbf{x}_c)$ with an arbitrary point $\mathbf{x}_c\in \overline{T}$ to satisfy the requirement (\ref{betaT}) since $\beta_E^\pm(\mathbf{x})\in C^1(\overline {T})$ for all $T\in\mathcal{T}_h^\Gamma$. We emphasize that this approximation of the coefficient $\beta(\mathbf{x})$  is only used in the construction of the IFE space, not in the bilinear form of the  IFE method. To avoid integrating on curved regions, we will approximate the coefficient $\beta(\mathbf{x})$ by another function, i.e., $\beta^{BK}(\mathbf{x})$ (see Section~\ref{sec_method}).  
\end{remark}

\begin{remark}\label{remark_th}
Let $\mathbf{x}_T^P$  be an arbitrary point on the plane $\Gamma_{h,T}^{ext}$, and $\mathbf{t}_{i,h}$, $i=1,...,N-1$ be standard basis vectors in the plane perpendicular to $\mathbf{n}_h$. Then the interface condition $[\phi]_{\Gamma_{h,T}}=0$ in (\ref{def_Sh}) is  equivalent to 
\begin{equation*}
[\![\phi^\pm]\!](\mathbf{x}_T^P)=0 ~\mbox{ and }~ [\![\nabla \phi^\pm\cdot\mathbf{t}_{i,h}]\!]=0,~i=1,...,N-1.
\end{equation*}
\end{remark}

\section{Properties of the immersed finite element}\label{sec_pro}
To show that $( T,  S_h(T),  \Sigma_T )$ is indeed a finite element, we need to prove that $\Sigma_T$ determines $S_h(T)$, i.e.,  $\phi\in S_h(T)$ with $\mathcal{N}_{i,T}(\phi)=0$  $\forall \mathcal{N}_{i,T}\in \Sigma_T$ implies that $\phi=0$; see Chapter 3 in \cite{brenner2008mathematical}. Equivalently, in the next subsection we prove the existence and uniqueness of  the IFE basis functions defined by
\begin{equation}\label{def_phi}
\phi_{i,T}(\mathbf{x})\in S_h(T),~\mathcal{N}_{j,T}(\phi_{i,T})=\delta_{ij}  \mbox{ (the Kronecker symbol)}\quad \forall i,j=1,..., N+1.
\end{equation}

\subsection{Unisolvence of the basis functions}\label{subsec_uniso}
Clearly, the IFE shape function space $S_h(T)$ is not empty since $0\in S_h(T)$. Given a function $\phi\in S_h(T)$,
if we know the jump
\begin{equation}\label{def_mu}
\alpha:=[\nabla \phi\cdot\mathbf{n}_h]_{\Gamma_{h,T}},
\end{equation}
which is a constant, 
then the function $\phi$ can be written as 
\begin{equation}\label{phi_decomp}
\phi(\mathbf{x})=\phi_0(\mathbf{x})+\alpha\phi_J(\mathbf{x}),
\end{equation}
where $\phi_0(\mathbf{x})$ and $\phi_J(\mathbf{x})$ are defined by 
\begin{align}
&\phi_0|_{T_h^\pm}\in \mathbb{P}_1(T_h^\pm), ~[\phi_0]_{\Gamma_{h,T}}=0,~ [\nabla \phi_0\cdot\mathbf{n}_h]_{\Gamma_{h,T}}=0,~ \mathcal{N}_{i,T}(\phi_0)=\mathcal{N}_{i,T}(\phi),~ i=1,...,N+1,\label{def_phi0}\\
&\phi_J|_{T_h^\pm}\in \mathbb{P}_1(T_h^\pm), ~[\phi_J]_{\Gamma_{h,T}}=0, ~[\nabla \phi_J\cdot\mathbf{n}_h]_{\Gamma_{h,T}}=1, ~\mathcal{N}_{i,T}(\phi_J)=0, ~ i=1,...,N+1.\label{def_phiJ}
\end{align}
It is easy to check that 
\begin{equation}\label{phi0_pro}
\phi_0(\mathbf{x})\in\mathbb{P}_1(T)~~\mbox{and}~~\phi_0(\mathbf{x})=\sum_{i=1}^{N+1}\mathcal{N}_{i,T}(\phi)\lambda_{i,T}(\mathbf{x}),
\end{equation}
where $\lambda_{i,T}(\mathbf{x})$ is the standard Crouzeix-Raviart  basis function defined by
\begin{equation}\label{def_lambda}
\lambda_{i,T}(\mathbf{x})\in\mathbb{P}_1(T),~\mathcal{N}_{j,T}(\lambda_{i,T})=\delta_{ij},~ j=1,..., N+1.
\end{equation}
Next, we show $\phi_J(\mathbf{x})$ also exists uniquely and can be constructed explicitly. Suppose there is another function satisfying (\ref{def_phiJ}), denoted by $\tilde{\phi}_J$, then it is easy to see from (\ref{def_phiJ}) that $\phi_J-\tilde{\phi}_J=0$, which implies the uniqueness. 
Let $\Pi_T: W(T)\rightarrow \mathbb{P}_1(T)$ be the standard Crouzeix-Raviart interpolation operator defined by
\begin{equation}\label{def_inter}
\mathcal{N}_{i,T}(\Pi_T v)=\mathcal{N}_{i,T}(v),~~i=1,...,N+1.
\end{equation}
The existence can be proved by constructing the function explicitly as
\begin{equation}\label{phiJ_pro}
\phi_J(\mathbf{x})=w(\mathbf{x})-\Pi_Tw(\mathbf{x})\quad\mbox{ with }\quad w|_{T_h^+}=d_{\Gamma^{ext}_{h,T}}~\mbox{ and }~w|_{T_h^-}=0,
\end{equation}
where $d_{\Gamma^{ext}_{h,T}}$ is the signed distance function  to the plane $\Gamma^{ext}_{h,T}$, i.e.,
\begin{equation*}
d_{\Gamma^{ext}_{h,T}}(\mathbf{x})=
\left\{
\begin{aligned}
&\mbox{dist}(\mathbf{x},\Gamma^{ext}_{h,T})\qquad&&\mbox{ if }\mathbf{x}\in \overline{T_h^+},\\
&-\mbox{dist}(\mathbf{x},\Gamma^{ext}_{h,T})&&\mbox{ if }\mathbf{x}\in T_h^-.
\end{aligned}
\right.
\end{equation*}
It is easy to verify that the constructed function above indeed satisfies (\ref{def_phiJ}).

Now the problem is to find the constant $\alpha$ defined in (\ref{def_mu}). Substituting (\ref{phi_decomp}) into  the jump condition $[\beta_T\nabla \phi\cdot \mathbf{n}_h]_{\Gamma_{h,T}}=0$ in (\ref{def_Sh}), we have 
\begin{equation*}
[\beta_T\nabla \phi_J\cdot \mathbf{n}_h]_{\Gamma_{h,T}}\alpha=-[\beta_T\nabla \phi_0\cdot \mathbf{n}_h]_{\Gamma_{h,T}}.
\end{equation*}
Using (\ref{phi0_pro}) and (\ref{phiJ_pro}), we arrive at 
\begin{equation}\label{eq_mu}
\left(1+(\beta^-_T/\beta^+_T-1)\nabla \Pi_T w\cdot \mathbf{n}_h\right)\alpha=(\beta^-_T/\beta^+_T-1)\nabla \phi_0\cdot \mathbf{n}_h.
\end{equation}

To show the existence and uniqueness of the constant $\alpha$, we prove the following novel result which is the key of this paper.
\begin{lemma}\label{lem_01}
Let $T\in\mathcal{T}_h^\Gamma$ be an arbitrary triangle ($N=2$) or tetrahedron ($N=3$),  and $w$ be a piecewise linear function defined in (\ref{phiJ_pro}). Then it holds 
\begin{equation}\label{est01}
 \nabla\Pi_T w\cdot \mathbf{n}_h=\frac{|T_h^+|}{|T|}\in [0,1],
\end{equation}
where $|\cdot|$ stands for the measure of domains (i.e., area for $N=2$ and volume for $N=3$).
\end{lemma}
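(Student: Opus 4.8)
The plan is to use the fact that $\Pi_T w\in\mathbb{P}_1(T)$ has constant gradient, which for Crouzeix--Raviart elements is recoverable from the face functionals by the divergence theorem, and then to evaluate those functionals from the explicit formula for $w$. First I would record the identity valid for every $p\in\mathbb{P}_1(T)$: since $\nabla p$ is constant, $|T|\,\nabla p=\int_T\nabla p\,d\mathbf{x}=\int_{\partial T}p\,\bm{\nu}\,ds=\sum_{i=1}^{N+1}|F_i|\,\mathcal{N}_{i,T}(p)\,\bm{\nu}_i$, where $\bm{\nu}_i$ is the outward unit normal of $T$ on $F_i$. Applying this to $p=\Pi_T w$ and invoking the definition of $\Pi_T$ (so that $\mathcal{N}_{i,T}(\Pi_T w)=\mathcal{N}_{i,T}(w)$) gives $|T|\,\nabla\Pi_T w=\sum_{i}\bm{\nu}_i\int_{F_i}w\,ds$.

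Next I would insert $w|_{T_h^+}=d_{\Gamma^{ext}_{h,T}}$ and $w|_{T_h^-}=0$, so that $\int_{F_i}w\,ds=\int_{F_i\cap T_h^+}d_{\Gamma^{ext}_{h,T}}\,ds$, and then apply the divergence theorem a second time, now on the convex polytope $T_h^+$ (which under Assumption~\ref{assum_2} is exactly the part of $T$ on the positive side of the plane $\Gamma^{ext}_{h,T}$), to the affine function $d_{\Gamma^{ext}_{h,T}}$ whose gradient is the constant unit vector $\mathbf{n}_h$. The boundary $\partial T_h^+$ splits into the pieces $F_i\cap T_h^+$, on which the outward normal of $T_h^+$ coincides with $\bm{\nu}_i$, and the piece $\Gamma_{h,T}\subset\Gamma^{ext}_{h,T}$, on which $d_{\Gamma^{ext}_{h,T}}$ vanishes and hence contributes nothing. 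Therefore $|T_h^+|\,\mathbf{n}_h=\int_{T_h^+}\nabla d_{\Gamma^{ext}_{h,T}}\,d\mathbf{x}=\sum_i\bm{\nu}_i\int_{F_i\cap T_h^+}d_{\Gamma^{ext}_{h,T}}\,ds=|T|\,\nabla\Pi_T w$. Taking the inner product with the unit vector $\mathbf{n}_h$ yields $\nabla\Pi_T w\cdot\mathbf{n}_h=|T_h^+|/|T|$, and the bound $0\le|T_h^+|\le|T|$ places this quantity in $[0,1]$; the degenerate cases $|T_h^+|\in\{0,|T|\}$ are covered by the same computation.

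The computation itself is short, so the only point that needs genuine care — and which I would flag as the main obstacle — is the orientation bookkeeping: verifying that $\nabla d_{\Gamma^{ext}_{h,T}}$ equals $+\mathbf{n}_h$ rather than $-\mathbf{n}_h$, which follows because $d_{\Gamma^{ext}_{h,T}}$ is positive on $T_h^+=T\cap\Omega_h^+$ and $\mathbf{n}_h$ is defined to point from $\Omega_h^-$ to $\Omega_h^+$, and correspondingly that the outward normal of $T_h^+$ along $\Gamma_{h,T}$ is $-\mathbf{n}_h$ while along $F_i\cap T_h^+$ it agrees with the outward normal $\bm{\nu}_i$ of the simplex $T$. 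Everything else is the standard gradient-recovery identity for the Crouzeix--Raviart interpolant together with one more application of the divergence theorem.
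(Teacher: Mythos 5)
Your proof is correct and takes essentially the same route as the paper: the paper applies the Gauss theorem to $w\mathbf{n}_h$ on $T_h^+$ and uses the identity $\nabla\lambda_{i,T}=\frac{N}{l_i}\boldsymbol{\nu}_i$ (equivalently $|T|\,\nabla\lambda_{i,T}=|F_i|\boldsymbol{\nu}_i$), which is exactly your Crouzeix--Raviart gradient-recovery identity obtained from the divergence theorem on $T$. The only difference is cosmetic and harmless: you keep the computation vector-valued and thereby obtain the slightly stronger identity $|T|\,\nabla\Pi_T w=|T_h^+|\,\mathbf{n}_h$ before projecting onto $\mathbf{n}_h$, with the orientation check $\nabla d_{\Gamma^{ext}_{h,T}}=\mathbf{n}_h$ handled correctly.
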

\begin{proof}
We give a unified proof for both $N=2$ and $N=3$ (including Type I and Type II interface elements) by using the Gauss theorem.  More precisely, by definition, we have
\begin{equation*}
\nabla\cdot(w\mathbf{n}_h)|_{T_h^+}=\nabla\cdot\left(\mbox{dist}(\mathbf{x},\Gamma_{h,T}^{ext})\mathbf{n}_h\right)=1.
\end{equation*}
Then the Gauss theorem gives  
\begin{equation*}
\int_{\partial T_h^+}w\mathbf{n}_h\cdot\boldsymbol{\nu}=\int_{T_h^+}\nabla\cdot (w\mathbf{n}_h)=|T_h^+|,
\end{equation*}
where $\boldsymbol{\nu}$ is the unit exterior normal vector to  $\partial T_h^+$.
Observing that $w=0$ on $\Gamma_{h,T}$ and $\partial T_h^+$ is composed of $\Gamma_{h,T}$ and $F_i\cap \partial T_h^+$, $i=1,...,N+1$, we obtain 
\begin{equation*}
\begin{aligned}
\sum_{i=1}^{N+1}\int_{F_i\cap \partial T_h^+ }w\mathbf{n}_h\cdot\boldsymbol{\nu}_i=\int_{\partial T_h^+}w\mathbf{n}_h\cdot\boldsymbol{\nu}=|T_h^+|,
\end{aligned}
\end{equation*}
where  $F_i$, $i=1,...,N+1$ are the faces of $T$ and $\boldsymbol{\nu}_i$ is the unit exterior normal vector to $F_i$.

On the other hand, let $l_i$ be the distance from the face $F_i$ to the opposite vertex of $T$, then by a simple calculation we have the following identity for the standard Crouzeix-Raviart basis function 
\begin{equation*}
\nabla \lambda_{i,T}=\frac{N}{l_i}\boldsymbol{\nu}_i.
\end{equation*}
Using the above two identities we can derive 
\begin{equation*}
\begin{aligned}
\nabla\Pi_Tw\cdot\mathbf{n}_h&
=\sum_{i=1}^{N+1}\mathcal{N}_{i,T}(w)\nabla \lambda_{i,T}\cdot\mathbf{n}_h\\
&=\sum_{i=1}^{N+1}\frac{1}{|F_i|}\left(\int_{F_i\cap \partial T_h^+ }w\right) \frac{N}{l_i}\boldsymbol{\nu}_i\cdot\mathbf{n}_h\\
&=\sum_{i=1}^{N+1}\frac{1}{|T|}\int_{F_i\cap \partial T_h^+ }w\boldsymbol{\nu}_i\cdot\mathbf{n}_h=\frac{|T_h^+|}{|T|},
\end{aligned}
\end{equation*}
which completes the proof of this lemma.
\end{proof}

\begin{theorem}\label{theo_basis}
For any $T\in\mathcal{T}_h^\Gamma$, the IFE basis functions defined in (\ref{def_phi}) exist uniquely and have the following explicit formula  
\begin{equation}\label{express_IFE_basis}
\phi_{i,T}(\mathbf{x})=\lambda_{i,T}(\mathbf{x})+\frac{(\beta^-_T/\beta^+_T-1)\nabla \lambda_{i,T} \cdot \mathbf{n}_h}{1+(\beta^-_T/\beta^+_T-1)|T_h^+|/|T|}(w(\mathbf{x})-\Pi_Tw(\mathbf{x})),\qquad i=1,...,N+1,
\end{equation}
where $\lambda_{i,T}$ is the standard Crouzeix-Raviart  basis function defined in (\ref{def_lambda}), the function $w$ is a piecewise linear function defined in (\ref{phiJ_pro}), and $\Pi_T$ is the standard Crouzeix-Raviart interpolation operator defined in (\ref{def_inter}).
\end{theorem}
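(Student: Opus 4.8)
The plan is to reduce the construction of each $\phi_{i,T}$ to finding the single scalar $\alpha$ in equation \eqn{eq_mu}, and then to invoke Lemma~\ref{lem_01} to show that the coefficient of $\alpha$ never vanishes. Concretely, I would first recall the decomposition \eqn{phi_decomp}: any candidate basis function $\phi_{i,T}\in S_h(T)$ with $\mathcal{N}_{j,T}(\phi_{i,T})=\delta_{ij}$ must have the form $\phi_{i,T}=\phi_0+\alpha\phi_J$, where $\phi_0$ is forced by the degrees of freedom to equal $\lambda_{i,T}$ (by \eqn{phi0_pro} with $\mathcal{N}_{j,T}(\phi)=\delta_{ij}$), and $\phi_J=w-\Pi_Tw$ is the unique function satisfying \eqn{def_phiJ} (existence and uniqueness already established in the text). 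Thus the only freedom left is the constant $\alpha=[\nabla\phi_{i,T}\cdot\mathbf{n}_h]_{\Gamma_{h,T}}$, and imposing the remaining jump condition $[\beta_T\nabla\phi_{i,T}\cdot\mathbf{n}_h]_{\Gamma_{h,T}}=0$ yields precisely \eqn{eq_mu} with $\phi_0=\lambda_{i,T}$ on the right-hand side.

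Next I would solve \eqn{eq_mu} for $\alpha$. By Lemma~\ref{lem_01}, $\nabla\Pi_Tw\cdot\mathbf{n}_h=|T_h^+|/|T|\in[0,1]$, so the coefficient multiplying $\alpha$ is
\begin{equation*}
1+(\beta^-_T/\beta^+_T-1)\,\frac{|T_h^+|}{|T|}.
\end{equation*}
Since $|T_h^+|/|T|\in[0,1]$ and $\beta^-_T/\beta^+_T>0$ (both $\beta^\pm_T$ are positive by \eqn{betaT} and the uniform bounds on $\beta^\pm$), this quantity is a convex combination of $1$ and $\beta^-_T/\beta^+_T$, hence strictly positive — indeed bounded below by $\min\{1,\beta^-_T/\beta^+_T\}>0$. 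Therefore $\alpha$ is uniquely determined:
\begin{equation*}
\alpha=\frac{(\beta^-_T/\beta^+_T-1)\,\nabla\lambda_{i,T}\cdot\mathbf{n}_h}{1+(\beta^-_T/\beta^+_T-1)|T_h^+|/|T|}.
\end{equation*}
Substituting this $\alpha$ and $\phi_J=w-\Pi_Tw$ back into $\phi_{i,T}=\lambda_{i,T}+\alpha\phi_J$ gives exactly the claimed formula \eqn{express_IFE_basis}. Existence follows because this formula does define an element of $S_h(T)$ with the right degrees of freedom (it satisfies all three conditions in \eqn{def_Sh} and $\mathcal{N}_{j,T}(\phi_{i,T})=\mathcal{N}_{j,T}(\lambda_{i,T})=\delta_{ij}$ since $\mathcal{N}_{j,T}(\phi_J)=0$), and uniqueness follows because every step in the derivation was forced.

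The main obstacle is already absorbed into Lemma~\ref{lem_01}: without the identity $\nabla\Pi_Tw\cdot\mathbf{n}_h=|T_h^+|/|T|$ and the crucial fact that this ratio lies in $[0,1]$, one could not guarantee non-vanishing of the denominator for arbitrary triangles/tetrahedra and arbitrary interface location, which is exactly the ``unisolvence without angle restrictions'' claim. So in the write-up the real work is simply to assemble the pieces cleanly: state the decomposition, note $\phi_0=\lambda_{i,T}$, derive \eqn{eq_mu}, apply Lemma~\ref{lem_01} to see the denominator is a convex combination of $1$ and $\beta^-_T/\beta^+_T$ and hence positive, solve for $\alpha$, and verify that the resulting $\phi_{i,T}$ indeed satisfies \eqn{def_phi}. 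A brief remark confirming that $\mathcal{N}_{j,T}(\phi_{i,T})=\delta_{ij}$ uses $\mathcal{N}_{j,T}(w-\Pi_Tw)=0$, which is immediate from \eqn{def_inter}.
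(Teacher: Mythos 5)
Your proposal is correct and follows essentially the same route as the paper: decompose via \eqref{phi_decomp}, identify $\phi_0=\lambda_{i,T}$ and $\phi_J=w-\Pi_Tw$, use Lemma~\ref{lem_01} to bound the denominator away from zero (the paper records this as \eqref{pro_basi_fenmu}, equivalent to your convex-combination observation), solve \eqref{eq_mu} for $\alpha$, and substitute back. The only cosmetic difference is that the paper first writes the general representation \eqref{pro_rela_ife_cr} for an arbitrary $\phi\in S_h(T)$ and then specializes to the basis functions, while you specialize from the start.
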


\begin{proof}
Using Lemma~\ref{lem_01} we have
\begin{equation}\label{pro_basi_fenmu}
1+(\beta^-_T/\beta^+_T-1)\nabla\Pi_Tw\cdot \mathbf{n}_h\geq\left\{
\begin{aligned}
&1\qquad&&\mbox{ if }\beta^-_T/\beta^+_T\geq 1,\\
&\beta^-_T/\beta^+_T \qquad&&\mbox{ if }0<\beta^-_T/\beta^+_T< 1,
\end{aligned}\right.
\end{equation}
which implies that the equation (\ref{eq_mu})  has a unique solution
\begin{equation*}
\alpha= \frac{(\beta^-_T/\beta^+_T-1)\nabla\phi_0 \cdot \mathbf{n}_h}{1+(\beta^-_T/\beta^+_T-1)|T_h^+|/|T|}.
\end{equation*}
Substituting this identity, (\ref{phi0_pro}) and (\ref{phiJ_pro})  into (\ref{phi_decomp}) yields 
\begin{equation}\label{pro_rela_ife_cr}
\phi(\mathbf{x})=\sum_{j=1}^{N+1}\mathcal{N}_{j,T}(\phi)\lambda_{j,T}(\mathbf{x})+\frac{(\beta^-_T/\beta^+_T-1)\sum_{j=1}^{N+1}\mathcal{N}_{j,T}(\phi)\nabla \lambda_{j,T} \cdot \mathbf{n}_h}{1+(\beta^-_T/\beta^+_T-1)|T_h^+|/|T|}(w(\mathbf{x})-\Pi_Tw(\mathbf{x})).
\end{equation}
The desired result (\ref{express_IFE_basis}) follows from the above identity and the definition (\ref{def_phi}).
\end{proof}

\begin{remark}
We  highlight  that  the denominator in  (\ref{express_IFE_basis}) does not approach zero  even if $|T_h^+|\rightarrow 0$ or $|T_h^-|\rightarrow 0$. We find $(w-\Pi_T w)|_T\rightarrow0$ as $|T^+_h|\rightarrow0$ or $|T^-_h|\rightarrow0$. Thus, from (\ref{express_IFE_basis})  we claim that the IFE basis functions tend to  the standard finite element  basis functions, i.e.,  $\phi_{i,T}\rightarrow \lambda_{i,T}$ as $|T^+_h|\rightarrow0$ or $|T^-_h|\rightarrow0$. On the other hand, it is easy to see that $\phi_{i,T}\rightarrow \lambda_{i,T}$ as $\beta_T^+-\beta_T^-\rightarrow0$.  This consistency of the IFE  with the standard finite element  is different from other unfitted mesh methods (see, e.g.,\cite{hansbo2002unfitted,2016High,burman2015cutfem}). This nice property of the IFE  is desirable for moving interface problems \cite{guo2021SIAM} and interface inverse problems \cite{guo2019fixed}.
\end{remark}

\subsection{Bounds for the basis functions}
It is obvious that the standard Crouzeix-Raviart  basis functions satisfy the following estimates 
\begin{equation}\label{esti_lambda}
|\lambda_{i,T}|_{W_\infty^m(T)}\leq Ch_T^{-m},\qquad i=1,...,N+1,~m=0,1,
\end{equation}
where the constant $C$ only depends on the shape regularity parameter $\varrho$.  In this subsection we show that the IFE basis functions also have similar bounds with a constant independent of the interface location relative to the mesh. This property  plays an important role in the theoretical analysis.
\begin{theorem}\label{lem_est_IFEbasis}
There exists a constant $C$, depending only on $\beta_T^\pm$ and the shape regularity parameter $\varrho$, such that for all $T\in\mathcal{T}_h^\Gamma$,
\begin{equation}\label{est_IFEbasis}
|\phi_{i,T}^\pm|_{W^m_\infty(T)}\leq Ch_T^{-m}, \quad i=1,..., N+1, ~m=0,1.
\end{equation}
\end{theorem}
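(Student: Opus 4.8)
The plan is to bound $\phi_{i,T}^\pm$ directly from the explicit formula \eqref{express_IFE_basis}, writing $\phi_{i,T}=\lambda_{i,T}+c_{i,T}(w-\Pi_Tw)$ with scalar coefficient
\[
c_{i,T}=\frac{(\beta^-_T/\beta^+_T-1)\nabla\lambda_{i,T}\cdot\mathbf{n}_h}{1+(\beta^-_T/\beta^+_T-1)|T_h^+|/|T|}.
\]
The polynomial extension $\phi_{i,T}^\pm$ is then $\lambda_{i,T}+c_{i,T}(w^\pm-\Pi_Tw)$, where $w^+$ is the affine function $d_{\Gamma^{ext}_{h,T}}$ on all of $T$ and $w^-\equiv0$. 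Since $\lambda_{i,T}$ already satisfies \eqref{esti_lambda}, it suffices to bound $|c_{i,T}|$ and the seminorms $|w^\pm-\Pi_Tw|_{W^m_\infty(T)}$. For the denominator, Lemma~\ref{lem_01} gives $\nabla\Pi_Tw\cdot\mathbf{n}_h=|T_h^+|/|T|\in[0,1]$, so the estimate \eqref{pro_basi_fenmu} shows the denominator is bounded below by $\min\{1,\beta^-_T/\beta^+_T\}>0$, a constant depending only on $\beta_T^\pm$; combined with $|\nabla\lambda_{i,T}\cdot\mathbf{n}_h|\le|\nabla\lambda_{i,T}|\le Ch_T^{-1}$ from \eqref{esti_lambda} and $|\mathbf{n}_h|=1$, this yields $|c_{i,T}|\le Ch_T^{-1}$ with $C=C(\beta_T^\pm,\varrho)$.

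Next I would estimate $w^+-\Pi_Tw$ (the case $w^-=0$ is trivial since then $\phi_{i,T}^-=\lambda_{i,T}$, so $m=0,1$ are immediate). Note $w^+=d_{\Gamma^{ext}_{h,T}}$ is affine with $|\nabla w^+|=1$, so $|w^+|_{W^1_\infty(T)}=1$ and $|w^+|_{W^0_\infty(T)}=\|d_{\Gamma^{ext}_{h,T}}\|_{L^\infty(T)}\le Ch_T$ (the distance from any point of $T$ to the hyperplane $\Gamma^{ext}_{h,T}$, which meets $T$, is at most $h_T$). For $\Pi_Tw$ one uses $\Pi_Tw=\sum_i\mathcal{N}_{i,T}(w)\lambda_{i,T}$ with $\mathcal{N}_{i,T}(w)=\frac{1}{|F_i|}\int_{F_i\cap\partial T_h^+}w^+$, so $|\mathcal{N}_{i,T}(w)|\le\|w^+\|_{L^\infty(T)}\le Ch_T$, and then \eqref{esti_lambda} gives $|\Pi_Tw|_{W^m_\infty(T)}\le C h_T\cdot h_T^{-m}=Ch_T^{1-m}$. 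Hence $|w^+-\Pi_Tw|_{W^m_\infty(T)}\le Ch_T^{1-m}$ for $m=0,1$. Multiplying by $|c_{i,T}|\le Ch_T^{-1}$ gives $|c_{i,T}(w^+-\Pi_Tw)|_{W^m_\infty(T)}\le Ch_T^{-m}$, and adding the $\lambda_{i,T}$ contribution from \eqref{esti_lambda} yields \eqref{est_IFEbasis}.

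The only delicate point is making sure every constant depends solely on $\beta_T^\pm$ and $\varrho$, and in particular is independent of how $\Gamma_h$ (equivalently $\Gamma^{ext}_{h,T}$) cuts $T$; this is exactly where Lemma~\ref{lem_01} is essential, since it is the uniform lower bound $1+(\beta^-_T/\beta^+_T-1)|T_h^+|/|T|\ge\min\{1,\beta^-_T/\beta^+_T\}$ that prevents the coefficient $c_{i,T}$ from blowing up as $|T_h^+|\to 0$ or $|T_h^+|\to|T|$. All the other bounds are location-independent because $w^+$ is globally affine on $T$ with unit gradient and the functionals $\mathcal{N}_{i,T}$ integrate over subsets of the fixed faces $F_i$. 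I do not expect any genuine obstacle beyond bookkeeping; the substantive work has already been done in Lemma~\ref{lem_01} and Theorem~\ref{theo_basis}.
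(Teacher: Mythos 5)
Your proposal follows essentially the same route as the paper's proof: peel off the $\lambda_{i,T}$ part of the explicit formula (\ref{express_IFE_basis}), control the scalar coefficient through the lower bound (\ref{pro_basi_fenmu}) supplied by Lemma~\ref{lem_01}, and bound $d_{\Gamma^{ext}_{h,T}}$ and $\Pi_Tw$ exactly as in (\ref{pro_esti_IFEbasis2})--(\ref{pro_esti_IFEbasis3}). One small slip: since $w^-\equiv 0$, the polynomial extension on the minus side is $\phi_{i,T}^-=\lambda_{i,T}-c_{i,T}\,\Pi_Tw$, not $\phi_{i,T}^-=\lambda_{i,T}$; this is harmless, because the bound $|\Pi_Tw|_{W^m_\infty(T)}\leq Ch_T^{1-m}$ that you already establish, together with $|c_{i,T}|\leq Ch_T^{-1}$, gives the required estimate for this extension as well.
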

\begin{proof}
In view of (\ref{express_IFE_basis}) and (\ref{phiJ_pro})  we have 
\begin{equation}\label{pro_esti_IFEbasis1}
\begin{aligned}
&\phi_{i,T}^+(\mathbf{x})=\lambda_{i,T}(\mathbf{x})+\frac{(\beta^-_T/\beta^+_T-1)\nabla \lambda_{i,T} \cdot \mathbf{n}_h}{1+(\beta^-_T/\beta^+_T-1)|T_h^+|/|T|}(d_{\Gamma^{ext}_{h,T}}(\mathbf{x})-\Pi_Tw(\mathbf{x}))\quad &&\forall \mathbf{x}\in T,\\
&\phi_{i,T}^-(\mathbf{x})=\lambda_{i,T}(\mathbf{x})-\frac{(\beta^-_T/\beta^+_T-1)\nabla \lambda_{i,T} \cdot \mathbf{n}_h}{1+(\beta^-_T/\beta^+_T-1)|T_h^+|/|T|}\Pi_Tw(\mathbf{x})\quad &&\forall \mathbf{x}\in T.\\
\end{aligned}
\end{equation}
By definition, it is easy to verify 
\begin{equation}\label{pro_esti_IFEbasis2}
\|d_{\Gamma^{ext}_{h,T}}\|_{L^\infty(T)}\leq Ch_T, ~~~|d_{\Gamma^{ext}_{h,T}}|_{W_\infty^1(T)}=1
\end{equation}
and
\begin{equation}\label{pro_esti_IFEbasis3}
\begin{aligned}
|\Pi_T w|_{W_\infty^m(T)}&=\left|\sum_{i=1}^{N+1} \lambda_{i,T}\frac{1}{|F_i|}\int_{F_i} w\right|_{W_\infty^m(T)}\\
&\leq Ch_T\sum_{i=1}^{N+1}|\lambda_{i,T}|_{W_\infty^m(T)}\\
&\leq Ch^{1-m}_T.
\end{aligned}
\end{equation}
Combining (\ref{pro_esti_IFEbasis1})-(\ref{pro_esti_IFEbasis3}), (\ref{pro_basi_fenmu}) and (\ref{esti_lambda}) yields the desired result (\ref{est_IFEbasis}).
\end{proof}

\subsection{Bounds for the interpolation errors}
In this subsection we prove the optimal  IFE interpolation error estimates. To begin with, we summary the following useful properties of the standard Crouzeix-Raviart interpolation operator $\Pi_T$:
\begin{align}
&|\Pi_T v-v|_{H^m(T)}\leq Ch_T^{2-m}|v|_{H^2(T)},~m=0,1,\label{pro_PI_1}\\
&\|\Pi_T v-v\|_{L^\infty(T)}\leq Ch_T^{2-N/2}|v|_{H^2(T)},\label{pro_PI_2}\\
&|\Pi_T v|_{H^1(T)}\leq C|v|_{H^1(T)},\label{pro_PI_3}
\end{align}
which are fundamental results in the finite element analysis. Here we emphasize that the interpolation operator $\Pi_T$ is defined based on the integral values on edges so that the estimate (\ref{pro_PI_3}) holds.  It is worth noting that we only use the above properties of the operator $\Pi_T$, so we can replace it by another operator satisfying the same properties, for example, the $L^2$ projection onto $\mathbb{P}_1(T)$.

In the analysis, we need a broken operator $E^{BK}_h: H^m(\cup \Omega^\pm)\rightarrow H^m(\cup \Omega_h^\pm)$ for any $m \geq 0$ defined by
\begin{equation}\label{def_BK}
(E^{BK}_hv)|_{\Omega_h^\pm}=v_E^\pm.
\end{equation}
Similarly, the operator $\Pi_T^{BK} : H^k(\cup T^\pm)\rightarrow H^k(\cup T_h^\pm)$ is defined  by
\begin{equation}\label{def_BK2}
(\Pi_T^{BK}v)|_{T_h^\pm}=\Pi_Tv_E^\pm.
\end{equation}
Let $\widetilde{\Pi}_T^{\rm IFE}: W(T)\rightarrow S_h(T)$ be the local IFE interpolation operator  defined by
\begin{equation*}
\mathcal{N}_{i,T}(\widetilde{\Pi}_T^{\rm IFE} v)=\mathcal{N}_{i,T}(v),~~i=1,...,N+1.
\end{equation*}
We also need the IFE interpolation operator $\Pi_T^{\rm IFE}:=\widetilde{\Pi}_T^{\rm IFE}E^{BK}_h$.
Obviously,
\begin{equation}\label{def_inter_perp}
\mathcal{N}_{i,T}(\Pi_T^{\rm IFE} v)=\mathcal{N}_{i,T}(\widetilde{\Pi}_T^{\rm IFE} E^{BK}_hv)=\mathcal{N}_{i,T}(E^{BK}_hv).
\end{equation}
 
For each $F\in\mathcal{F}_h^\circ$, denote by $\mathbf{n}_F$ a unit vector normal to $F$ and  let $T^F_1$ and $T^F_2$ be two elements sharing the common face $F$ such that $\mathbf{n}_F$ points from $T^F_1$ to $T^F_2$.
The jump across the face is denoted by  $[v]_F:=v|_{T_1^F}-v|_{T_2^F}.$
When $F\in \mathcal{F}_h^b$,~ $\mathbf{n}_F$ is the unit outward normal vector of $\partial \Omega$ and $[v]_F:=v$.
We then define the global IFE space by
\begin{equation*}
\begin{split}
V^{\rm IFE}_h:=\{v: v|_T\in \mathbb{P}_1(T)~\forall T\in\mathcal{T}_h^{non},~ v|_T\in S_h(T)~\forall T\in\mathcal{T}_h^{\Gamma}, \int_{F}[v]_F=0~\forall F\in \mathcal{F}^\circ  \}
\end{split}
\end{equation*}
and  the global IFE interpolation operator $\Pi_h^{\rm IFE}: \widetilde{H}^2(\Omega)\rightarrow V^{\rm IFE}_h$ by
\begin{equation*}
(\Pi_h^{\rm IFE}v)|_{T}=\left\{
\begin{aligned}
&\Pi_T^{\rm IFE}v \quad &&\mbox{ if } T\in\mathcal{T}_h^\Gamma, \\
&\Pi_T v  \quad &&\mbox{ if } T\in\mathcal{T}_h^{non}.
\end{aligned}
\right.
\end{equation*}
Analogously, the standard Crouzeix-Raviart finite element space $V_h$ and the interpolation operator $\Pi_h: \widetilde{H}^2(\Omega)\rightarrow V_h$ are defined by 
$V_h:=\{v: v|_T\in \mathbb{P}_1(T)~\forall T\in\mathcal{T}_h, \int_{F}[v]_F=0~\forall F\in \mathcal{F}^\circ  \}$ and  $(\Pi_hv)|_{T}=\Pi_T(v|_{T})$, respectively.

For clarity, we outline our approach for deriving the bounds of the interpolation errors. We aim to estimate
\begin{equation}\label{outline_1}
\begin{aligned}
\left|E^{BK}_hv- \Pi_{T}^{\rm IFE}v\right|^2_{H^m(\cup T_h^\pm)}\leq \sum_{s=\pm}|v_E^s- \left(\Pi_{T}^{\rm IFE}v\right)^s|^2_{H^m(T)}\quad \forall T\in\mathcal{T}_h^\Gamma.
\end{aligned}
\end{equation}
Obviously, we have the split
\begin{equation}\label{outline_2}
\begin{aligned}
|v_E^\pm- \left(\Pi_{T}^{\rm IFE}v\right)^\pm|_{H^m(T)}
\leq \underbrace{|v_E^\pm-\Pi_{T}v_E^\pm|_{H^m(T)}}_{({\rm I})_1}+\underbrace{|\Pi_{T}v_E^\pm- \left(\Pi_{T}^{\rm IFE}v\right)^\pm|_{H^m(T)}}_{({\rm I})_2}.
\end{aligned}
\end{equation}
The estimate of the first term $({\rm I})_1$ follows directly from  (\ref{pro_PI_1}) and the main difficulty is to estimate the second term $({\rm I})_2$. 
Noticing that the function in $({\rm I})_2$ is piecewise linear on $T_h^\pm$, our idea is to decompose it by proper degrees of freedom (see Lemma~\ref{lem_decomp}), and then estimate each terms in the decomposition (see Theorem~\ref{theo_T_inter}). The degrees of freedom for determining the function in the term $({\rm I})_2$ should include $\mathcal{N}_{i,T}$, $i=1,...,N+1$, and others related to the interface jumps, which inspire us to define the novel auxiliary functions $\Psi_T$, $\Upsilon_{T}$ and $\Theta_{i,T}$, $i=1,...,N-1$ as follows.

On each interface element $T$,  the  auxiliary functions $\Psi_T$, $\Upsilon_{T}$ and $\Theta_{i,T}$, $i=1,...,N-1$, are defined such that 
\begin{equation}\label{def_auxi1}
\begin{aligned}
&\Psi_T|_{T_h^\pm}\in\mathbb{P}_1(T_h^\pm) ,~\Upsilon_{T}|_{T_h^\pm}\in\mathbb{P}_1(T_h^\pm),~\Theta_{i,T}|_{T_h^\pm}\in\mathbb{P}_1(T_h^\pm),\\
&\mathcal{N}_{j,T}(\Psi_T)=\mathcal{N}_{j,T}(\Upsilon_{T})=\mathcal{N}_{j,T}(\Theta_{i,T})=0,~j=1,...,N+1,
\end{aligned}
\end{equation}
and
\begin{equation}\label{def_auxi2}
\begin{aligned}
&[\![\Psi_T^\pm]\!](\mathbf{x}_T^P)=1,~&&[\![\beta_T^\pm \nabla\Psi_T^\pm \cdot \textbf{n}_h]\!]=0,~ &&[\![\nabla\Psi_T^\pm \cdot \textbf{t}_{j,h}]\!]=0,&& j=1,...,N-1,\\
&[\![\Upsilon_{T}^\pm]\!](\mathbf{x}_T^P)=0,~&&[\![\beta_T^\pm \nabla\Upsilon_{T}^\pm \cdot \textbf{n}_h]\!]=1,~ &&[\![\nabla\Upsilon_{T}^\pm \cdot \textbf{t}_{j,h}]\!]=0,&& j=1,...,N-1,\\
&[\![\Theta_{i,T}^\pm]\!](\mathbf{x}_T^P)=0,~&&[\![\beta_T^\pm \nabla\Theta_{i,T}^\pm \cdot \textbf{n}_h]\!]=0,~ &&[\![\nabla\Theta_{i,T}^\pm \cdot \textbf{t}_{j,h}]\!]=\delta_{ij},&& j=1,...,N-1,
\end{aligned}
\end{equation}
where $\mathbf{t}_{j,h}$, $j=1,...,N-1$ are defined in Remark~\ref{remark_th} and the point $\mathbf{x}_T^P$ belonging to the plane $\Gamma_{h,T}^{ext}$ is chosen carefully as follows. 
We set $\mathbf{x}_T^P=\mathbf{p}_{\Gamma_{h,T}^{ext}}(\mathbf{x}_T^S)$, where $\mathbf{x}_T^S$ is an arbitrary point on the surface $\Gamma_{T}$ and  $\mathbf{p}_{\Gamma_{h,T}^{ext}}$ is the  orthogonal projection onto the plane $\Gamma_{h,T}^{ext}$.
From  (\ref{ass_Gamma_h_2}) we have the relation 
\begin{equation}\label{tpts}
\left|\mathbf{x}_T^P-\mathbf{x}_T^S\right|\leq \|\mbox{dist}(\mathbf{x},\Gamma_{h,T}^{ext})\|_{L^\infty(\Gamma_T)}\leq Ch_T^2.
\end{equation}

\begin{remark}\label{remark_xtxp}
We note that the point $\mathbf{x}_T^P\in \Gamma^{ext}_{h,T}$ may not belong to the planar segment $\Gamma_{h,T}$ (see Figure~\ref{fig_inte_2d} for the 2D case). This choice of $\mathbf{x}_T^P$ and $\mathbf{x}_T^S$ is crucial in deriving the bound for $|a_T|$ in (\ref{xts_dif_0}). Otherwise, if we choose  $\mathbf{x}_T^S=\mathbf{p}(\mathbf{x}_T^P)$ with a point $\mathbf{x}_T^P\in \Gamma_{h,T}$, although the relation (\ref{tpts}) also holds, the point $\mathbf{x}_T^S$ may be  outside of $T$, which brings difficulties in the analysis (see (\ref{xts_dif})).
\end{remark}

\begin{lemma}\label{lem_arxi}
The functions $\Psi_T$, $\Upsilon_{T}$ and $\Theta_{i,T}$, $i=1,...N-1$, exist uniquely and satisfy  
\begin{equation}\label{est_psi}
|\Psi^\pm_{T}|_{W^m_\infty(T)}\leq Ch_T^{-m},\quad |\Upsilon^\pm_{T}|_{W^m_\infty(T)}\leq Ch_T^{1-m},\quad |\Theta^\pm_{i,T}|_{W^m_\infty(T)}\leq Ch_T^{1-m},~~m=0,1,
\end{equation}
where the constant $C$ depends only on $\beta_T^\pm$ and the shape regularity parameter $\varrho$.
\end{lemma}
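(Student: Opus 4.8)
The plan is to construct the three auxiliary functions explicitly, in complete analogy with the construction of $\phi_J$ in (\ref{phiJ_pro}) and the IFE basis functions in Theorem~\ref{theo_basis}, and then read off the bounds from the explicit formulas. The key observation is that each of $\Psi_T$, $\Upsilon_T$, $\Theta_{i,T}$ is a piecewise linear function on $T_h^\pm$ that vanishes under every $\mathcal{N}_{j,T}$, so by the same reasoning as in Section~\ref{subsec_uniso} it must have the form $c\,(g - \Pi_T g)$ for a suitable piecewise linear generator $g$ whose jump data matches the prescribed right-hand sides in (\ref{def_auxi2}). Concretely: for $\Upsilon_T$ I would take the generator used for $\phi_J$, namely $w$ from (\ref{phiJ_pro}), which already satisfies $[\![w^\pm]\!](\mathbf{x}_T^P) = 0$ and $[\![\nabla w^\pm\cdot\mathbf{t}_{j,h}]\!] = 0$ and $[\![\nabla w^\pm\cdot\mathbf{n}_h]\!] = 1$, and use (\ref{eq_mu})-style bookkeeping together with Lemma~\ref{lem_01} to rescale so that the \emph{weighted} normal-flux jump $[\![\beta_T^\pm\nabla\Upsilon_T^\pm\cdot\mathbf{n}_h]\!]$ equals $1$; the denominator $1+(\beta_T^-/\beta_T^+-1)|T_h^+|/|T|$ is bounded below away from zero exactly as in (\ref{pro_basi_fenmu}). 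For $\Theta_{i,T}$ I would take $g$ to be (a fixed extension of) the linear coordinate function in the $\mathbf{t}_{i,h}$-direction measured from $\mathbf{x}_T^P$ on $T_h^+$ and zero on $T_h^-$ — this has unit tangential-gradient jump in direction $i$, zero in the other tangential directions, zero normal-gradient jump, and zero value jump at $\mathbf{x}_T^P$ — so no rescaling is needed at all and $\Theta_{i,T} = g - \Pi_T g$. For $\Psi_T$ I would take $g$ to be the function equal to the constant $1$ on $T_h^+$ and $0$ on $T_h^-$: its value jump at $\mathbf{x}_T^P$ is $1$, all gradient jumps vanish, hence again $\Psi_T = g - \Pi_T g$ directly.

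**Uniqueness** is the easy half and I would dispatch it first: if two functions satisfy the same conditions in (\ref{def_auxi1})-(\ref{def_auxi2}), their difference $\psi$ is piecewise linear on $T_h^\pm$ with all $\mathcal{N}_{j,T}(\psi)=0$ and with zero value-jump at $\mathbf{x}_T^P$ and zero gradient-jumps in all $N$ directions $\{\mathbf{t}_{j,h},\mathbf{n}_h\}$; the latter forces $\nabla\psi^+ = \nabla\psi^-$ and $\psi^+(\mathbf{x}_T^P)=\psi^-(\mathbf{x}_T^P)$, so $\psi^+ \equiv \psi^-$ as a single global linear polynomial on $T$, which is then killed by the unisolvence of the standard Crouzeix--Raviart element (\ref{def_lambda}). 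So $\psi = 0$.

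**For the bounds**, once the explicit forms $\Psi_T = g_\Psi - \Pi_T g_\Psi$, $\Upsilon_T = c_\Upsilon(w - \Pi_T w)$, $\Theta_{i,T} = g_{\Theta,i} - \Pi_T g_{\Theta,i}$ are in hand, the estimates (\ref{est_psi}) are mechanical. The generator $g_\Psi$ is an $O(1)$ piecewise constant, so $|g_\Psi|_{W^0_\infty(T)}\le 1$ and $|g_\Psi|_{W^1_\infty(T)}=0$; combined with $|\Pi_T g_\Psi|_{W^m_\infty(T)}\le Ch_T^{-m}\sum_i|\mathcal{N}_{i,T}(g_\Psi)|\le Ch_T^{-m}$ — using the identity $\nabla\lambda_{i,T}=\tfrac{N}{l_i}\boldsymbol{\nu}_i$, shape-regularity, and (\ref{esti_lambda}) as in (\ref{pro_esti_IFEbasis3}) — this gives $|\Psi_T^\pm|_{W^m_\infty(T)}\le Ch_T^{-m}$. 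The generators $w$ and $g_{\Theta,i}$ are piecewise linear functions of size $O(h_T)$ with unit Lipschitz constant, so (\ref{pro_esti_IFEbasis2})–(\ref{pro_esti_IFEbasis3}) give $|w - \Pi_T w|_{W^m_\infty(T)}\le Ch_T^{1-m}$ and likewise for $g_{\Theta,i}$; since $c_\Upsilon$ is bounded by a constant depending only on $\beta_T^\pm$, we obtain the $Ch_T^{1-m}$ bounds for $\Upsilon_T$ and $\Theta_{i,T}$.

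**The main obstacle** I anticipate is making the generator $g_{\Theta,i}$ for $\Theta_{i,T}$ genuinely well-defined and bounded independently of the interface location: the natural choice ``$(\mathbf{x}-\mathbf{x}_T^P)\cdot\mathbf{t}_{i,h}$ on $T_h^+$, zero on $T_h^-$'' is a linear polynomial on each piece, but one must confirm that its value jump at $\mathbf{x}_T^P$ really is zero (it is, since the $T_h^+$-piece vanishes at $\mathbf{x}_T^P\in\Gamma_{h,T}^{ext}$ and the $T_h^-$-piece is identically zero), and — more delicately — that $\mathbf{x}_T^P$ lying possibly \emph{outside} $\Gamma_{h,T}$ (Remark~\ref{remark_xtxp}) does not spoil the $L^\infty(T)$ bound. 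Here one invokes $|\mathbf{x}-\mathbf{x}_T^P|\le |\mathbf{x}-\mathbf{x}_T^S| + |\mathbf{x}_T^S-\mathbf{x}_T^P| \le Ch_T + Ch_T^2 \le Ch_T$ for $\mathbf{x}\in\overline T$, using $\mathbf{x}_T^S\in\overline{\Gamma_T}\subset\overline T$ and (\ref{tpts}). A second, lesser, subtlety is that the weighted-flux rescaling producing $c_\Upsilon$ must be justified uniformly: this is precisely where Lemma~\ref{lem_01} and the lower bound (\ref{pro_basi_fenmu}) are indispensable, guaranteeing the rescaling constant is controlled by $\beta_T^\pm$ alone.
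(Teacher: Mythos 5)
Your construction of $\Upsilon_T$ (rescale $\phi_J=w-\Pi_Tw$ by a constant controlled via Lemma~\ref{lem_01} and (\ref{pro_basi_fenmu})) is fine, but the constructions of $\Psi_T$ and $\Theta_{i,T}$ contain a genuine gap: you subtract the \emph{standard} Crouzeix--Raviart interpolant $\Pi_T g$, and since $\Pi_T g$ is a single linear polynomial on all of $T$, subtracting it leaves the unweighted jumps $[\![\cdot]\!]$, $[\![\nabla\cdot\,\mathbf{t}_{j,h}]\!]$, $[\![\nabla\cdot\,\mathbf{n}_h]\!]$ unchanged, but it changes the \emph{weighted} flux jump by $-(\beta_T^+-\beta_T^-)\,\nabla\Pi_T g\cdot\mathbf{n}_h$, which is required to vanish in (\ref{def_auxi2}). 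This extra term is not zero: for $g=\mathbf{1}_{T_h^+}$ the Gauss-theorem computation of Lemma~\ref{lem_01} gives $\nabla\Pi_T g\cdot\mathbf{n}_h=|\Gamma_{h,T}|/|T|>0$, and for $g=(\mathbf{x}-\mathbf{x}_T^P)\cdot\mathbf{t}_{i,h}\,\mathbf{1}_{T_h^+}$ the same argument gives $\nabla\Pi_T g\cdot\mathbf{n}_h=|T|^{-1}\int_{\Gamma_{h,T}}g$, which is generically nonzero. Hence your $\Psi_T$ and $\Theta_{i,T}$ satisfy $[\![\beta_T^\pm\nabla\cdot\,\mathbf{n}_h]\!]=0$ only when $\beta_T^+=\beta_T^-$. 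The paper avoids this by subtracting the \emph{IFE} interpolant $\widetilde{\Pi}_T^{\rm IFE}g=\sum_i\mathcal{N}_{i,T}(g)\phi_{i,T}\in S_h(T)$: since members of $S_h(T)$ have zero value jump and zero weighted flux jump across $\Gamma_{h,T}^{ext}$, subtracting them preserves all the prescribed jump data while zeroing the face averages; the $L^\infty$ bounds then follow from the bounds on the IFE basis functions in Theorem~\ref{lem_est_IFEbasis} (not from (\ref{esti_lambda}) alone), together with $|\mathcal{N}_{i,T}(g)|\leq C$ or $\leq Ch_T$ and the estimate $|\mathbf{x}-\mathbf{x}_T^P|\leq Ch_T$ from (\ref{tpts}), which you do handle correctly.

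Your uniqueness argument has a related, smaller flaw: the difference $\psi$ of two candidates has zero jump of the \emph{weighted} normal derivative, $[\![\beta_T^\pm\nabla\psi^\pm\cdot\mathbf{n}_h]\!]=0$, not of $[\![\nabla\psi^\pm\cdot\mathbf{n}_h]\!]$, so when $\beta_T^+\neq\beta_T^-$ you cannot conclude $\nabla\psi^+=\nabla\psi^-$ and reduce to the standard CR unisolvence. What the conditions do give (via Remark~\ref{remark_th}) is $\psi\in S_h(T)$ with $\mathcal{N}_{j,T}(\psi)=0$ for all $j$, and then $\psi=0$ follows from the unisolvence of the \emph{IFE} basis established in Section~\ref{subsec_uniso}; this is exactly the paper's argument and the fix is immediate, but as written your reduction is incorrect.
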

\begin{proof}
First we  prove the uniqueness.  Suppose there is another function, denoted by $\tilde{\Psi}_T$, satisfying the same conditions as $\Psi_T$ in (\ref{def_auxi1})-(\ref{def_auxi2}). Then it is easy to see
\begin{equation*}
[\![(\tilde{\Psi}_T-\Psi_T)^\pm]\!](\mathbf{x}_T^P)=0,~[\![\beta_T^\pm\nabla(\tilde{\Psi}_T-\Psi_T)^\pm\cdot\mathbf{n}_h]\!]=0,~[\![\nabla(\tilde{\Psi}_T-\Psi_T)^\pm\cdot\mathbf{t}_{i,h}]\!]=0,
\end{equation*}
which leads to $\tilde{\Psi}_T-\Psi_T\in S_h(T)$. By definition, we also have $\mathcal{N}_{i,T}(\tilde{\Psi}_T-\Psi_T)=0$, $i=1,...,N+1$. Thus, by the unisolvence of the basis functions proved in Section~\ref{subsec_uniso}, we obtain  $\tilde{\Psi}_T-\Psi_T=0$, which implies that $\Psi_T$ is unique. Same analysis is valid to prove the uniqueness of $\Upsilon_T$ and $\Theta_{i,T}$. 

Next, we derive the estimates in (\ref{est_psi}). Obviously, $\Psi_T(\mathbf{x})$ can be constructed explicitly as 
\begin{equation*}
\Psi_T(\mathbf{x})=z(\mathbf{x})-\widetilde{\Pi}^{\rm IFE}_Tz(\mathbf{x})\quad\mbox{ with }\quad
z(\mathbf{x})=\left\{
\begin{aligned}
&1\quad &&\mbox{ if } \mathbf{x}\in T_h^+,\\
&0&&\mbox{ if } \mathbf{x}\in T_h^-.
\end{aligned} 
\right.
\end{equation*}
We have
\begin{equation*}
\begin{aligned}
\|z^\pm\|_{L^\infty(T)}\leq 1,~~|z^\pm|_{W^1_\infty(T)}=0,~~\left|\mathcal{N}_{i,T}(z)\right|\leq 1,~i=1,...,N+1,
\end{aligned}
\end{equation*}
which together with  (\ref{est_IFEbasis}) leads to the first inequality in (\ref{est_psi}), i.e.,
\begin{equation*}
\begin{aligned}
|\Psi_T^\pm|_{W^m_\infty(T)}&\leq |z^\pm|_{W^m_\infty(T)}+ \sum_{i=1}^{N+1}\left|\mathcal{N}_{i,T}(z)\right||\phi_{i,T}^\pm  |_{W^m_\infty(T)}\leq Ch^{-m}.
\end{aligned}
\end{equation*}
To derive the estimate for $\left|\Upsilon_T^\pm\right|_{W^m_\infty(T)}$, we construct $\Upsilon_T(\mathbf{x})$ as
\begin{equation*}
\Upsilon_{T}(\mathbf{x})=z(\mathbf{x})-\widetilde{\Pi}^{\rm IFE}_Tz(\mathbf{x})\quad\mbox{ with }\quad
z(\mathbf{x})=\left\{
\begin{aligned}
&\frac{1}{\beta_T^+}(\mathbf{x}-\mathbf{x}_T^P)\cdot\mathbf{n}_h\quad &&\mbox{ if } \mathbf{x}\in T_h^+,\\
&0&&\mbox{ if } \mathbf{x}\in T_h^-.
\end{aligned} 
\right.
\end{equation*}
To deal with that the point $\mathbf{x}_T^P$ may not belong to $T$ (see Remark~\ref{remark_xtxp}),  we use the inequality (\ref{tpts}) and  fact $\mathbf{x}_T^S\in\Gamma_{T}\subset T$ to get
\begin{equation*}
|\mathbf{x}-\mathbf{x}_T^P|\leq |\mathbf{x}-\mathbf{x}_T^S|+|\mathbf{x}_T^S-\mathbf{x}_T^P|\leq Ch_T\qquad \forall \mathbf{x}\in T.
\end{equation*}
Now we have
\begin{equation*}
\begin{aligned}
\|z^\pm\|_{L^\infty(T)}\leq Ch_T,~~|z^\pm|_{W^1_\infty(T)}\leq C,~~\left|\mathcal{N}_{i,T}(z)\right|\leq Ch_T,~i=1,...,N+1,
\end{aligned}
\end{equation*}
which together with (\ref{est_IFEbasis}) leads to the desired result 
 \begin{equation*}
 \begin{aligned}
|\Upsilon_T^\pm|_{W^m_\infty(T)}&\leq |z^\pm|_{W^m_\infty(T)}+ \sum_{i=1}^{N+1}\left|\mathcal{N}_{i,T}(z)\right||\phi_{i,T}^\pm  |_{W^m_\infty(T)}\leq Ch^{1-m}.
\end{aligned}
\end{equation*}
Same analysis is valid to prove the third inequality in (\ref{est_psi})   if we construct $\Theta_{i,T}$ as
\begin{equation*}
\Theta_{i,T}(\mathbf{x})=z(\mathbf{x})-\widetilde{\Pi}^{\rm IFE}_Tz(\mathbf{x})\quad\mbox{ with }\quad
z(\mathbf{x})=\left\{
\begin{aligned}
&(\mathbf{x}-\mathbf{x}_T^P)\cdot\mathbf{t}_{i,h}\quad &&\mbox{ if } \mathbf{x}\in T_h^+,\\
&0&&\mbox{ if } \mathbf{x}\in T_h^-.
\end{aligned} 
\right.
\end{equation*}
\end{proof}
 
\begin{lemma}\label{lem_decomp}
For all $T\in\mathcal{T}_h^\Gamma$ and  $v\in \widetilde{H}^2(T)$, we have the following decomposition 
\begin{equation*}
\Pi_T v_E^\pm-\left(\Pi_T^{\rm IFE}v\right)^\pm = a_T \Psi_{T}^\pm+b_{T} \Upsilon_{T}^\pm+\sum_{i=1}^{N-1}c_{i,T} \Theta_{i,T}^\pm+\sum_{i=1}^{N+1}g_{i,T}\phi_{i,T}^\pm,
\end{equation*}
where the constants $a_T$, $b_{T}$, $c_{i,T}$ and $g_{i,T}$ are defined as
\begin{equation}\label{coeff_abc}
\begin{aligned}
&a_T:=[\![\Pi_T v_E^\pm]\!](\mathbf{x}_T^P),~~~~&&b_{T}:=[\![\beta_T^\pm \nabla(\Pi_T v_E^\pm)\cdot \mathbf{n}_h]\!],\\
&c_{i,T}:=[\![\nabla(\Pi_T v_E^\pm)\cdot \mathbf{t}_{i,h}]\!],~~~~&&g_{i,T}:=\mathcal{N}_{i,T}(\Pi_T^{BK}v-E^{BK}_hv).
\end{aligned}
\end{equation}
\end{lemma}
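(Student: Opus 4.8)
The plan is to regard $w^{\pm}:=\Pi_T v_E^{\pm}-(\Pi_T^{\rm IFE}v)^{\pm}$ as a function that is piecewise in $\mathbb{P}_1$ on $T_h^{\pm}$, and to strip off successively the three interface-jump contributions, using $\Psi_T$, $\Upsilon_T$ and $\Theta_{i,T}$ from Lemma~\ref{lem_arxi} as the objects dual to those jumps; the remainder will belong to $S_h(T)$, and there the unisolvence established in Section~\ref{subsec_uniso} (see also Theorem~\ref{theo_basis}) finishes the expansion in the IFE basis $\phi_{i,T}$.

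First I would compute the three interface quantities $[\![w^{\pm}]\!](\mathbf{x}_T^P)$, $[\![\beta_T^{\pm}\nabla w^{\pm}\cdot\mathbf{n}_h]\!]$, and $[\![\nabla w^{\pm}\cdot\mathbf{t}_{i,h}]\!]$ for $i=1,\dots,N-1$. Since $\Pi_T^{\rm IFE}v\in S_h(T)$, the conditions $[\Pi_T^{\rm IFE}v]_{\Gamma_{h,T}}=0$ and $[\beta_T\nabla(\Pi_T^{\rm IFE}v)\cdot\mathbf{n}_h]_{\Gamma_{h,T}}=0$, rewritten by Remark~\ref{remark_th}, show that $(\Pi_T^{\rm IFE}v)^{\pm}$ is annihilated by each of these three functionals; hence they equal, respectively, $a_T$, $b_T$ and $c_{i,T}$ as defined in \eqn{coeff_abc}.

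Next, set $r:=w^{\pm}-a_T\Psi_T^{\pm}-b_T\Upsilon_T^{\pm}-\sum_{i=1}^{N-1}c_{i,T}\Theta_{i,T}^{\pm}$. By \eqn{def_auxi2} the functions $\Psi_T$, $\Upsilon_T$ and $\Theta_{i,T}$ realize the unit value of exactly one of the three jump functionals while vanishing under the other two, so $[\![r^{\pm}]\!](\mathbf{x}_T^P)=0$, $[\![\beta_T^{\pm}\nabla r^{\pm}\cdot\mathbf{n}_h]\!]=0$ and $[\![\nabla r^{\pm}\cdot\mathbf{t}_{i,h}]\!]=0$; invoking Remark~\ref{remark_th} once more yields $r\in S_h(T)$. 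By the unisolvence of $(T,S_h(T),\Sigma_T)$ proved in Section~\ref{subsec_uniso}, this forces $r=\sum_{i=1}^{N+1}\mathcal{N}_{i,T}(r)\,\phi_{i,T}$.

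Finally I would identify the coefficients $\mathcal{N}_{i,T}(r)$ with $g_{i,T}$. Since \eqn{def_auxi1} makes $\mathcal{N}_{j,T}$ vanish on $\Psi_T$, $\Upsilon_T$ and $\Theta_{i,T}$, we have $\mathcal{N}_{i,T}(r)=\mathcal{N}_{i,T}(w)$, where $w$ is the piecewise function with $w|_{T_h^{\pm}}=w^{\pm}|_{T_h^{\pm}}$; writing $\mathcal{N}_{i,T}(w)=\tfrac{1}{|F_i|}\big(\int_{F_i\cap T_h^+}w^+ + \int_{F_i\cap T_h^-}w^-\big)$ and using the definition \eqn{def_BK2} of $\Pi_T^{BK}$ together with $\mathcal{N}_{i,T}(\Pi_T^{\rm IFE}v)=\mathcal{N}_{i,T}(E^{BK}_hv)$ from \eqn{def_inter_perp}, this reassembles to $\mathcal{N}_{i,T}(\Pi_T^{BK}v-E^{BK}_hv)=g_{i,T}$. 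Collecting the four identities and moving the terms with $\Psi_T,\Upsilon_T,\Theta_{i,T},\phi_{i,T}$ to the right-hand side gives the stated decomposition. I expect this last reassembly --- tracking how the face averages $\mathcal{N}_{i,T}$ split across $T_h^{\pm}$ and then re-identifying them with the broken operators $\Pi_T^{BK}$ and $E^{BK}_h$ --- to be the only genuinely delicate step; the preceding ones are essentially forced by the defining properties of $\Psi_T$, $\Upsilon_T$ and $\Theta_{i,T}$.
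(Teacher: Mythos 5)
Your proposal is correct and follows essentially the same route as the paper's proof: you use the defining properties (\ref{def_auxi1})--(\ref{def_auxi2}) and Remark~\ref{remark_th} to reduce the difference to an element of $S_h(T)$, invoke the unisolvence from Section~\ref{subsec_uniso} to expand it in the basis $\phi_{i,T}$, and identify the coefficients via (\ref{def_inter_perp}); the paper merely phrases this by constructing the candidate combination $\tilde z$ and showing $\tilde z - z = 0$, which is the same argument in reverse.
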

\begin{proof}
Let $z|_{T_h^\pm}:= z^\pm$ with $ z^\pm:=\Pi_T v_E^\pm-\left(\Pi_T^{\rm IFE}v\right)^\pm$, then we have $ z=\Pi_T^{BK} v-\Pi_T^{\rm IFE}v$. Define another function as
\begin{equation}\label{pro_decomp1}
\begin{aligned}
\tilde{ z}:=&[\![ z^\pm]\!](\mathbf{x}_T^P)\Psi_{T}+ [\![\beta_T^\pm \nabla z^\pm\cdot \mathbf{n}_h]\!]\Upsilon_{T}+\sum_{i=1}^{N-1}[\![\nabla z^\pm\cdot \mathbf{t}_{i,h}]\!]\Theta_{i,T}+\sum_{i=1}^{N+1}\mathcal{N}_{i,T}( z)\phi_{i,T}.
\end{aligned}
\end{equation}
Next, we prove $\tilde{ z}=z$. It is easy to verify 
\begin{equation*}
[\![(\tilde{ z}- z)^\pm]\!](\mathbf{x}_T^P)=0,~
[\![\beta_T^\pm \nabla(\tilde{ z}- z)^\pm\cdot \mathbf{n}_h]\!]=0,~[\![\nabla(\tilde{ z}- z)^\pm\cdot \mathbf{t}_{i,h}]\!]=0,~ i=1,...,N-1,
\end{equation*}
which together with the definition of $S_h(T)$  (see also Remark~\ref{remark_th}) implies that $\tilde{ z}- z\in S_h(T)$. From (\ref{pro_decomp1}) we also have $\mathcal{N}_{i,T}(\tilde{ z}- z)=0$. Hence, by the unisolvence of the basis functions proved in Section~\ref{subsec_uniso}, we obtain $ z=\tilde{ z}$. 

It remains to evaluate the coefficients in (\ref{pro_decomp1}). Since $\Pi_T^{\rm IEF}v\in S_h(T)$, we have $[\![(\Pi_T^{\rm IEF}v)^\pm]\!](\mathbf{x}_T^P)=0$. Thus, we obtain
\begin{equation*}
\begin{aligned}
[\![ z^\pm]\!](\mathbf{x}_T^P)&=[\![\Pi_T v_E^\pm]\!](\mathbf{x}_T^P)-[\![(\Pi_T^{\rm IEF}v)^\pm]\!](\mathbf{x}_T^P)\\
&=[\![\Pi_T v_E^\pm]\!](\mathbf{x}_T^P)=a_T.
\end{aligned}
\end{equation*} 
The proof for $b_{T}$ and $c_{i,T}$ is similar since  $[\![\beta_T^\pm\nabla (\Pi_T^{\rm IEF}v)^\pm\cdot\mathbf{n}_{h}]\!]=[\![\nabla (\Pi_T^{\rm IEF}v)^\pm\cdot\mathbf{t}_{i,h}]\!]=0$ from the definition of $S_h(T)$  (see also Remark~\ref{remark_th}).
For the constants $g_{i,T}$, using (\ref{def_inter_perp}) we get
\begin{equation*}
\begin{aligned}
\mathcal{N}_{i,T}( z)&=\mathcal{N}_{i,T}(\Pi_T^{BK} v-\Pi_T^{\rm IFE}v)\\
&=\mathcal{N}_{i,T}(\Pi_T^{BK}v-E^{BK}_hv)=g_{i,T}.
\end{aligned}
\end{equation*}
This completes the proof of this lemma.
\end{proof}

\begin{theorem}\label{theo_T_inter}
For all $T\in\mathcal{T}_h^\Gamma$ and  $v\in \widetilde{H}^2(T)$,  there exists a constant $C$ independent of $h$ and the interface location relative to the mesh such that 
\begin{equation}\label{T_inter}
\begin{aligned}
|v_E^\pm-\left(\Pi_T^{\rm IFE}v\right)^\pm|^2_{H^m(T)}&\leq Ch_T^{4-2m}\sum_{s=\pm}\left(|v_E^s|^2_{H^1(T)}+|v_E^s|^2_{H^2(T)}\right)\\
&+Ch_T^{2-2m}\left(\left\|[\![\beta_E^\pm\nabla v_E^\pm\cdot\mathbf{n}]\!]\right\|^2_{L^2(T)}+\left\|[\![\nabla_\Gamma v_E^\pm]\!]\right\|^2_{L^2(T)}\right),~ m=0,1.
\end{aligned}
\end{equation}
\end{theorem}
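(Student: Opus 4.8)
The plan is to combine the split \eqn{outline_2} with the decomposition in Lemma~\ref{lem_decomp}. The first term $({\rm I})_1$ is handled immediately by the standard interpolation estimate \eqn{pro_PI_1}, which contributes $Ch_T^{4-2m}|v_E^\pm|^2_{H^2(T)}$ after squaring. For the second term $({\rm I})_2 = |\Pi_T v_E^\pm - (\Pi_T^{\rm IFE}v)^\pm|_{H^m(T)}$, I would insert the decomposition $\Pi_T v_E^\pm-(\Pi_T^{\rm IFE}v)^\pm = a_T\Psi_T^\pm + b_T\Upsilon_T^\pm + \sum_i c_{i,T}\Theta_{i,T}^\pm + \sum_i g_{i,T}\phi_{i,T}^\pm$ and use the triangle inequality, so that
\begin{equation*}
({\rm I})_2 \le |a_T|\,|\Psi_T^\pm|_{W^m_\infty(T)}|T|^{1/2} + |b_T|\,|\Upsilon_T^\pm|_{W^m_\infty(T)}|T|^{1/2} + \sum_{i}|c_{i,T}|\,|\Theta_{i,T}^\pm|_{W^m_\infty(T)}|T|^{1/2} + \sum_{i}|g_{i,T}|\,|\phi_{i,T}^\pm|_{W^m_\infty(T)}|T|^{1/2},
\end{equation*}
and then invoke the pointwise bounds on the auxiliary functions from Lemma~\ref{lem_arxi} and on the IFE basis functions from Theorem~\ref{lem_est_IFEbasis}. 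Since $|T|^{1/2}\le Ch_T^{N/2}$, the problem reduces to estimating the four scalar coefficients $a_T$, $b_T$, $c_{i,T}$, $g_{i,T}$.

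The coefficient estimates are where the real work is. For $g_{i,T} = \mathcal{N}_{i,T}(\Pi_T^{BK}v - E^{BK}_hv)$, I would write $\Pi_T^{BK}v - E^{BK}_hv$ on each $T_h^\pm$ as $\Pi_T v_E^\pm - v_E^\pm$ (using that on $T_h^+$ both $\Pi_T^{BK}v$ and $E_h^{BK}v$ equal the "$+$" pieces, etc.), take the average over the face $F_i$, bound it by $\|\Pi_T v_E^\pm - v_E^\pm\|_{L^\infty(\partial T)}$ via a trace inequality on the reference element, and control that by $Ch_T^{2-N/2}|v_E^\pm|_{H^2(T)}$ using \eqn{pro_PI_2}; squared and multiplied by $h_T^N$ this gives the desired $h_T^4|v_E^\pm|^2_{H^2}$ term. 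For $b_T = [\![\beta_T^\pm\nabla(\Pi_T v_E^\pm)\cdot\mathbf{n}_h]\!]$ and $c_{i,T} = [\![\nabla(\Pi_T v_E^\pm)\cdot\mathbf{t}_{i,h}]\!]$, the idea is to compare these constant jumps of the \emph{interpolants'} gradients against the exact-solution jumps, using $[\beta\nabla v\cdot\mathbf{n}]_\Gamma = 0$ and $[v]_\Gamma = 0$ (hence $\nabla_\Gamma$-jump $=0$) from $v\in\widetilde{H}^2(T)$. Concretely: $\nabla\Pi_T v_E^\pm$ is a constant approximating $\nabla v_E^\pm$ in $L^2(T)$ to order $h_T|v_E^\pm|_{H^2}$ by \eqn{pro_PI_1}; replacing $\beta_T^\pm$ by $\beta_E^\pm$ costs $Ch_T$ by \eqn{betaT} (and \eqn{beta_ext2}); replacing $\mathbf{n}_h$, $\mathbf{t}_{i,h}$ by $\mathbf{n}$, $\mathbf{t}_i$ costs $Ch_T$ by \eqn{n_h_esti}. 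Since the exact jumps $[\![\beta_E^\pm\nabla v_E^\pm\cdot\mathbf{n}]\!]$ and $[\![\nabla_\Gamma v_E^\pm]\!]$ need not vanish away from $\Gamma$ (only on $\Gamma$), the bound cannot be made to vanish; these two quantities appear explicitly on the right-hand side of \eqn{T_inter} with the factor $h_T^{1-m}$, which is exactly what squares to the $h_T^{2-2m}(\cdots)$ term. A standard $L^2$-vs-$L^\infty$ scaling on shape-regular simplices converts an $L^2(T)$ bound on a constant into the pointwise value with a harmless $h_T^{-N/2}$, so everything balances.

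The hardest coefficient is $a_T = [\![\Pi_T v_E^\pm]\!](\mathbf{x}_T^P)$, and this is where the careful choice $\mathbf{x}_T^P = \mathbf{p}_{\Gamma_{h,T}^{ext}}(\mathbf{x}_T^S)$ with $\mathbf{x}_T^S\in\Gamma_T$ (Remark~\ref{remark_xtxp}) is essential. I would estimate it as follows: since $[v_E^\pm]\!](\mathbf{x}_T^S) = [v]_\Gamma(\mathbf{x}_T^S) = 0$ (as $\mathbf{x}_T^S$ lies on the exact interface and $v\in\widetilde{H}^2$), write
\begin{equation*}
a_T = [\![\Pi_T v_E^\pm - v_E^\pm]\!](\mathbf{x}_T^P) + \big([\![v_E^\pm]\!](\mathbf{x}_T^P) - [\![v_E^\pm]\!](\mathbf{x}_T^S)\big),
\end{equation*}
and bound the first bracket by $\|\Pi_T v_E^\pm - v_E^\pm\|_{L^\infty(T)}\le Ch_T^{2-N/2}|v_E^\pm|_{H^2(T)}$ via \eqn{pro_PI_2}, and the second by a mean-value argument: $[\![v_E^\pm]\!]$ is a piecewise-smooth function whose $W^1_\infty$ norm on $T$ is controlled by $|v_E^\pm|_{W^1_\infty}$, and $|\mathbf{x}_T^P - \mathbf{x}_T^S|\le Ch_T^2$ by \eqn{tpts}, so this term is $\le Ch_T^2 \|\nabla[\![v_E^\pm]\!]\|_{L^\infty(T)}$. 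The subtlety is that $\|\nabla v_E^\pm\|_{L^\infty(T)}$ is not directly an $H^1$ or $H^2$ seminorm; one either invokes an inverse-type/Sobolev estimate on the extension or, more cleanly, further splits $\nabla v_E^\pm(\mathbf{x}) - \nabla(\Pi_T v_E^\pm) $ using \eqn{pro_PI_1} and the fact that $\nabla \Pi_T v_E^\pm$ is constant, reducing to $L^2$ quantities and picking up $|v_E^\pm|_{H^1(T)}$ and $|v_E^\pm|_{H^2(T)}$ with the right powers of $h_T$. Squaring $a_T$ and multiplying by $h_T^{N-2m}$ yields contributions of order $h_T^{4-2m}(|v_E^\pm|^2_{H^1}+|v_E^\pm|^2_{H^2})$, matching the statement. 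Summing the four coefficient contributions over $\pm$ and over $i$, and absorbing the term $({\rm I})_1$, gives \eqn{T_inter}. I expect the main obstacle to be bookkeeping the $\mathbf{n}_h\to\mathbf{n}$ and $\beta_T\to\beta_E$ perturbations in $b_T$ and $c_{i,T}$ without losing a power of $h_T$, and the delicate treatment of $a_T$ where the geometric error \eqn{tpts} of order $h_T^2$ must not be spoiled by a gradient that is only in $L^2$; the resolution in both cases is to keep everything in $L^2(T)$ as long as possible and convert to pointwise values only at the very end using shape-regular scaling.
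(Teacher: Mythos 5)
Your proposal follows essentially the same route as the paper's proof: the split \eqn{outline_2}, the decomposition of Lemma~\ref{lem_decomp} combined with the bounds of Lemma~\ref{lem_arxi} and Theorem~\ref{lem_est_IFEbasis}, and the same coefficient estimates for $a_T$, $b_T$, $c_{i,T}$, $g_{i,T}$ built from \eqn{pro_PI_1}--\eqn{pro_PI_3}, \eqn{betaT}, \eqn{n_h_esti} and \eqn{tpts}. The only real difference is in $a_T$, where the paper applies the mean-value step directly to the piecewise-linear jump $[\![\Pi_T v_E^\pm]\!]$ (whose gradient is constant and is controlled via its $L^2$ norm and the stability \eqn{pro_PI_3}), so that $v_E^\pm$ is never evaluated at $\mathbf{x}_T^P$ --- a point that may lie outside $T$ --- which is precisely the ``cleaner'' fix you sketch at the end of your argument.
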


Before proceeding with the proof, it is worth noting the following remark of this theorem.
\begin{remark}\label{remark_gamm_depend}
Since $v\in \widetilde{H}^2(T)$, by the definition (\ref{def_H2}) we have $[v]_{\Gamma_T}=[\beta\nabla v\cdot\mathbf{n}]_{\Gamma_T}=0$. This leads to 
\begin{equation}\label{jumpeq0}
[\![\nabla_\Gamma v_E^\pm]\!]|_{\Gamma_T}=\mathbf{0} ~~\mbox{ and }~~ [\![\beta_E^\pm\nabla v_E^\pm\cdot\mathbf{n}]\!]|_{\Gamma_T}=0.
\end{equation}
Therefore, it seems that we can obtain the optimal interpolation error estimate on each interface element by applying the following type of the Poincar\'e-Friedrichs  inequality to the second term on the right-hand side of (\ref{T_inter}),  
\begin{equation*}
\|v\|_{L^2(T)} \leq C_{P}h_T|v|_{H^1(T)} \quad \mbox{ for all } v\in H^1(T)\mbox{ with }v|_{\Gamma_T}=0.
\end{equation*}
Unfortunately, we cannot show that the constant $C_{P}$ is independent of the interface location relative to $T$. The proof of the above inequality is given as follows. Let $\bar{v}$ be the mean of $v$ on $T$, then it holds 
\begin{equation*}
\begin{aligned}
\|v\|_{L^2(T)} &\leq \|v-\bar{v}\|_{L^2(T)}+\|\bar{v}\|_{L^2(T)}\\
&\leq Ch_T|v|_{H^1(T)}+Ch_T^{N/2}|\Gamma_T|^{-1}\left|\int_{\Gamma_T}\bar{v}\right|.
\end{aligned}
\end{equation*}
Since $v|_{\Gamma_T}=0$, we can see
\begin{equation*}
\begin{aligned}
\left|\int_{\Gamma_T}\bar{v}\right|
&=\left|\int_{\Gamma_T}(\bar{v}-v)\right|\leq |\Gamma_T|^{1/2}\|\bar{v}-v\|^2_{L^2(\Gamma_T)}\\
&\leq C|\Gamma_T|^{1/2}\left(h_T^{-1/2}\|\bar{v}-v\|_{L^2(T)}+h_T^{1/2}|\bar{v}-v|_{H^1(T)}\right)\\
&\leq C|\Gamma_T|^{1/2}h_T^{1/2}|v|_{H^1(T)},
\end{aligned}
\end{equation*}
where in second inequality we have used the well-known trace inequality on the interface (see, e.g., \cite{hansbo2002unfitted,2016High}).  Combining the above inequalities yields 
\begin{equation*}
\|v\|_{L^2(T)} \leq \underbrace{ C\left(\sqrt{h_T^{N-1}|\Gamma_T|^{-1} }+1\right)}_{:=C_P}h_T|v|_{H^1(T)},
\end{equation*}
which implies that $C_P\rightarrow \infty$ as $|\Gamma_T|\rightarrow 0$. 
\end{remark}

To overcome the difficulty shown in Remark~\ref{remark_gamm_depend}, in the following theorem we take all the  interface elements together and carry out the analysis on a tubular neighborhood  of the interface $\Gamma$.
\begin{theorem}
For any $v\in  \widetilde{H}^2(\Omega)$, there exists a constant $C$ independent of $h$ and the interface location relative to the mesh such that 
\begin{align}\
&\sum_{T\in\mathcal{T}_h^\Gamma}|v_E^\pm-\left(\Pi_h^{\rm IFE}v\right)^\pm|^2_{H^m(T)}\leq Ch_\Gamma^{4-2m}\|v\|^2_{H^2(\cup\Omega^\pm)},~~m=0,1,\label{IFE_inter_gamma}\\
&\sum_{T\in\mathcal{T}_h}\left|v-\Pi_h^{\rm IFE}v\right|^2_{H^m(T)}\leq Ch^{4-2m}\|v\|^2_{H^2(\cup\Omega^\pm)},~~m=0,1.\label{IFE_inter_all}
\end{align}
\end{theorem}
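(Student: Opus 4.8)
The plan is to derive both bounds from the elementwise estimate of Theorem~\ref{theo_T_inter}, the standard interpolation bound (\ref{pro_PI_1}), and the strip inequalities of Lemma~\ref{lem_strip} and its corollaries (\ref{delta_est_yaun})--(\ref{delta_est_yaun2}), using crucially that, since $v\in\widetilde H^2(\Omega)$, the jump quantities involved all vanish on $\Gamma$.

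First I would prove (\ref{IFE_inter_gamma}) by summing (\ref{T_inter}) over $T\in\mathcal{T}_h^\Gamma$. The interface elements are pairwise disjoint and, by Assumption~\ref{assum_2}, contained in $U(\Gamma,\delta_0)\subset\Omega^\pm_{\delta_0}$, and $h_T\le h_\Gamma$; hence the contribution of the first two terms of (\ref{T_inter}) is at most $Ch_\Gamma^{4-2m}\big(\|v_E^+\|_{H^2(\Omega^+_{\delta_0})}^2+\|v_E^-\|_{H^2(\Omega^-_{\delta_0})}^2\big)\le Ch_\Gamma^{4-2m}\|v\|^2_{H^2(\cup\Omega^\pm)}$ by the extension stability (\ref{extension}). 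For the two jump terms I would use that each $T\in\mathcal{T}_h^\Gamma$ lies within distance $h_T\le h_\Gamma$ of $\Gamma$, so $\bigcup_{T\in\mathcal{T}_h^\Gamma}T\subset U(\Gamma,Ch_\Gamma)$, together with the identities (\ref{jumpeq0}) (valid on $\Gamma$ since $v\in\widetilde H^2(\Omega)$). Because $[\![\beta_E^\pm\nabla v_E^\pm\cdot\mathbf{n}]\!]$ and $[\![\nabla_\Gamma v_E^\pm]\!]$ lie in $H^1(U^\pm(\Gamma,\delta_0))$, with $H^1$-norm controlled by $\|v\|_{H^2(\cup\Omega^\pm)}$ (using $\mathbf{n},\beta_E^\pm\in C^1$ with bounded first derivatives and (\ref{extension})), and vanish on $\Gamma$, the strip inequality (\ref{delta_est_yaun2}) gives
\begin{equation*}
\sum_{T\in\mathcal{T}_h^\Gamma}\Big(\big\|[\![\beta_E^\pm\nabla v_E^\pm\cdot\mathbf{n}]\!]\big\|_{L^2(T)}^2+\big\|[\![\nabla_\Gamma v_E^\pm]\!]\big\|_{L^2(T)}^2\Big)\le Ch_\Gamma^2\|v\|^2_{H^2(\cup\Omega^\pm)}.
\end{equation*}
Multiplying by $h_\Gamma^{2-2m}$ and combining the two contributions yields (\ref{IFE_inter_gamma}).

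Next I would obtain (\ref{IFE_inter_all}) by writing $\mathcal{T}_h=\mathcal{T}_h^{non}\cup\mathcal{T}_h^\Gamma$. On a non-interface element $v\in H^2(T)$ and $\Pi_h^{\rm IFE}v=\Pi_T v$, so (\ref{pro_PI_1}) summed over $\mathcal{T}_h^{non}$ contributes $Ch^{4-2m}\|v\|^2_{H^2(\cup\Omega^\pm)}$. On an interface element $T$, both $v$ and $\Pi_h^{\rm IFE}v$ belong to $H^1(T)$, so I would split $T$ into the four pieces $T^{s_1}\cap T_h^{s_2}$, $s_1,s_2\in\{+,-\}$; on a matched piece $v-\Pi_h^{\rm IFE}v=v_E^{s}-(\Pi_T^{\rm IFE}v)^{s}$, while on a mismatched piece, which is contained in $T^\triangle$, one writes for instance $v-\Pi_h^{\rm IFE}v=v_E^--(\Pi_T^{\rm IFE}v)^+=-[\![v_E^\pm]\!]+\big(v_E^+-(\Pi_T^{\rm IFE}v)^+\big)$. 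The triangle inequality and the $h$-uniform bounds then give, for $m=0,1$,
\begin{equation*}
|v-\Pi_h^{\rm IFE}v|^2_{H^m(T)}\le C\sum_{s=\pm}|v_E^{s}-(\Pi_T^{\rm IFE}v)^{s}|^2_{H^m(T)}+C\,|[\![v_E^\pm]\!]|^2_{H^m(T^\triangle)}.
\end{equation*}
Summed over $T\in\mathcal{T}_h^\Gamma$, the first term is controlled by (\ref{IFE_inter_gamma}), and the second equals $|[\![v_E^\pm]\!]|^2_{H^m(\Omega^\triangle)}$; since $\Omega^\triangle\subset U(\Gamma,C_\Gamma h_\Gamma^2)$ by (\ref{tdelta_U}) and $[\![v_E^\pm]\!]|_\Gamma=[v]_\Gamma=0$, applying (\ref{delta_est_yaun}) for $m=1$ and (\ref{delta_est_yaun2}) for $m=0$ bounds it by $Ch_\Gamma^{2}\|v\|^2_{H^2(\cup\Omega^\pm)}$ and $Ch_\Gamma^{4}\|v\|^2_{H^2(\cup\Omega^\pm)}$, respectively. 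Since $h_\Gamma\le h$, collecting these estimates proves (\ref{IFE_inter_all}).

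The hard part is the treatment of the two jump terms in (\ref{T_inter}): as Remark~\ref{remark_gamm_depend} points out, an elementwise Poincar\'e--Friedrichs bound for them has a constant that degenerates as $|\Gamma_T|\to0$, so the essential idea is to forgo elementwise estimates here, sum over all interface elements first, and only then invoke the global strip inequality of Lemma~\ref{lem_strip} on a tubular neighbourhood of $\Gamma$ of width $O(h_\Gamma)$ (respectively $O(h_\Gamma^2)$ on the mismatch region $\Omega^\triangle$), which is precisely what supplies the missing powers of $h$. A secondary technical point is to check that the relevant jump quantities lie in $H^1$ of the appropriate neighbourhood with $H^1$-norm bounded by $\|v\|_{H^2(\cup\Omega^\pm)}$, which follows from the $C^1$-regularity of $\mathbf{n}$ and $\beta_E^\pm$ and the stability (\ref{extension}) of the Sobolev extensions.
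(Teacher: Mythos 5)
Your proposal is correct and follows essentially the same route as the paper: it sums the elementwise bound of Theorem~\ref{theo_T_inter} over $\mathcal{T}_h^\Gamma$ and controls the jump terms, which vanish on $\Gamma$ by (\ref{jumpeq0}), via the strip inequalities (\ref{delta_est_yaun})--(\ref{delta_est_yaun2}) on tubular neighbourhoods of width $O(h_\Gamma)$, and then handles (\ref{IFE_inter_all}) by separating off the mismatch region $\Omega^\triangle\subset U(\Gamma,C_\Gamma h_\Gamma^2)$ where the error reduces to $[\![v_E^\pm]\!]$. Your elementwise matched/mismatched splitting is just a per-element rephrasing of the paper's add-and-subtract of $E_h^{BK}v$, so the two arguments coincide in substance.
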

\begin{proof}
Noticing that $T\subset U(\Gamma, h_\Gamma)$ for all $T\in\mathcal{T}_h^\Gamma$, combining  (\ref{jumpeq0}) and Lemma~\ref{lem_strip} we have 
\begin{equation}\label{zhengti1}
\begin{aligned}
\sum_{T\in\mathcal{T}_h^\Gamma}\left\|[\![\beta_E^\pm\nabla v_E^\pm\cdot\mathbf{n}]\!]\right\|^2_{L^2(T)}
&\leq \left\|[\![\beta_E^\pm\nabla v_E^\pm\cdot\mathbf{n}]\!]\right\|^2_{L^2(U(\Gamma, h_\Gamma))}\\
&\leq Ch_\Gamma^2\left|[\![\beta_E^\pm\nabla v_E^\pm\cdot\mathbf{n}]\!]\right|^2_{H^1(U(\Gamma, h_\Gamma))}\\
&\leq Ch_\Gamma^2\sum_{s=\pm}\left(|v_E^s|^2_{H^1(U(\Gamma, h_\Gamma))}+|v_E^s|^2_{H^2(U(\Gamma, h_\Gamma))}\right),
\end{aligned}
\end{equation}
\begin{equation}\label{zhengti2}
\begin{aligned}
\sum_{T\in\mathcal{T}_h^\Gamma}\left\|[\![\nabla_\Gamma v_E^\pm]\!]\right\|^2_{L^2(T)}
&\leq Ch_\Gamma^2 \left|[\![\nabla_\Gamma v_E^\pm]\!]\right|^2_{H^1(U(\Gamma, h_\Gamma))}\\
&\leq Ch_\Gamma^2\left|[\![\nabla v_E^\pm-(\mathbf{n}\cdot \nabla v_E^\pm)\mathbf{n} ]\!]\right|^2_{H^1(U(\Gamma, h_\Gamma))}\\
&\leq Ch_\Gamma^2\sum_{s=\pm}\left(|v_E^s|^2_{H^1(U(\Gamma, h_\Gamma))}+|v_E^s|^2_{H^2(U(\Gamma, h_\Gamma))}\right),
\end{aligned}
\end{equation}
where we have used  (\ref{beta_ext1})-(\ref{beta_ext2})  and  $\mathbf{n}(\mathbf{x})\in C^1\left( U(\Gamma,\delta_0) \right)^N$ in the derivation. Therefore, the result (\ref{IFE_inter_gamma}) follows from (\ref{zhengti1}), (\ref{zhengti2}), Theorem~\ref{theo_T_inter} and the extension result (\ref{extension}). 

To prove the estimate (\ref{IFE_inter_all}), we need to consider the mismatch region caused by approximating $\Gamma$ by $\Gamma_h$. The triangle inequality gives
\begin{equation}\label{pro_int_newcoo_1}
\begin{aligned}
\sum_{T\in\mathcal{T}_h} |v-\Pi_h^{\rm IFE}v|^2_{H^m(T)}\leq C\sum_{T\in\mathcal{T}_h}\left|E^{BK}_hv- \Pi_{h}^{\rm IFE}v\right|^2_{H^m(\cup T_h^\pm)}+C\left|v-E^{BK}_hv\right|^2_{H^m(\cup \Omega_h^\pm)}.
\end{aligned}
\end{equation}
Recalling the relation  (\ref{outline_1}), the first term can be estimated by   (\ref{IFE_inter_gamma}) for interface elements and the standard interpolation error estimates for non-interface elements. Thus, it suffices to estimate the second term on the right-hand side of (\ref{pro_int_newcoo_1}). By the definition of $E_h^{BK}$ in (\ref{def_BK}) and the relation  (\ref{tdelta_U}) we have
\begin{equation*}
\left|v-E^{BK}_hv\right|^2_{H^m(\cup\Omega_h^\pm)}=\left| [\![v_E^\pm]\!] \right|^2_{H^m(\Omega^\triangle)}\leq \left|[\![v_E^\pm]\!]\right|^2_{H^m(U(\Gamma,C_\Gamma h_\Gamma^2))}.
\end{equation*}
Noticing that $[\![v_E^\pm]\!]|_\Gamma=0$, by  (\ref{delta_est_yaun2}) and (\ref{delta_est_yaun}), it holds
\begin{equation*}
\begin{aligned}
&\left\|[\![v_E^\pm]\!]\right\|^2_{L^2(U(\Gamma,C_\Gamma h_\Gamma^2))}\leq Ch_\Gamma^4\sum_{s=\pm}|v_E^s|^2_{H^1(U(\Gamma,C_\Gamma h_\Gamma^2))},\\
&\left\|[\![\nabla v_E^\pm]\!]\right\|^2_{L^2(U(\Gamma,C_\Gamma h_\Gamma^2))}\leq Ch_\Gamma^2\sum_{s=\pm}\left(|v_E^s|^2_{H^1(U(\Gamma,\delta_0))}+|v_E^s|^2_{H^2(U(\Gamma,\delta_0) }\right).
\end{aligned}
\end{equation*}
Combining the above inequalities and  the extension result (\ref{extension}) we arrive at 
\begin{equation}\label{est_BK}
\left|v-E^{BK}_hv\right|^2_{H^m(\cup\Omega_h^\pm)}\leq Ch_\Gamma^{4-2m}\|v\|^2_{H^2(\cup\Omega^\pm)},~~m=0,1.
\end{equation}
This completes the proof of this theorem.
\end{proof}

Now we give the proof of Theorem ~\ref{theo_T_inter}.
\begin{proof}[Proof of Theorem ~\ref{theo_T_inter}]
As shown in (\ref{outline_2}), we only need to estimate the second term $({\rm I})_2$.
Combining Theorem~\ref{lem_est_IFEbasis} and  Lemmas \ref{lem_arxi} and \ref{lem_decomp} yields 
\begin{equation}\label{pro_zong_abc}
|\Pi_T v_E^\pm-\left(\Pi_T^{\rm IFE}v\right)^\pm|^2_{H^m(T)}\leq Ch_T^{N-2m}\left(a_T^2+\sum_{i=1}^{N+1}g_{i,T}^2\right)+Ch_T^{N+2-2m}\left(b_T^2+\sum_{i=1}^{N-1}c_{i,T}^2\right).
\end{equation}
Here the constants $a_T$, $b_{T}$, $c_{i,T}$ and $g_{i,T}$ are defined in (\ref{coeff_abc}). We estimate these constants one by one.

\textbf{Derive bounds for $a_T$.} Using the triangle inequality,  the estimate (\ref{tpts}), and the fact that $[\![\Pi_T v_E^\pm]\!]\in\mathbb{P}_1(T)$ can be viewed as a polynomial defined on $\mathbb{R}^N$, we have
\begin{equation}\label{xts_dif_0}
\begin{aligned}
|a_T|&=|[\![\Pi_T v_E^\pm]\!](\mathbf{x}_T^P)|\\
&\leq \left|[\![\Pi_T v_E^\pm]\!](\mathbf{x}_T^S)\right|+\left|[\![\Pi_T v_E^\pm]\!](\mathbf{x}_T^S)-[\![\Pi_T v_E^\pm]\!](\mathbf{x}_T^P) \right|\\
&\leq  \left|[\![\Pi_T v_E^\pm]\!](\mathbf{x}_T^S)\right|+\left|[\![\nabla \Pi_T v_E^\pm]\!]\right|\left|\mathbf{x}_T^S-\mathbf{x}_T^P\right|\\
&\leq  \left|[\![\Pi_T v_E^\pm]\!](\mathbf{x}_T^S)\right|+Ch_T^2\left|[\![\nabla \Pi_T v_E^\pm]\!]\right|.
\end{aligned}
\end{equation}
Since $\mathbf{x}_T^S\in\Gamma_{T}\subset T$, it holds $[\![v_E^\pm]\!](\mathbf{x}_T^S)=0$, and by (\ref{pro_PI_2}), we get 
\begin{equation}\label{xts_dif}
\begin{aligned}
 \left|[\![\Pi_T v_E^\pm]\!](\mathbf{x}_T^S)\right|
 &= \left|[\![\Pi_T v_E^\pm-v_E^\pm]\!](\mathbf{x}_T^S)\right|\\
 &\leq \|[\![\Pi_T v_E^\pm-v_E^\pm]\!]\|_{L^\infty(T)} \\
 &\leq \sum_{s=\pm}\|\Pi_T v_E^s-v_E^s\|_{L^\infty(T)}\\
 &\leq Ch_T^{2-N/2}\sum_{s=\pm}|v_E^s|_{H^2(T)}.
\end{aligned}
\end{equation}
On the other hand,
\begin{equation*}
\begin{aligned}
\left|[\![\nabla \Pi_T v_E^\pm]\!]\right|&=|T|^{-1/2}\left\|[\![\nabla \Pi_T v_E^\pm]\!]\right\|_{L^2(T)}\\
&\leq Ch_T^{-N/2}\sum_{s=\pm} | \Pi_Tv_E^s|_{H^1(T)}\\
&\leq Ch_T^{-N/2}\sum_{s=\pm}|v_E^s|_{H^1(T)},
\end{aligned}
\end{equation*}
where we have used (\ref{pro_PI_3}) in the last inequality.
Combining the above inequalities yields 
\begin{equation}\label{pro_at}
a_T^2\leq Ch_T^{4-N}\sum_{s=\pm}\left(|v_E^s|^2_{H^1(T)}+|v_E^s|^2_{H^2(T)}\right).
\end{equation}

\textbf{Derive bounds for $b_{T}$.} By the fact $[\![\beta_T^\pm \nabla(\Pi_T v_E^\pm)\cdot \mathbf{n}_h]\!]$ is a constant on $T$, we have
\begin{equation*}
\begin{aligned}
|b_{T}|&=|[\![\beta_T^\pm \nabla(\Pi_T v_E^\pm)\cdot \mathbf{n}_h]\!]|\\
&\leq Ch_T^{-N/2}\|[\![\beta_T^\pm \nabla(\Pi_T v_E^\pm)\cdot \mathbf{n}_h]\!]\|_{L^2(T)}\\
&\leq Ch_T^{-N/2}\|[\![\beta_E^\pm \nabla(\Pi_T v_E^\pm)\cdot \mathbf{n}_h]\!]\|_{L^2(T)}+Ch_T^{-N/2}\|[\![(\beta_T^\pm-\beta_E^\pm) \nabla(\Pi_T v_E^\pm)\cdot \mathbf{n}_h]\!]\|_{L^2(T)}.
\end{aligned}
\end{equation*}
For the first term, by (\ref{n_h_esti}) and (\ref{pro_PI_1}), we can derive
\begin{equation*}
\begin{aligned}
 \|[\![\beta_E^\pm& \nabla (\Pi_h v_E^\pm )\cdot \textbf{n}_h]\!]\|_{L^2(T)}= \|[\![\beta_E^\pm \nabla (\Pi_h v_E^\pm -v_E^\pm )\cdot \textbf{n}_h+\beta_E^\pm \nabla v_E^\pm \cdot (\textbf{n}_h-\textbf{n}+\textbf{n})]\!]\|_{L^2(T)}\\
 &\leq \|[\![\beta_E^\pm \nabla (\Pi_h v_E^\pm -v_E^\pm )\cdot \textbf{n}_h]\!]\|_{L^2(T)}+\|\textbf{n}-\textbf{n}_h\|_{L^\infty(T)}\|[\![\beta_E^\pm \nabla v_E^\pm ]\!]\|_{L^2(T)}+ \|[\![ \beta_E^\pm\nabla v_E^\pm\cdot \textbf{n}]\!]\|_{L^2(T)}\\
 &\leq Ch_T\sum_{s=\pm}\left(|v_E^s|_{H^2(T)}+|v_E^s|_{H^1(T)}\right)+\|[\![ \beta_E^\pm\nabla v_E^\pm\cdot \textbf{n}]\!]\|_{L^2(T)}.
 \end{aligned}
\end{equation*}
Similarly, for the second term, it follows from (\ref{betaT}) and (\ref{pro_PI_3}) that 
\begin{equation*}
\begin{aligned}
 \|[\![(\beta_T^\pm-\beta_E^\pm)\nabla (\Pi_h v_E^\pm )\cdot \textbf{n}_h]\!]\|_{L^2(T)}&\leq \sum_{s=\pm}\|\beta_T^s-\beta_E^s\|_{L^\infty(T)}\|\nabla (\Pi_h v_E^s )\cdot \textbf{n}_h\|_{L^2(T)}\\
&\leq Ch_T\sum_{s=\pm}|\Pi_h v_E^s|_{H^1(T)}\\
&\leq Ch_T\sum_{s=\pm}|v_E^s|_{H^1(T)}.
\end{aligned}
\end{equation*}
Combining the above inequalities yields
\begin{equation}\label{pro_b1}
b_T^2\leq  Ch_T^{2-N}\sum_{s=\pm}\left(|v_E^s|^2_{H^2(T)}+|v_E^s|^2_{H^1(T)}\right)+Ch_T^{-N}\|[\![ \beta^\pm\nabla v_E^\pm\cdot \textbf{n}]\!]\|^2_{L^2(T)}.
\end{equation}

\textbf{Derive bounds for $c_{i,T}$.}
Using the property (\ref{sur_grad_h_ineq})  of  tangential gradients and the fact that $[\![ \nabla_{\Gamma_h} (\Pi_h v_E^\pm )]\!]$ is a constant vector on $T$, we have 
\begin{equation*}
\begin{aligned}
|c_{i,T}|&=\left|[\![ \nabla (\Pi_h v_E^\pm )\cdot \textbf{t}_{i,h}]\!]\right|\\
&\leq \left|[\![ \nabla_{\Gamma_h} (\Pi_h v_E^\pm )]\!]\right|\\
&\leq Ch_T^{-N/2}\left\|[\![ \nabla_{\Gamma_h} (\Pi_h v_E^\pm )]\!]\right\|_{L^2(T)}.
\end{aligned}
\end{equation*}
The triangle inequality gives 
\begin{equation*}
\begin{aligned}
\left\|[\![ \nabla_{\Gamma_h} (\Pi_h v_E^\pm )]\!]\right\|_{L^2(T)}&\leq \left\|[\![ \nabla_{\Gamma_h} (\Pi_h v_E^\pm-v_E^\pm )+(\nabla_{\Gamma_h}-\nabla_{\Gamma}+\nabla_{\Gamma})v_E^\pm]\!]\right\|_{L^2(T)}\\
&\leq \left\|[\![ \nabla_{\Gamma_h} (\Pi_h v_E^\pm-v_E^\pm )]\!]\right\|_{L^2(T)}+\left\|[\![\nabla_{\Gamma_h}v_E^\pm-\nabla_{\Gamma}v_E^\pm]\!]\right\|_{L^2(T)}+\left\|[\![\nabla_{\Gamma}v_E^\pm]\!]\right\|_{L^2(T)}.
\end{aligned}
\end{equation*}
Using the property of tangential gradients: $|\nabla_{\Gamma_h}\cdot |\leq |\nabla \cdot|$, we have
\begin{equation*}
\begin{aligned}
\left\|[\![ \nabla_{\Gamma_h} (\Pi_h v_E^\pm-v_E^\pm )]\!]\right\|_{L^2(T)}&\leq \left\|[\![ \nabla (\Pi_h v_E^\pm-v_E^\pm ) ]\!]\right\|_{L^2(T)}\\
&\leq Ch_T\sum_{s=\pm}|v_E^s|_{H^2(T)}.
\end{aligned}
\end{equation*}
By the definition (\ref{def_sur_grad}) and the inequality (\ref{n_h_esti}), we get
\begin{equation*}
\begin{aligned}
\left\|[\![\nabla_{\Gamma_h}v_E^\pm-\nabla_{\Gamma}v_E^\pm]\!]\right\|_{L^2(T)}&=\left\|[\![(\mathbf{n}\cdot \nabla v_E^\pm)\mathbf{n}-(\mathbf{n}_h\cdot \nabla v_E^\pm)\mathbf{n}_h]\!]\right\|_{L^2(T)}\\
&=\left\|[\![(\mathbf{n}\cdot \nabla v_E^\pm)(\mathbf{n}-\mathbf{n}_h)+((\mathbf{n}-\mathbf{n}_h)\cdot \nabla v_E^\pm)\mathbf{n}_h]\!]\right\|_{L^2(T)}\\
&\leq 2\|\mathbf{n}-\mathbf{n}_h\|_{L^\infty(T)}\left\|[\![\nabla v_E^\pm]\!]\right\|_{L^2(T)}\\
&\leq Ch_T\sum_{s=\pm}|v_E^s|_{H^1(T)}.
\end{aligned}
\end{equation*}
Collecting the above inequalities yields 
\begin{equation}\label{est_b2}
c_{i,T}^2\leq Ch_T^{2-N}\sum_{s=\pm}\left(|v_E^s|^2_{H^1(T)}+|v_E^s|^2_{H^2(T)}\right)+Ch_T^{-N}\left\|[\![\nabla_{\Gamma}v_E^\pm]\!]\right\|_{L^2(T)}.
\end{equation}

\textbf{Derive bounds for $g_{i,T}$.}
By the definitions (\ref{def_Nit}), (\ref{def_BK}) and (\ref{def_BK2}), there hold
\begin{equation*}
\mathcal{N}_{i,T}(\Pi_T^{BK}v)=|F_i|^{-1}\sum_{s=\pm}\int_{F_{i}\cap \partial T_h^s}\Pi_Tv_E^s~~\mbox{ and }~~\mathcal{N}_{i,T}(E^{BK}_hv)=|F_i|^{-1}\sum_{s=\pm}\int_{F_{i}\cap \partial T_h^s}v_E^s.
\end{equation*}
Now we can estimate $c_{i,T}$ as
\begin{equation*}
\begin{aligned}
|g_{i,T}|&=\left|\mathcal{N}_{i,T}(\Pi_T^{BK}v-E^{BK}_hv)\right|\\
&=|F_i|^{-1}\left|\sum_{s=\pm}\int_{F_{i}\cap \partial T_h^s}(\Pi_Tv_E^s-v_E^s)\right|\\
&\leq |F_i|^{-1}\sum_{s=\pm}\int_{F_{i}}\left|\Pi_Tv_E^s-v_E^s\right|\\
&\leq|F_i|^{-1/2} \sum_{s=\pm}\left|\Pi_Tv_E^s-v_E^s\right|_{L^2(F_i)}\\
&\leq |F_i|^{-1/2} \sum_{s=\pm}\left(Ch_T^{-1/2}\left\|\Pi_Tv_E^s-v_E^s\right\|_{L^2(T)} +Ch_T^{1/2}\left|\Pi_Tv_E^s-v_E^s\right|_{H^1(T)} \right)\\
&\leq C|F_i|^{-1/2} h_T^{3/2}\sum_{s=\pm}|v_E^s|_{H^2(T)},
\end{aligned}
\end{equation*}
where we have used the Cauchy-Schwarz inequality, the standard trace inequality, and the  estimate  (\ref{pro_PI_1}). Since we assume the triangulation is shape-regular,  we have $|F_i|\geq Ch_T^{N-1}$, and then,
\begin{equation}\label{est_c}
\begin{aligned}
g_{i,T}^2\leq Ch_T^{4-N}\sum_{s=\pm}|v_E^s|^2_{H^2(T)}.
\end{aligned}
\end{equation}

Substituting the estimates (\ref{pro_at})-(\ref{est_c}) into (\ref{pro_zong_abc}), we obtain 
\begin{equation*}
\begin{aligned}
|\Pi_T v_E^\pm-\left(\Pi_T^{\rm IFE}v\right)^\pm|^2_{H^m(T)}&\leq Ch_T^{4-2m}\sum_{s=\pm}\left(|v_E^s|^2_{H^1(T)}+|v_E^s|^2_{H^2(T)}\right)\\
&+Ch_T^{2-2m}\left(\left\|[\![\beta_E^\pm\nabla v_E^\pm\cdot\mathbf{n}]\!]\right\|^2_{L^2(T)}+\left\|[\![\nabla_\Gamma v_E^\pm]\!]\right\|^2_{L^2(T)}\right),
\end{aligned}
\end{equation*}
which together with the standard estimate 
$$
 |v_E^\pm -\Pi_Tv_E^\pm |^2_{H^m(T)}\leq Ch^{4-2m}|v^\pm|^2_{H^2(T)}
$$
yields the desired result (\ref{T_inter}). 
\end{proof}

\section{The finite element method and analysis}\label{sec_IFEM}
\subsection{The method}\label{sec_method}
For each $F\in \mathcal{F}^\Gamma_h$, let $T_1^F$ and $T_2^F$ be two elements sharing the common face $F$. Define the space
\begin{equation*}
\mathbf{Q}_F=\{\mathbf{q}\in L^2(\Omega)^N:~ \mathbf{q}|_{T_i^F}\in \nabla S_h(T_i^F), i=1,2,~ \mathbf{q}|_{\Omega\backslash(T^F_1\cup T^F_2)}=\mathbf{0}\},
\end{equation*}
where $\nabla S_h(T)=\{\nabla v_h:  v_h\in S_h(T) \}.$
The local  {\em  lifting operator}  $\mathbf{r}_F : L^2(F)\rightarrow \mathbf{Q}_F$ is defined by
\begin{equation}\label{def_lift}
\int_{T_1^F\cup T_2^F} \beta^{BK} \mathbf{r}_F(v)\cdot \mathbf{q}=\int_F\{\beta^{BK} \mathbf{q}\cdot \textbf{n}_F\}_Fv \qquad \forall \mathbf{q}\in \mathbf{Q}_F,
\end{equation}
where $\beta^{BK}(\mathbf{x}):=E_h^{BK}\beta(\mathbf{x})$ and $\{\cdot\}_F$ stands for the average over $F$, i.e., $\{v\}_F=(v|_{T^F_1}+v|_{T^F_2})/2$.  Define the IFE space $V_{h,0}^{\rm IFE}=\{v\in V_{h}^{\rm IFE} :  \int_F v=0~ \forall F\in\mathcal{F}_h^b\}$ and the following bilinear forms:
\begin{equation}\label{def_AAH}
\begin{aligned}
&a_h(z,v):=\int_\Omega \beta^{BK} \nabla_h z\cdot\nabla_h v,\\
&b_h(z,v):=-\sum_{F\in\mathcal{F}_h^\Gamma}\int_F\left(\{\beta^{BK}\nabla_h z\cdot\textbf{n}_F\}_F[v]_F+\{\beta^{BK}\nabla_h v\cdot\textbf{n}_F\}_F[z]_F\right),\\
&s_h(z,v):=8\sum_{F\in\mathcal{F}_h^\Gamma}\int_{T_1^F\cup T_2^F}\beta^{BK}\mathbf{r}_F([z]_F)\cdot \mathbf{r}_F([v]_F),\\
&A_h(z,v):=a_h(z,v)+b_h(z,v)+s_h(z,v),
\end{aligned}
\end{equation}
where $(\nabla_h v)|_{T}=\nabla v|_T$ for all $T\in \mathcal{T}_h^{non}$ and $(\nabla_h v)|_{T_h^\pm}=\nabla v|_{T_h^\pm}$ for all $T\in \mathcal{T}_h^{\Gamma}$.
The immersed finite method reads: find $u_h\in V_{h,0}^{\rm IFE}$ such that
\begin{equation}\label{method2}
A_h(u_h,v_h)=\int_{\Omega}f^{BK}v_h \qquad\forall v_h\in V_{h,0}^{\rm IFE},
\end{equation}
where $f^{BK}:=E_h^{BK}f$. Recalling the definition of $E_h^{BK}$ in (\ref{def_BK}), the function $f^{BK}$ relies on the extensions $f_E^+$ and $f_E^-$. Since $f^\pm\in L^2(\Omega^\pm)$, we can simply use the trivial extension of $f^\pm$ to satisfy (\ref{extension}) (i.e., $f_E^\pm=0$ outside $\Omega^\pm$).

Clearly, the method is symmetric and does not require a manually chosen stabilization parameter. We note that the term $b_h(\cdot,\cdot)$ is crucial to ensure the optimal convergence (see \cite{2021ji_nonconform}). Comparing with the traditional Crouzeix-Raviart finite element method, the additional terms $b_h(\cdot,\cdot)$ and $s_h(\cdot,\cdot)$ are only evaluated on interface faces, and thus the extra computational cost is not significant in general. We also note that we do not need to solve linear systems for the lifting $\mathbf{r}_F(v)$ for a given function $v\in L^2(F)$.  Using the fact that $\nabla S_h(T)=\mbox{ span}\{\boldsymbol{\eta},~ \mathbf{t}_{i,h}, i=1,..., N-1\}$, where $\boldsymbol{\eta}|_{T_h^\pm}=\beta^\mp_T\mathbf{n}_h$, the lifting $\mathbf{r}_F(v)$  has an explicit formula (see \cite{2021ji_IFE}).

\subsection{Continuity and coercivity}
Define the mesh-dependent norms $\|\cdot\|_h$ and $\interleave \cdot \interleave_h$ by
\begin{equation}\label{def_nor2}
\begin{aligned}
\|v\|^2_{h}&=a_h(v,v),\\
\interleave v \interleave_h^2&=\|v\|_{h}^2+\sum_{F\in\mathcal{F}_h^\Gamma}\left(h_F\|\{\beta^{BK}\nabla_h v\}_F\|^2_{L^2(F)}+h_F^{-1}\| [v]_F\|^2_{L^2(F)}\right)+s_h(v,v),
\end{aligned}
\end{equation}
where $h_F$ denotes the diameter of $F$. Using the Poincar\'e-Friedrichs  inequalities for piecewise $H^1$ functions (see \cite{brenner2003poincare}), we have 
\begin{equation}\label{poin_ineq}
\|v\|^2_{L^2(\Omega)}\leq C\sum_{T\in\mathcal{T}_h}|v_h|^2_{H^1(T)} \leq C \|v\|^2_{h}\qquad\forall v\in \left(H_0^1(\Omega)\cap\widetilde{H}^2(\Omega)\right)+ V_{h,0}^{\rm IFE},
\end{equation}
which implies that $\| \cdot \|_h$ and $\interleave\cdot\interleave_h$ are indeed norms on the space $\left(H_0^1(\Omega)\cap\widetilde{H}^2(\Omega)\right)+ V_{h,0}^{\rm IFE}$.
It follows from  the Cauchy-Schwarz inequality that $A_h(\cdot,\cdot)$ is bounded by $\interleave\cdot\interleave_h$, i.e.,
\begin{equation}\label{conti}
|A_h(z,v)|\leq \interleave z \interleave_h \interleave v\interleave_h.
\end{equation}
The following lemma shows that  $A_h(\cdot,\cdot)$  is coercive on the IFE space $V_{h,0}^{\rm IFE}$ with respect to  $\|\cdot\|_{h}$.
\begin{lemma}\label{lem_Coercivity}
It holds that
\begin{equation}\label{Coercivity}
A_h(v,v)\geq \frac{1}{2}\|v\|_{h}^2\qquad \forall v\in V_{h,0}^{\rm IFE}.
\end{equation}
\end{lemma}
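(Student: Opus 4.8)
The starting point is the decomposition $A_h(v,v)=a_h(v,v)+b_h(v,v)+s_h(v,v)$ together with $\|v\|_h^2=a_h(v,v)$ from (\ref{def_nor2}); hence (\ref{Coercivity}) is equivalent to
\begin{equation*}
\tfrac12\,a_h(v,v)+b_h(v,v)+s_h(v,v)\ge 0\qquad\forall v\in V_{h,0}^{\rm IFE}.
\end{equation*}
The plan is to absorb the consistency term $b_h(v,v)$ into $\tfrac12 a_h(v,v)$ and $s_h(v,v)$ by rewriting it through the lifting operator. Taking $z=v$ in the definition of $b_h$ in (\ref{def_AAH}) gives $b_h(v,v)=-2\sum_{F\in\mathcal{F}_h^\Gamma}\int_F\{\beta^{BK}\nabla_h v\cdot\mathbf{n}_F\}_F[v]_F$. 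For $F\in\mathcal{F}_h^\Gamma$ the two elements $T_1^F,T_2^F$ sharing $F$ are themselves interface elements, so $v|_{T_i^F}\in S_h(T_i^F)$ and the vector field that equals $\nabla_h v$ on $T_1^F\cup T_2^F$ and $\mathbf{0}$ elsewhere lies in $\mathbf{Q}_F$. Inserting it as $\mathbf{q}$ in (\ref{def_lift}), with $[v]_F$ playing the role of the scalar argument, turns $b_h(v,v)$ into a volume expression:
\begin{equation*}
b_h(v,v)=-2\sum_{F\in\mathcal{F}_h^\Gamma}\int_{T_1^F\cup T_2^F}\beta^{BK}\,\mathbf{r}_F([v]_F)\cdot\nabla_h v .
\end{equation*}

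Next I would estimate this by the Cauchy--Schwarz inequality — first elementwise with the positive weight $\beta^{BK}$ (positivity from (\ref{beta_ext1})), then in $\ell^2$ over the faces:
\begin{equation*}
\begin{aligned}
|b_h(v,v)|&\le 2\Big(\sum_{F\in\mathcal{F}_h^\Gamma}\int_{T_1^F\cup T_2^F}\beta^{BK}|\mathbf{r}_F([v]_F)|^2\Big)^{1/2}\\
&\quad\times\Big(\sum_{F\in\mathcal{F}_h^\Gamma}\int_{T_1^F\cup T_2^F}\beta^{BK}|\nabla_h v|^2\Big)^{1/2}.
\end{aligned}
\end{equation*}
A finite-overlap count controls the second factor: every element has exactly $N+1$ faces, hence occurs at most $N+1$ times among the sets $T_1^F\cup T_2^F$, $F\in\mathcal{F}_h^\Gamma$, so that $\sum_{F\in\mathcal{F}_h^\Gamma}\int_{T_1^F\cup T_2^F}\beta^{BK}|\nabla_h v|^2\le(N+1)\,a_h(v,v)$. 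Writing $X^2:=\sum_{F\in\mathcal{F}_h^\Gamma}\int_{T_1^F\cup T_2^F}\beta^{BK}|\mathbf{r}_F([v]_F)|^2$ and $Y^2:=a_h(v,v)$, we then have $s_h(v,v)=8X^2$ and $|b_h(v,v)|\le 2\sqrt{N+1}\,XY$.

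It remains to verify the scalar inequality $\tfrac12 Y^2-2\sqrt{N+1}\,XY+8X^2\ge 0$. Viewed as a quadratic form in $(X,Y)$, it is nonnegative precisely when $8\cdot\tfrac12-(N+1)=4-(N+1)\ge 0$, i.e.\ for $N\le 3$; in the borderline case $N=3$ the left-hand side equals $\tfrac12(Y-4X)^2$. This is exactly where the stabilization constant $8$ in $s_h$ is used: it is the value for which the absorption closes simultaneously in 2D and 3D while still delivering the coercivity constant $\tfrac12$. I expect the only genuinely delicate points to be the two bookkeeping facts invoked above — that $\nabla_h v$ restricted to $T_1^F\cup T_2^F$ is an admissible test field in $\mathbf{Q}_F$, and that the overlap constant in the second factor is at most $N+1$ — since these are precisely what must be matched against the fixed weight $8$; everything else is a routine Cauchy--Schwarz estimate. (Carrying an explicit Young parameter in the face-sum instead of plain Cauchy--Schwarz would also work but would not improve the constant.)
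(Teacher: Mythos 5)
Your proof is correct and follows essentially the same route as the paper: rewrite $b_h(v,v)$ through the lifting operator by testing (\ref{def_lift}) with $\mathbf{q}=\nabla_h v$ on $T_1^F\cup T_2^F$, apply Cauchy--Schwarz, count the overlap of the face-patches, and absorb into $\tfrac12 a_h(v,v)+s_h(v,v)$. The only cosmetic differences are that the paper uses the uniform overlap bound $4$ (valid for both $N=2,3$) and a Young-type inequality, whereas you keep the sharper count $N+1$ and check nonnegativity of the resulting quadratic form directly; both yield the same constant $\tfrac12$.
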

\begin{proof}
For all $v\in V_{h,0}^{\rm IFE}$, choosing $\mathbf{q}|_{T_{i}^F}=\nabla v|_{T_{i}^F}$, $i=1,2$ and  $\mathbf{q}|_{\Omega\backslash(T_{1}^F\cup T_{2}^F) }=\mathbf{0}$ in (\ref{def_lift}) yields
\begin{equation*}
\int_{T_{1}^F\cup T_{2}^F} \beta^{BK} \mathbf{r}_F([v]_F)\cdot \nabla v =\int_F \{ \beta^{BK} \nabla v \cdot \textbf{n}_F \}_F [v]_F.
\end{equation*}
Then we have
\begin{equation*}
\begin{aligned}
b_h(v,v)=-2\sum_{F\in\mathcal{F}_h^\Gamma}\int_F\{\beta^{BK}\nabla v\cdot\textbf{n}_F\}_F[v]_F=-2\sum_{F\in\mathcal{F}_h^\Gamma}\int_{T_{1}^F\cup T_{2}^F} \beta^{BK} \mathbf{r}_F([v]_F)\cdot \nabla v.
\end{aligned}
\end{equation*}
It follows from the Cauchy-Schwarz inequality that 
\begin{equation*}
\begin{aligned}
|b_h(v,v)|\leq \left(4\sum_{F\in\mathcal{F}_h^\Gamma} \int_{T_1^F\cup T_2^F} \beta^{BK} \mathbf{r}_F([v]_F)\cdot \mathbf{r}_F([v]_F)\right)^{1/2}\left(\sum_{F\in\mathcal{F}_h^\Gamma} \int_{T_{1}^F\cup T_{2}^F} \beta^{BK} \nabla v \cdot \nabla v\right)^{1/2}.
\end{aligned}
\end{equation*}
Since each element is calculated at most $N+1$ times, it holds for both $N=2$ and $N=3$ that
\begin{equation*}
\sum_{F\in\mathcal{F}_h^\Gamma} \int_{T_{1}^F\cup T_{2}^F} \beta^{BK} \nabla v \cdot \nabla v \leq 4\sum_{T\in\mathcal{T}_h}\int_T \beta^{BK} \nabla v \cdot \nabla v.
\end{equation*}
Therefore, we have
\begin{equation*}
\begin{aligned}
\left|b_h(v,v)\right|&\leq \left(\frac{1}{2}s_h(v,v)\right)^{1/2}\left(4\sum_{T\in\mathcal{T}_h}\int_T \beta^{BK} \nabla v \cdot \nabla v \right)^{1/2}\\
&\leq s_h(v,v)+\frac{1}{2} \sum_{T\in\mathcal{T}_h}\int_T \beta^{BK} \nabla v \cdot \nabla v,
\end{aligned}
\end{equation*}
 which leads to
\begin{equation*}
a_h(v,v)+b_h(v,v)+s_h(v,v)\geq \frac{1}{2} \sum_{T\in\mathcal{T}_h}\int_T \beta^{BK} \nabla v \cdot \nabla v  = \frac{1}{2}\|v\|_{h}^2.
\end{equation*}
This completes the proof of this lemma.
\end{proof}

\subsection{Norm-equivalence for IFE functions}
We show that  the norms $\|\cdot\|_{h}$ and $ \interleave\cdot\interleave_h$ are equivalent on the IFE space $V_{h,0}^{\rm IFE}$. To this end, we first prove the  trace inequality for IFE functions in the following lemma.
\begin{lemma}[Trace inequality]\label{lema_trace}
There exists a  constant $C$ independent of $h$ and the interface location relative to the mesh such that
\begin{equation}\label{trace_IFE}
\|\nabla v\|_{L^2(\partial T)}\leq Ch_T^{-1/2}\|\nabla v\|_{L^2(T)}~\quad \forall v\in S_h(T)\quad\forall T\in\mathcal{T}_h^\Gamma.
\end{equation}
\end{lemma}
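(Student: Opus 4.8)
The plan is to reduce the trace inequality on $S_h(T)$ to the corresponding scaled trace inequality for polynomials on the two pieces $T_h^+$ and $T_h^-$, using the explicit structure of IFE functions established earlier. First I would observe that for $v\in S_h(T)$ the gradient $\nabla v$ is piecewise constant: $\nabla v|_{T_h^\pm}=\nabla v^\pm$ with $v^\pm\in\mathbb{P}_1(T)$ the polynomial extensions. Hence $\|\nabla v\|_{L^2(\partial T)}^2=\sum_{s=\pm}|\nabla v^s|^2\,|\partial T\cap \partial T_h^s|$ and $\|\nabla v\|_{L^2(T)}^2=\sum_{s=\pm}|\nabla v^s|^2\,|T_h^s|$, so the claimed inequality would follow immediately \emph{if} we knew $|T_h^s|\geq c\,h_T\,|\partial T\cap\partial T_h^s|$; but this fails when, say, $|T_h^-|\to 0$, because then the boundary piece can still have measure comparable to $h_T^{N-1}$ while $|T_h^-|\to 0$. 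So the naive piecewise estimate is not robust, and the coupling between $v^+$ and $v^-$ through the jump conditions must be exploited.

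The key step is therefore to control the gradient on the \emph{small} side by the gradient on the large side. Using Remark~\ref{remark_th}, the interface conditions defining $S_h(T)$ give $[\![\nabla v^\pm\cdot\mathbf{t}_{i,h}]\!]=0$ for $i=1,\dots,N-1$ and $\beta_T^+\,\nabla v^+\cdot\mathbf{n}_h=\beta_T^-\,\nabla v^-\cdot\mathbf{n}_h$, so $\nabla v^+$ and $\nabla v^-$ differ only in the $\mathbf{n}_h$-component, and that component is controlled by a fixed ratio $\beta_T^-/\beta_T^+$ bounded above and below by constants depending only on $\beta_T^\pm$. Consequently $|\nabla v^+|\simeq|\nabla v^-|$ with equivalence constants depending only on $\beta_T^\pm$. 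Writing $|\nabla v|^2_{T}:=|\nabla v^+|^2$ (say), we then have $\|\nabla v\|_{L^2(\partial T)}^2\leq C|\nabla v|^2_T\,h_T^{N-1}$ trivially, so it remains to bound $|\nabla v|_T^2\,h_T^{N-1}$ from below... no: we need an \emph{upper} bound on $\|\nabla v\|_{L^2(\partial T)}^2$ by $h_T^{-1}\|\nabla v\|_{L^2(T)}^2$, i.e. we need $\|\nabla v\|_{L^2(T)}^2\geq c\,h_T\,|\nabla v|_T^2\,h_T^{N-1}=c\,|\nabla v|_T^2\,h_T^N$. Since $|T|\geq c\,h_T^N$ by shape-regularity and $\|\nabla v\|_{L^2(T)}^2=|\nabla v^+|^2|T_h^+|+|\nabla v^-|^2|T_h^-|\geq \min(|\nabla v^+|^2,|\nabla v^-|^2)\,|T|\geq c\,|\nabla v|_T^2\,h_T^N$, we are done. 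The gradient-equivalence across the interface is exactly what removes the dependence on $|T_h^\pm|$.

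So the proof outline is: (i) recall $\nabla v$ is piecewise constant on $T_h^\pm$ with values $\nabla v^\pm$; (ii) use the tangential-continuity and flux-continuity conditions from Remark~\ref{remark_th} to show $c_1|\nabla v^-|\leq|\nabla v^+|\leq c_2|\nabla v^-|$ with $c_i=c_i(\beta_T^\pm)$; (iii) bound $\|\nabla v\|_{L^2(\partial T)}^2\leq C\,h_T^{N-1}\max_s|\nabla v^s|^2\leq C'\,h_T^{N-1}\min_s|\nabla v^s|^2$; (iv) bound $\|\nabla v\|_{L^2(T)}^2\geq \min_s|\nabla v^s|^2\,|T|\geq c\,h_T^N\min_s|\nabla v^s|^2$ using shape-regularity; (v) combine to get $\|\nabla v\|_{L^2(\partial T)}^2\leq C\,h_T^{-1}\|\nabla v\|_{L^2(T)}^2$. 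I expect the main (and essentially only) subtlety to be step (ii): making precise that $\nabla v^+-\nabla v^-$ is parallel to $\mathbf{n}_h$ with the scalar proportionality factor bounded in terms of $\beta_T^\pm$, so that the two gradient magnitudes are equivalent \emph{uniformly in the interface position}; everything else is routine scaling. An equivalent and perhaps cleaner route for step (ii)–(iv) is to invoke the explicit basis representation (\ref{express_IFE_basis}) together with the basis bounds in Theorem~\ref{lem_est_IFEbasis}: writing $v=\sum_i \mathcal{N}_{i,T}(v)\phi_{i,T}$ and using $\|\phi_{i,T}^\pm\|_{W^1_\infty(T)}\leq Ch_T^{-1}$ one directly gets $\|\nabla v\|_{L^2(\partial T)}\leq C h_T^{(N-1)/2} h_T^{-1}\max_i|\mathcal{N}_{i,T}(v)|$ and, via a standard equivalence of norms on the finite-dimensional space together with the uniform lower bound on $|T|$, $\|\nabla v\|_{L^2(T)}\geq c h_T^{N/2}h_T^{-1}\max_i|\mathcal{N}_{i,T}(v)|$, again yielding the claim with constants depending only on $\varrho$ and $\beta_T^\pm$.
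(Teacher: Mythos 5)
Your main argument (steps (i)--(v)) is correct and is essentially the paper's own proof: the paper likewise uses the decomposition $\nabla v^\pm=(\mathbf{n}_h\cdot\nabla v^\pm)\mathbf{n}_h+\nabla_{\Gamma_{h,T}}v^\pm$ together with the jump conditions $\mathbf{n}_h\cdot\nabla v^+=(\beta_T^-/\beta_T^+)\,\mathbf{n}_h\cdot\nabla v^-$ and $\nabla_{\Gamma_{h,T}}v^+=\nabla_{\Gamma_{h,T}}v^-$ to obtain $\min\{\beta_T^-/\beta_T^+,1\}\|\nabla v^-\|_{L^2(D)}\le\|\nabla v^+\|_{L^2(D)}\le\max\{\beta_T^-/\beta_T^+,1\}\|\nabla v^-\|_{L^2(D)}$ on every subdomain $D\subset T$, and then concludes with the standard scaled trace bound for each polynomial piece, which is exactly your scaling steps (iii)--(iv). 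One caution on the ``cleaner'' alternative sketched at the end: the claimed lower bound $\|\nabla v\|_{L^2(T)}\ge c\,h_T^{N/2-1}\max_i|\mathcal{N}_{i,T}(v)|$ is false as stated (take $v$ constant, so all $\mathcal{N}_{i,T}(v)$ are nonzero while $\nabla v=0$); it would have to be phrased in terms of differences of the degrees of freedom, so your primary argument, which needs no such repair, is the one to keep.
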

\begin{proof}
By the definition of $S_h(T)$ (see also Remark~\ref{remark_th}), there holds
$$\mathbf{n}_h \cdot \nabla v^+=(\beta_T^-/\beta_T^+)\mathbf{n}_h \cdot \nabla v^-,\qquad \nabla_{\Gamma_{h,T}} v^+=\nabla_{\Gamma_{h,T}} v^-,$$
which together with the decomposition 
$\nabla v^\pm=(\mathbf{n}_h \cdot \nabla v^\pm)\mathbf{n}_h+\nabla_{\Gamma_{h,T}} v^\pm$ (see (\ref{def_sur_grad}))
yields
\begin{equation*}
\min\left\{\beta_T^-/\beta_T^+,1\right\}\|\nabla v^-\|_{L^2(D)} \leq \|\nabla v^+\|_{L^2(D)}\leq \max\left\{\beta_T^-/\beta_T^+,1\right\}\|\nabla v^-\|_{L^2(D)}
\end{equation*}
for any subdomain $D\subset T$. Using the above inequalities we can derive 
\begin{equation*}
\begin{aligned}
\|\nabla v\|^2_{L^2(\partial T)}&\leq \sum_{s=\pm}\|\nabla v^s\|^2_{L^2(\partial T)}\leq \sum_{s=\pm}Ch_T^{-1}\|\nabla v^s\|^2_{L^2(T)}\leq Ch_T^{-1}\|\nabla v\|^2_{L^2(T)},
\end{aligned}
\end{equation*}
which completes the proof of this lemma.
\end{proof}
With the trace inequality we can derive the following stability estimate for the operator $\mathbf{r}_F$.
\begin{lemma}\label{lem_stab_lift}
There exists a  constant $C$ independent of $h$ and the interface location relative to the mesh such that
\begin{equation*}
\|\mathbf{r}_F(v)\|_{L^2(\Omega)}\leq Ch_F^{-1/2}\|v\|_{L^2(F)}\quad \forall v\in L^2(F)\quad \forall F\in\mathcal{F}_h^\Gamma.
\end{equation*}
\end{lemma}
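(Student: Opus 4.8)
The plan is to test the defining relation (\ref{def_lift}) of the lifting operator against $\mathbf{q}=\mathbf{r}_F(v)$ itself and to absorb the resulting face term using the interface-robust trace inequality of Lemma~\ref{lema_trace}. If $\mathbf{r}_F(v)=\mathbf{0}$ the estimate is trivial, so I would assume $\mathbf{r}_F(v)\neq\mathbf{0}$. Since $\mathbf{r}_F(v)\in\mathbf{Q}_F$ is an admissible test function in (\ref{def_lift}), choosing $\mathbf{q}=\mathbf{r}_F(v)$ gives
\begin{equation*}
\int_{T_1^F\cup T_2^F}\beta^{BK}\,\mathbf{r}_F(v)\cdot\mathbf{r}_F(v)=\int_F\{\beta^{BK}\,\mathbf{r}_F(v)\cdot\mathbf{n}_F\}_F\,v.
\end{equation*}

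First I would bound the left-hand side from below: since $\beta^{BK}=E_h^{BK}\beta$ is bounded below by a positive constant (see (\ref{beta_ext1})) and $\mathbf{r}_F(v)$ vanishes outside $T_1^F\cup T_2^F$, the left-hand side is at least $c\,\|\mathbf{r}_F(v)\|^2_{L^2(\Omega)}$. Next I would bound the right-hand side from above. The Cauchy--Schwarz inequality on $F$, the upper bound on $\beta^{BK}$ from (\ref{beta_ext1}), and $|\mathbf{n}_F|=1$ give
\begin{equation*}
\left|\int_F\{\beta^{BK}\,\mathbf{r}_F(v)\cdot\mathbf{n}_F\}_F\,v\right|\leq C\Big(\sum_{i=1,2}\big\|\mathbf{r}_F(v)|_{T_i^F}\big\|_{L^2(F)}\Big)\|v\|_{L^2(F)},
\end{equation*}
where the trace on $F$ of the (piecewise affine, possibly discontinuous across $\Gamma_{h,T}$) field $\mathbf{r}_F(v)$ is understood element by element.

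Because $\mathbf{r}_F(v)|_{T_i^F}\in\nabla S_h(T_i^F)$, I can write $\mathbf{r}_F(v)|_{T_i^F}=\nabla w_i$ with $w_i\in S_h(T_i^F)$, so Lemma~\ref{lema_trace} applies and yields
\begin{equation*}
\big\|\mathbf{r}_F(v)|_{T_i^F}\big\|_{L^2(F)}\leq\|\nabla w_i\|_{L^2(\partial T_i^F)}\leq Ch_{T_i^F}^{-1/2}\|\nabla w_i\|_{L^2(T_i^F)}=Ch_{T_i^F}^{-1/2}\|\mathbf{r}_F(v)\|_{L^2(T_i^F)},
\end{equation*}
and since $h_F\leq h_{T_i^F}$ this is bounded by $Ch_F^{-1/2}\|\mathbf{r}_F(v)\|_{L^2(\Omega)}$. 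Combining the lower and upper bounds yields $c\,\|\mathbf{r}_F(v)\|^2_{L^2(\Omega)}\leq Ch_F^{-1/2}\|\mathbf{r}_F(v)\|_{L^2(\Omega)}\|v\|_{L^2(F)}$, and dividing by $\|\mathbf{r}_F(v)\|_{L^2(\Omega)}$ finishes the proof.

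I do not expect a genuine obstacle: the one nontrivial ingredient, a trace inequality with constant independent of the interface location, is precisely Lemma~\ref{lema_trace}, and the two-sided bounds on $\beta^{BK}$ are (\ref{beta_ext1}); the only point needing a little care is that $\mathbf{r}_F(v)$ is discontinuous across $\Gamma_{h,T}$, which forces the elementwise treatment of its trace on $F$ — exactly the form in which Lemma~\ref{lema_trace} is stated. If, in a degenerate configuration, an element adjacent to $F$ fails to be an interface element, the same estimate follows instead from the classical trace inequality for affine vector fields, so the argument is uniform over $F\in\mathcal{F}_h^\Gamma$.
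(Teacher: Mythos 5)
Your proposal is correct and follows essentially the same route as the paper: test (\ref{def_lift}) with $\mathbf{q}=\mathbf{r}_F(v)$, use the two-sided bounds on $\beta^{BK}$, and absorb the face traces $\|\mathbf{r}_F(v)|_{T_i^F}\|_{L^2(F)}$ via the interface-robust trace inequality of Lemma~\ref{lema_trace} applied to $\mathbf{r}_F(v)|_{T_i^F}\in\nabla S_h(T_i^F)$, then divide by $\|\mathbf{r}_F(v)\|_{L^2(\Omega)}$. The extra remarks (elementwise treatment of the trace, the non-interface-element case) are harmless refinements of the same argument.
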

\begin{proof}
Choosing  $\mathbf{q}=\mathbf{r}_F(v)$ in (\ref{def_lift}) yields 
\begin{equation*}
\begin{aligned}
\|\mathbf{r}_F(v)\|^2_{L^2(T_1^F\cup T_2^F)}&\leq C\|v\|_{L^2(F)}(\|\mathbf{r}_F(v)|_{T_1^F}\|_{L^2(F)}+\|\mathbf{r}_F(v)|_{T_2^F}\|_{L^2(F)})\\
&\leq Ch_F^{-1/2}\|v\|_{L^2(F)}\|\mathbf{r}_F(v)\|_{L^2(T_1^F\cup T_2^F)},
\end{aligned}
\end{equation*}
where in the last inequality we have used the trace inequality (\ref{trace_IFE}) since $\mathbf{r}_F(v)|_{T_i^F}\in\nabla S_h(T_i^F)$. Using the above inequality and the fact $\mathbf{r}_F(v)|_{\Omega\backslash(T^F_1\cup T^F_2)}=0$ we completes the proof of this lemma.
\end{proof}

Since $v|_T\in H^1(T)$ and $\int_F[v]_F=0$ for all $v\in V_{h,0}^{\rm IFE}$, we have the following standard result 
\begin{equation*}
\|[v]_F\|_{L^2(F)}^2\leq Ch_F(|v|^2_{H^1(T_1^F)}+|v|^2_{H^1(T_2^F)}) \quad \forall F\in \mathcal{F}_h \quad\forall v\in V_{h,0}^{\rm IFE}.
\end{equation*}
Combining this with the definition (\ref{def_nor2}) and Lemmas \ref{lema_trace} and \ref{lem_stab_lift}, we can easily obtain the norm-equivalence as shown in the following lemma.
\begin{lemma}\label{lema_equ}
There exists a constant $C$ independent of $h$ and the interface location relative to the mesh such that
\begin{equation*}
\|v\|_{h} \leq \interleave v \interleave_h\leq C\|v\|_{h}\qquad \forall v\in V_{h,0}^{\rm IFE}.
\end{equation*}
\end{lemma}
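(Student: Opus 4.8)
The plan is to prove the two inequalities separately. The lower bound $\|v\|_h\le\interleave v\interleave_h$ is immediate from the definition (\ref{def_nor2}): the three extra terms $h_F\|\{\beta^{BK}\nabla_h v\}_F\|^2_{L^2(F)}$, $h_F^{-1}\|[v]_F\|^2_{L^2(F)}$ and $s_h(v,v)$ are all nonnegative. So the whole content lies in the reverse estimate, and I would bound each of these three terms by $C\|v\|_h^2=Ca_h(v,v)$ with a constant independent of $h$ and of the interface location, then add them up.

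For the flux term, fix $F\in\mathcal{F}_h^\Gamma$ with neighbours $T_1^F,T_2^F$ (both of which are interface elements under Assumption~\ref{assum_2}, so that Lemma~\ref{lema_trace} applies). Using the uniform upper bound on $\beta^{BK}$ from (\ref{beta_ext1}) and then the trace inequality for IFE functions, I would obtain $h_F\|\{\beta^{BK}\nabla_h v\}_F\|^2_{L^2(F)}\le Ch_F\sum_{i=1,2}\|\nabla_h v\|^2_{L^2(\partial T_i^F)}\le C\sum_{i=1,2}h_F h_{T_i^F}^{-1}\|\nabla_h v\|^2_{L^2(T_i^F)}\le C\sum_{i=1,2}\|\nabla_h v\|^2_{L^2(T_i^F)}$, the last step by shape-regularity. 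Summing over $F\in\mathcal{F}_h^\Gamma$, using that each element has at most $N+1$ faces, and invoking the uniform lower bound on $\beta^{BK}$ gives a bound by $Ca_h(v,v)$.

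For the jump term I would use the standard estimate $\|[v]_F\|_{L^2(F)}^2\le Ch_F(|v|_{H^1(T_1^F)}^2+|v|_{H^1(T_2^F)}^2)$ recalled just before the lemma, which is valid because $v|_T\in H^1(T)$ for every $v\in V_{h,0}^{\rm IFE}$ (the IFE functions are continuous across $\Gamma_{h,T}$) and $\int_F[v]_F=0$; this yields $h_F^{-1}\|[v]_F\|^2_{L^2(F)}\le C(|v|^2_{H^1(T_1^F)}+|v|^2_{H^1(T_2^F)})$, and summation together with the lower bound on $\beta^{BK}$ again bounds it by $Ca_h(v,v)$. Finally, for $s_h(v,v)$ I would apply the stability estimate for the lifting operator (Lemma~\ref{lem_stab_lift}), $\|\mathbf{r}_F([v]_F)\|_{L^2(\Omega)}\le Ch_F^{-1/2}\|[v]_F\|_{L^2(F)}$, together with the upper bound on $\beta^{BK}$, to reduce $s_h(v,v)$ to a sum of the jump terms already controlled. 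Collecting the three bounds gives $\interleave v\interleave_h^2\le C\|v\|_h^2$.

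I do not anticipate any real obstacle: the two genuinely nontrivial ingredients---the trace inequality for IFE functions and the stability of $\mathbf{r}_F$, both with constants already independent of the interface position---have been isolated in Lemmas~\ref{lema_trace} and \ref{lem_stab_lift}, so what remains is elementary. The only points requiring a little care are the finite-overlap bookkeeping for the faces meeting a given element and the repeated use of the two-sided bounds on $\beta^{BK}$ to pass between $a_h(v,v)=\int_\Omega\beta^{BK}|\nabla_h v|^2$ and $\sum_{T\in\mathcal{T}_h}\|\nabla_h v\|^2_{L^2(T)}$.
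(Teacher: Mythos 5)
Your proposal is correct and follows essentially the same route as the paper, which obtains the equivalence by combining the definition of $\interleave\cdot\interleave_h$ with the jump estimate $\|[v]_F\|^2_{L^2(F)}\leq Ch_F(|v|^2_{H^1(T_1^F)}+|v|^2_{H^1(T_2^F)})$, the IFE trace inequality of Lemma~\ref{lema_trace}, and the lifting stability of Lemma~\ref{lem_stab_lift}; you simply spell out the details the paper leaves to the reader. The only minor caveat is your parenthetical claim that both neighbours of an interface face are necessarily interface elements, which is not needed: if a neighbour is a non-interface element, the standard polynomial trace inequality serves in place of Lemma~\ref{lema_trace}.
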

\subsection{Interpolation error estimates in the norm $\interleave\cdot \interleave_h$}
\begin{lemma}\label{ener_app}
Suppose $v\in \widetilde{H}^2(\Omega)$. Let $v^{BK}:=E_h^{BK}v$, then there exists a constant $C$ independent of $h$ and the interface location relative to the mesh such that
\begin{equation*}
\interleave v^{BK}-\Pi_h^{\rm IFE}v \interleave_h\leq Ch\|v\|_{H^2(\cup \Omega^\pm)}.
\end{equation*}
\end{lemma}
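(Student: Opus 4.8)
The plan is to bound the three contributions to $\interleave v^{BK}-\Pi_h^{\rm IFE}v\interleave_h$ separately, using the $H^m$-interpolation estimates already established in \eqref{IFE_inter_gamma}--\eqref{IFE_inter_all} as the engine. Write $e:=v^{BK}-\Pi_h^{\rm IFE}v$. First, the bulk term $\|e\|_h^2=a_h(e,e)$ is controlled by $\sum_{T\in\mathcal{T}_h}|e|_{H^1(\cup T_h^\pm)}^2$ up to the factor $\|\beta^{BK}\|_{L^\infty}$; since on non-interface elements $e=v-\Pi_Tv$ and on interface elements $e$ splits as in \eqref{outline_1}, \eqref{pro_int_newcoo_1}, this is $\le Ch^2\|v\|_{H^2(\cup\Omega^\pm)}^2$ directly from \eqref{IFE_inter_all} together with \eqref{est_BK} for the mismatch part. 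So $\|e\|_h\le Ch\|v\|_{H^2(\cup\Omega^\pm)}$.

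Second, the face-gradient term $\sum_{F\in\mathcal{F}_h^\Gamma}h_F\|\{\beta^{BK}\nabla_h e\}_F\|_{L^2(F)}^2$. On each interface face $F$ with neighbours $T_1^F,T_2^F$, I would apply the scaled trace inequality — the standard one \eqref{trace_IFE} when the relevant piece of $e$ is in $S_h(T_i^F)$, and the $H^1$-trace inequality $\|w\|_{L^2(F)}^2\le C(h_T^{-1}\|w\|_{L^2(T)}^2+h_T\|\nabla w\|_{L^2(T)}^2)$ for the parts involving $v_E^\pm$ and $\Pi_Tv_E^\pm$ — to reduce to volume norms of $\nabla e$ and its derivative; concretely $h_F\|\{\beta^{BK}\nabla_h e\}_F\|_{L^2(F)}^2\le C(|e|_{H^1(\cup T_i^{F,\pm})}^2+h_T^2|\nabla e|_{H^1(\cup T_i^{F,\pm})}^2)$. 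The second-derivative piece requires writing $\nabla e=\nabla(v_E^\pm-\Pi_Tv_E^\pm)$ plus the piecewise-linear correction $\Pi_Tv_E^\pm-(\Pi_T^{\rm IFE}v)^\pm$, whose gradient is piecewise constant and so contributes nothing to $|\nabla e|_{H^1}$; hence this term again reduces to the quantities bounded by \eqref{IFE_inter_gamma}, giving an $O(h^2)$ sum over interface faces.

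Third, the penalty-type terms $\sum_{F\in\mathcal{F}_h^\Gamma}h_F^{-1}\|[e]_F\|_{L^2(F)}^2$ and $s_h(e,e)$. For $[e]_F$ I would insert the true jump: since $v\in\widetilde H^2(\Omega)$ the exact solution $v$ is single-valued across $F$ in the relevant weak sense and $v^{BK}$ differs from it only on $\Omega^\triangle\cap F$, so $[e]_F=[\Pi_h^{\rm IFE}v]_F$ modulo a mismatch term supported near $\Gamma$; the IFE space is built so that $\int_F[\Pi_h^{\rm IFE}v]_F=0$, and a scaled Poincaré/trace argument on the two patch elements bounds $h_F^{-1}\|[e]_F\|_{L^2(F)}^2$ by $\sum_i|e|_{H^1(\cup T_i^{F,\pm})}^2$ plus a mismatch contribution handled by \eqref{est_BK}. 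The lifting term is then controlled via Lemma~\ref{lem_stab_lift}: $s_h(e,e)\le C\sum_{F\in\mathcal{F}_h^\Gamma}h_F^{-1}\|[e]_F\|_{L^2(F)}^2$, so it inherits the same bound. Summing the three pieces gives $\interleave e\interleave_h\le Ch\|v\|_{H^2(\cup\Omega^\pm)}$.

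The main obstacle is the treatment of the mismatch region: because $v^{BK}$ is defined using $\Gamma_h$ while the interface conditions hold on $\Gamma$, the jump $[v^{BK}-\Pi_h^{\rm IFE}v]_F$ and the face-gradient averages are not exactly those of an $\widetilde H^2$ function, and one must absorb the discrepancy on $\Omega^\triangle\subset U(\Gamma,C_\Gamma h_\Gamma^2)$. The key is that this region has width $O(h_\Gamma^2)$, so the strip estimates \eqref{delta_est_yaun}, \eqref{delta_est_yaun2} and the bound \eqref{est_BK} supply exactly the extra powers of $h$ needed; making the trace/Poincaré constants on the patch elements interface-location-independent — which is precisely what Lemmas~\ref{lema_trace} and \ref{lem_stab_lift} were proved for — is what keeps all constants uniform.
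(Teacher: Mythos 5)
Your overall architecture (bulk term via the interpolation estimates, face–gradient term via a scaled trace inequality exploiting that the IFE correction is piecewise linear so only $|v_E^s|_{H^2(T)}$ survives, lifting term via Lemma~\ref{lem_stab_lift}) coincides with the paper's proof, and those three pieces are fine. The gap is in the penalty term $\sum_{F\in\mathcal{F}_h^\Gamma}h_F^{-1}\|[e]_F\|^2_{L^2(F)}$ with $e=v^{BK}-\Pi_h^{\rm IFE}v$. Your claim that a ``scaled Poincar\'e/trace argument on the two patch elements'' bounds this by $\sum_i|e|^2_{H^1(\cup T_i^{F,\pm})}$ does not go through: the mean-zero-jump argument needs $e|_{T_i}\in H^1(T_i)$, but on an interface element $e$ jumps across $\Gamma_{h,T}$, since $\Pi_T^{\rm IFE}v$ is continuous there while $v^{BK}$ is not (its jump is $[\![v_E^\pm]\!]$, which vanishes on $\Gamma$ but not on $\Gamma_h$). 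A broken Poincar\'e inequality would reintroduce surface terms such as $\|[\![v_E^\pm]\!]\|_{L^2(\Gamma_{h,T})}$, and the uniformity of such constants in the cut position is precisely the pitfall flagged in Remark~\ref{remark_gamm_depend}; moreover these are surface quantities that the volume bound (\ref{est_BK}) on $\Omega^\triangle$ cannot absorb — one would need additional strip/surface estimates of the type of Lemma~\ref{lem_gamma_h}, which your sketch does not invoke. (A side inaccuracy: $[v^{BK}]_F=0$ a.e.\ on interior faces because $E_h^{BK}v$ is single-valued, so $[e]_F=-[\Pi_h^{\rm IFE}v]_F$ exactly; there is no separate ``mismatch term on $\Omega^\triangle\cap F$'', but this does not rescue the Poincar\'e step.)

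The paper's proof avoids this issue: for each sign $s$ it applies the standard trace inequality to $v_E^s-(\Pi_T^{\rm IFE}v)^s\in H^1(T)$, giving $h_F^{-1}\|[e]_F\|^2_{L^2(F)}\leq C\sum_{s=\pm}\left(h_T^{-2}\|v_E^s-(\Pi_T^{\rm IFE}v)^s\|^2_{L^2(T)}+|v_E^s-(\Pi_T^{\rm IFE}v)^s|^2_{H^1(T)}\right)$, and then controls the $h_T^{-2}$-weighted $L^2$ term by the local estimate (\ref{T_inter}) with $m=0$, whose $h_T^2$ gain comes together with the interface-jump terms $\|[\![\beta_E^\pm\nabla v_E^\pm\cdot\mathbf{n}]\!]\|_{L^2(T)}$ and $\|[\![\nabla_\Gamma v_E^\pm]\!]\|_{L^2(T)}$; these are summed over interface elements with the strip estimates (\ref{zhengti1})--(\ref{zhengti2}) to obtain the $O(h^2)$ bound, after which Lemma~\ref{lem_stab_lift} handles $s_h$ exactly as you propose. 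To complete your argument, replace the Poincar\'e step by this local $L^2$-plus-$H^1$ estimate (or supply the missing broken-Poincar\'e and surface-strip estimates with cut-independent constants); note also that a per-element bound such as (\ref{T_inter}) is needed here rather than the globally summed (\ref{IFE_inter_gamma}), since the weight $h_T^{-2}$ varies from element to element on a merely shape-regular mesh.
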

\begin{proof}
Using (\ref{outline_1}) and (\ref{IFE_inter_gamma}), we can bound the first term in the norm $\interleave\cdot\interleave_h$ as
\begin{equation*}
\|v^{BK}-\Pi_h^{\rm IFE}v\|_h\leq Ch\|v\|_{H^2(\cup \Omega^\pm)}.
\end{equation*}
For the second term, recalling the definition of $E^{BK}_h$ in (\ref{def_BK}) and using (\ref{IFE_inter_gamma}) again we can derive 
\begin{equation*}
\begin{aligned}
\sum_{F\in\mathcal{F}_h^\Gamma}h_F&\|\{\beta^{BK}\nabla_h (v^{BK}-\Pi_h^{\rm IFE}v)\}_F\|^2_{L^2(F)}\\
&=\sum_{F\in\mathcal{F}_h^\Gamma}\sum_{s=\pm}h_F\|\{\beta^{BK}\nabla (v_E^s-(I_h^{\rm IFE}v)^s)\}_F\|^2_{L^2(F\cap \partial T_h^\pm)}\\
&\leq C\sum_{F\in\mathcal{F}_h^\Gamma}\sum_{s=\pm}h_F\|\{\nabla (v_E^s-(I_h^{\rm IFE}v)^s)\}_F\|^2_{L^2(F)}\\
&\leq C\sum_{T\in\mathcal{T}_h^\Gamma}\sum_{s=\pm}\left(|v_E^s-(I_h^{\rm IFE}v)^s|^2_{H^1(T)}+h_T^2|v_E^s|^2_{H^2(T)}\right)\\
&\leq Ch_\Gamma^2\|v\|^2_{H^2(\cup \Omega^\pm)},
\end{aligned}
\end{equation*}
where in the second inequality we have used the standard trace inequality since $v_E^\pm-(I_h^{\rm IFE}v)^\pm\in H^1(T)$. Analogously, by the standard trace inequality, (\ref{T_inter}), (\ref{zhengti1})-(\ref{zhengti2}) and (\ref{extension}) we have
\begin{equation*}
\begin{aligned}
\sum_{F\in\mathcal{F}_h^\Gamma}h_F^{-1}&\| [v^{BK}-\Pi_h^{\rm IFE}v]_F\|^2_{L^2(F)}\\
&\leq C\sum_{T\in\mathcal{T}_h^\Gamma}\sum_{s=\pm}\left(h_T^{-2}|v_E^s-(I_h^{\rm IFE}v)^s|^2_{L^2(T)}+|v_E^s-(I_h^{\rm IFE}v)^s|^2_{H^1(T)}\right)\\
&\leq C \sum_{T\in\mathcal{T}_h^\Gamma}  \left(\left\|[\![\beta_E^\pm\nabla v_E^\pm\cdot\mathbf{n}]\!]\right\|^2_{L^2(T)}+\left\|[\![\nabla_\Gamma v_E^\pm]\!]\right\|^2_{L^2(T)}+h_T^2\sum_{s=\pm}\|v_E^s\|^2_{H^2(T)}\right)\\
&\leq Ch_\Gamma^2\|v\|^2_{H^2(\cup \Omega^\pm)},
\end{aligned}
\end{equation*}
which together with  Lemma~\ref{lem_stab_lift} leads to
\begin{equation*}
s_h(v^{BK}-\Pi_h^{\rm IFE}v,v^{BK}-\Pi_h^{\rm IFE}v)\leq Ch_\Gamma^2\|v\|^2_{H^2(\cup\Omega^\pm)}.
\end{equation*}
The lemma then follows from the above inequalities and the definition of the norm $\interleave\cdot\interleave_h$.
 \end{proof}
 \subsection{Consistency}
Define 
$\tilde{f}_E^\pm:=-\nabla\cdot\beta_E^\pm\nabla u_E^\pm$ in   $\Omega_{\delta_0}^\pm.$
From the original PDE (\ref{p1.1}), we can see  $\tilde{f}_E^\pm-f_E^\pm=0$ on $\Omega^\pm$, while $\tilde{f}_E^\pm-f_E^\pm$ is not in general equal to zero in $\Omega_h^\pm\backslash\Omega^\pm$.  For simplicity of notation, we let $u^{BK}:=E_h^{BK}u$ and define  $\tilde{f}^{BK}$ such that $\tilde{f}^{BK}|_{\Omega_h^\pm}= \tilde{f}_E^\pm|_{\Omega_h^\pm}$, then it holds $-\nabla_h\cdot (\beta^{BK}\nabla_h u^{BK})=\tilde{f}^{BK}$ in $\Omega$.
Multiplying this by $v_h\in V_{h,0}^{\rm IFE}$ and integrating by parts  yields  
\begin{equation}\label{bk_int}
\int_{\Omega}\beta^{BK}\nabla_h u^{BK} \cdot\nabla_h v_h+\int_{\Gamma_h}[\![\beta_E^\pm\nabla u_E^\pm\cdot\mathbf{n}_h]\!]v_h-\sum_{F\in\mathcal{F}_h}\int_F [\beta^{BK}\nabla_h u^{BK}\cdot \mathbf{n}_F v_h]_F=\int_\Omega \tilde{f}^{BK}v_h.
\end{equation}
Using fact $[u^{BK}]_F=[\beta^{BK}\nabla_h u^{BK}\cdot \mathbf{n}_F]_F=0$ for all $F\in\mathcal{F}_h$, we have the following relations
\begin{equation*}
\begin{aligned}
&[\beta^{BK}\nabla_h u^{BK}\cdot \mathbf{n}_F v_h]_F=\{\beta^{BK}\nabla_h u^{BK}\cdot \mathbf{n}_F\}_F[v_h]_F+\{\beta^{BK}\nabla v_h\cdot \mathbf{n}_F\}_F[u^{BK}]_F,\\
&s_h(u^{BK},v_h)=8\sum_{F\in\mathcal{F}_h^\Gamma}\int_{T_1^F\cup T_2^F}\beta^{BK}\mathbf{r}_F([u^{BK}]_F)\mathbf{r}_F([v_h]_F)=0.
\end{aligned}
\end{equation*} 
Combining these with (\ref{bk_int}), (\ref{def_AAH}) and (\ref{method2}), we obtain 
\begin{equation}\label{consis_001}
\begin{aligned}
&A_h(u^{BK}-u_h,v_h)\\
&\quad=\underbrace{-\int_{\Gamma_h}[\![\beta_E^\pm\nabla u_E^\pm\cdot\mathbf{n}_h]\!]v_h}_{{\rm (II)}_1}+\underbrace{\sum_{F\in\mathcal{F}_h^{non}}\int_F\beta^{BK}\nabla_h u^{BK}\cdot \mathbf{n}_F[v_h]_F}_{{\rm (II)}_2}+\underbrace{\int_{\Omega}(\tilde{f}^{BK}-f^{BK})v_h}_{{\rm (II)}_3}.
\end{aligned}
\end{equation}

\textbf{Derive bounds for ${\rm (II)}_1$.} By the Cauchy-Schwarz inequality, we have
\begin{equation}\label{ii_es}
|{\rm (II)}_1|\leq \left\|[\![\beta_E^\pm \nabla u_E^\pm\cdot \mathbf{n}_h]\!]\right\|_{L^2(\Gamma_h)}\|v_h\|_{L^2(\Gamma_h)}.
\end{equation}
To estimate the terms on the right-hand side of the above inequality, we need the following lemma.
\begin{lemma}\label{lem_gamma_h}
There is a constant $C$ depending only on $\Gamma$ such that
\begin{equation*}
\|v\|^2_{L^2(\Gamma_h)}\leq C\|v\|^2_{L^2(\Gamma)}+Ch^2_\Gamma  \|\nabla v\|^2_{L^2(U(\Gamma,C_\Gamma h^2_\Gamma))}\qquad\forall  v\in H^1(U(\Gamma,C_\Gamma h^2_\Gamma)).
\end{equation*}
\end{lemma}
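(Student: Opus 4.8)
The plan is to compare the two surface integrals by lifting $\Gamma_h$ onto $\Gamma$ through the closest point projection $\mathbf{p}$, which by Assumption~\ref{ass_Gamma_h} is a bijection $\Gamma_h\to\Gamma$. First I would record the geometric facts needed for a change of variables on each flat piece $\Gamma_{h,T}$: since $\Gamma\in C^2$ the map $\mathbf{p}$ is $C^1$ on $U(\Gamma,\delta_0)$, and on $\Gamma_{h,T}$ the unit normal $\mathbf{n}_h$ is within $O(h_T)$ of $\mathbf{n}$ by (\ref{ass_Gamma_h_3}) while $\|d\|_{L^\infty(\Gamma_{h,T})}\le Ch_T^2$ by (\ref{ass_Gamma_h_1}); consequently the tangential Jacobian $J$ of $\mathbf{p}|_{\Gamma_h}$ satisfies $C^{-1}\le J\le C$ with $C$ depending only on $\Gamma$ and independent of $h$ and of the interface location relative to the mesh. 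For $\mathbf{x}\in\Gamma_h$ I write $\mathbf{y}=\mathbf{p}(\mathbf{x})\in\Gamma$ and recall the tubular-neighborhood representation $\mathbf{x}=\mathbf{y}+d(\mathbf{x})\mathbf{n}(\mathbf{y})$ with $|d(\mathbf{x})|<C_\Gamma h_\Gamma^2$.

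From the elementary inequality $v(\mathbf{x})^2\le 2v(\mathbf{y})^2+2|v(\mathbf{x})-v(\mathbf{y})|^2$, integrating over $\Gamma_h$ and changing variables $\mathbf{y}=\mathbf{p}(\mathbf{x})$ (using $C^{-1}\le J\le C$) reduces $\int_{\Gamma_h}v(\mathbf{p}(\mathbf{x}))^2\,ds_\mathbf{x}$ to $\le C\|v\|^2_{L^2(\Gamma)}$, which is the first term on the right-hand side. For the remainder I would argue first for $v\in C^1$ (and then pass to general $v$ by density): along the normal segment joining $\mathbf{y}$ to $\mathbf{x}$,
\[
v(\mathbf{x})-v(\mathbf{y})=\int_0^{d(\mathbf{x})}\nabla v(\mathbf{y}+t\mathbf{n}(\mathbf{y}))\cdot\mathbf{n}(\mathbf{y})\,dt,
\]
every point $\mathbf{y}+t\mathbf{n}(\mathbf{y})$ on that segment lies in $U(\Gamma,C_\Gamma h_\Gamma^2)$ since its distance to $\Gamma$ equals $|t|\le|d(\mathbf{x})|$, and Cauchy--Schwarz gives $|v(\mathbf{x})-v(\mathbf{y})|^2\le Ch_\Gamma^2\int_{-C_\Gamma h_\Gamma^2}^{C_\Gamma h_\Gamma^2}|\nabla v(\mathbf{y}+t\mathbf{n}(\mathbf{y}))|^2\,dt$.

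Then I would integrate this bound over $\Gamma_h$, change variables back to $\mathbf{y}\in\Gamma$ (again using $J\le C$), and invoke the standard tubular-neighborhood/coarea estimate $\int_\Gamma\int_{-\delta}^{\delta}g(\mathbf{y}+t\mathbf{n}(\mathbf{y}))\,dt\,ds_\mathbf{y}\le C\int_{U(\Gamma,\delta)}g$ — the same device underlying Lemma~\ref{lem_strip} and its proof in \cite{burman2018Acut} — with $\delta=C_\Gamma h_\Gamma^2$ and $g=|\nabla v|^2$, yielding $\int_{\Gamma_h}|v(\mathbf{x})-v(\mathbf{p}(\mathbf{x}))|^2\,ds_\mathbf{x}\le Ch_\Gamma^2\|\nabla v\|^2_{L^2(U(\Gamma,C_\Gamma h_\Gamma^2))}$. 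Combining the two pieces gives the claim. The main obstacle is the purely geometric step: establishing that the tangential Jacobian of $\mathbf{p}|_{\Gamma_h}$ stays bounded above and below by constants that do not deteriorate as interface elements become arbitrarily badly cut. This is exactly where (\ref{ass_Gamma_h_1}) and (\ref{ass_Gamma_h_3}), together with the $C^2$ regularity of $\Gamma$ (hence of $d$ and $\mathbf{n}$ near $\Gamma$), are essential, and it is cleanest to carry it out piecewise over the flat segments $\Gamma_{h,T}$, the edges between them having zero surface measure.
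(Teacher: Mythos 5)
Your proposal is correct, and it is essentially the argument the paper relies on: the paper's ``proof'' is only a citation of (A.4)--(A.6) in \cite{burman2018Acut}, and those estimates are obtained by exactly your route --- lift $\Gamma_h$ to $\Gamma$ by the closest point projection (with surface Jacobian bounded above and below thanks to $\|d\|_{L^\infty(\Gamma_h)}\le Ch_\Gamma^2$, $\|\mathbf{n}-\mathbf{n}_h\|_{L^\infty}\le Ch$ and $\Gamma\in C^2$), write $v(\mathbf{x})-v(\mathbf{p}(\mathbf{x}))$ as an integral of $\nabla v$ along the normal segment, apply Cauchy--Schwarz, and convert the iterated integral over $\Gamma\times(-C_\Gamma h_\Gamma^2,C_\Gamma h_\Gamma^2)$ into an integral over the tubular neighborhood by the coarea/diffeomorphism argument, with a density step for general $v\in H^1$. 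The only piece you leave as a sketch, the uniform two-sided bound on the tangential Jacobian of $\mathbf{p}|_{\Gamma_h}$, is indeed the standard geometric fact guaranteed by (\ref{ass_Gamma_h_1}), (\ref{ass_Gamma_h_3}) and the $C^2$ regularity of $d$ near $\Gamma$ (one also implicitly needs $C_\Gamma h_\Gamma^2\le\delta_0$, which holds once the mesh resolves the interface), so there is no genuine gap.
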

\begin{proof}
See (A.4)-(A.6) in \cite{burman2018Acut}.
\end{proof}
With this lemma we can derive the estimate for $ \left\|[\![\beta_E^\pm \nabla u_E^\pm\cdot \mathbf{n}_h]\!]\right\|_{L^2(\Gamma_h)}$.
\begin{lemma}\label{lem_flux_app}
There is a constant $C$ independent of $h$ and the interface location relative to the mesh such that
\begin{equation}\label{flux_app}
\left\|[\![\beta_E^\pm \nabla v_E^\pm\cdot \mathbf{n}_h]\!]\right\|^2_{L^2(\Gamma_h)}\leq Ch_\Gamma^2\sum_{s=\pm}\left( | v_E^s |^2_{H^1(U(\Gamma,\delta_0))}+| v_E^s |^2_{H^2(U(\Gamma,\delta_0))}\right)\qquad \forall v\in \widetilde{H}^2(\Omega).
\end{equation}
\end{lemma}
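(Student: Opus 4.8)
The plan is to exploit the interface condition: for $v\in\widetilde{H}^2(\Omega)$ we have $[\beta\nabla v\cdot\mathbf{n}]_\Gamma=0$, so the normal-flux jump $g:=[\![\beta_E^\pm\nabla v_E^\pm\cdot\mathbf{n}]\!]$ belongs to $H^1(U(\Gamma,\delta_0))$ (because $\beta_E^\pm\in C^1(\overline{\Omega^\pm_{\delta_0}})$ and $\mathbf{n}=\nabla d\in C^1(U(\Gamma,\delta_0))^N$) and vanishes on $\Gamma$. The whole difficulty is that the surface of integration is $\Gamma_h$, not $\Gamma$; I would bridge this gap with Lemma~\ref{lem_gamma_h}. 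Writing $\mathbf{G}^\pm:=\beta_E^\pm\nabla v_E^\pm$ and using that $\mathbf{n}_h|_T$ and $\mathbf{n}$ are single-valued across $\Gamma_{h,T}$, split on each $T\in\mathcal{T}_h^\Gamma$
\[
[\![\beta_E^\pm\nabla v_E^\pm\cdot\mathbf{n}_h]\!]=[\![\mathbf{G}^\pm]\!]\cdot\mathbf{n}_h=\underbrace{[\![\mathbf{G}^\pm\cdot\mathbf{n}]\!]}_{=g}+[\![\mathbf{G}^\pm]\!]\cdot(\mathbf{n}_h-\mathbf{n}),
\]
so it suffices to bound the two summands in $L^2(\Gamma_h)$.

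For the first summand, apply Lemma~\ref{lem_gamma_h} to $g$: the $\|g\|_{L^2(\Gamma)}$ term drops because $g|_\Gamma=0$, leaving $\|g\|^2_{L^2(\Gamma_h)}\le Ch_\Gamma^2\|\nabla g\|^2_{L^2(U(\Gamma,C_\Gamma h_\Gamma^2))}\le Ch_\Gamma^2\|\nabla g\|^2_{L^2(U(\Gamma,\delta_0))}$. Expanding $\nabla g$ by the product rule and bounding $\|\nabla\beta_E^\pm\|_{L^\infty}$ via (\ref{beta_ext2}), $\|\beta_E^\pm\|_{L^\infty}$ via (\ref{beta_ext1}), and $\|\nabla\mathbf{n}\|_{L^\infty(U(\Gamma,\delta_0))}$ by a constant depending only on $\Gamma$, gives $\|\nabla g\|_{L^2(U(\Gamma,\delta_0))}\le C\sum_{s=\pm}(|v_E^s|_{H^1(U(\Gamma,\delta_0))}+|v_E^s|_{H^2(U(\Gamma,\delta_0))})$, which is precisely the claimed right-hand side.

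For the second summand, restrict (\ref{n_h_esti}) to $\Gamma_{h,T}$ (valid since $\overline{\Gamma_{h,T}}\subset\overline{T}$) to get $|\mathbf{n}-\mathbf{n}_h|\le Ch_T\le Ch_\Gamma$ there, hence $\|[\![\mathbf{G}^\pm]\!]\cdot(\mathbf{n}_h-\mathbf{n})\|^2_{L^2(\Gamma_h)}\le Ch_\Gamma^2\|[\![\mathbf{G}^\pm]\!]\|^2_{L^2(\Gamma_h)}$. Each Cartesian component of $[\![\mathbf{G}^\pm]\!]$ lies in $H^1(U(\Gamma,\delta_0))$ — though, unlike $g$, it need not vanish on $\Gamma$ — so Lemma~\ref{lem_gamma_h} together with the global trace inequality on $U^\pm(\Gamma,\delta_0)$ applied to $\|[\![\mathbf{G}^\pm]\!]\|_{L^2(\Gamma)}$ yield, for $h_\Gamma$ small, $\|[\![\mathbf{G}^\pm]\!]\|^2_{L^2(\Gamma_h)}\le C\sum_{s=\pm}(|v_E^s|^2_{H^1(U(\Gamma,\delta_0))}+|v_E^s|^2_{H^2(U(\Gamma,\delta_0))})$; multiplying by $Ch_\Gamma^2$ gives a bound of the required form. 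Summing the two contributions finishes the proof.

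The main obstacle is keeping every constant independent of the position of $\Gamma$ relative to the mesh. A naive element-by-element trace inequality from $\Gamma_{h,T}$ into $T_h^\pm$ has a constant that degenerates as $|\Gamma_{h,T}|\to0$ (cf. Remark~\ref{remark_gamm_depend}); this is why the estimate must be assembled globally through Lemma~\ref{lem_gamma_h}, and why the interface condition $g|_\Gamma=0$ is indispensable — it is what turns the crude $\|g\|_{L^2(\Gamma_h)}=O(1)$ bound into the sharp $O(h_\Gamma)$ one.
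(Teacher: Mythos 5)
Your proposal is correct and follows essentially the same route as the paper: the same pointwise split $[\![\beta_E^\pm\nabla v_E^\pm\cdot\mathbf{n}_h]\!]=[\![\beta_E^\pm\nabla v_E^\pm\cdot\mathbf{n}]\!]+[\![\beta_E^\pm\nabla v_E^\pm]\!]\cdot(\mathbf{n}_h-\mathbf{n})$, with the first piece handled via Lemma~\ref{lem_gamma_h} and the vanishing of the normal-flux jump on $\Gamma$, and the second via the estimate (\ref{n_h_esti}) together with Lemma~\ref{lem_gamma_h} and the global trace inequality on $U^\pm(\Gamma,\delta_0)$. The constants are controlled exactly as in the paper, so no changes are needed.
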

\begin{proof}
The triangle inequality gives 
\begin{equation}\label{pro_gammah_app}
\left\|[\![\beta_E^\pm \nabla v_E^\pm\cdot \mathbf{n}_h]\!]\right\|^2_{L^2(\Gamma_h)}\leq 2\left\|[\![\beta_E^\pm \nabla v_E^\pm\cdot (\mathbf{n}_h-\mathbf{n})]\!]\right\|^2_{L^2(\Gamma_h)}+2\left\|[\![\beta_E^\pm \nabla v_E^\pm\cdot \mathbf{n}]\!]\right\|^2_{L^2(\Gamma_h)}.
\end{equation}
By (\ref{n_h_esti}), Lemma~\ref{lem_gamma_h} and the inequalities (\ref{beta_ext1})-(\ref{beta_ext2}) for $\beta_E^\pm$, the first term can be estimated as 
\begin{equation}\label{pro_gammah_app1}
\begin{aligned}
\left\|[\![\beta_E^\pm \nabla v_E^\pm\cdot (\mathbf{n}_h-\mathbf{n})]\!]\right\|^2_{L^2(\Gamma_h)}&\leq Ch^2_\Gamma \left\|[\![\beta_E^\pm \nabla v_E^\pm]\!]\right\|^2_{L^2(\Gamma_h)}\\
&\leq Ch_\Gamma^2 \|[\![\beta_E^\pm \nabla v_E^\pm]\!]\|^2_{L^2(\Gamma)}+Ch^4_\Gamma  \| [\![\beta_E^\pm \nabla v_E^\pm]\!] \|^2_{H^1(U(\Gamma,\delta_0))}\\
&\leq Ch_\Gamma^2\sum_{s=\pm}  \|\nabla v_E^s\|^2_{L^2(\Gamma)}+Ch_\Gamma^4\sum_{s=\pm}\sum_{i=1,2} | v_E^s |^2_{H^i(U(\Gamma,\delta_0))}\\
&\leq Ch_\Gamma^2\sum_{s=\pm}\left( | v_E^s |^2_{H^1(U(\Gamma,\delta_0))}+| v_E^s |^2_{H^2(U(\Gamma,\delta_0))}\right),
\end{aligned}
\end{equation}
where in the last inequality we have applied the global trace inequality on the domain  $U^s(\Gamma,\delta_0)$ for estimating $\|\nabla v_E^s\|_{L^2(\Gamma)}$.
For the second term on the right-hand side of (\ref{pro_gammah_app}), applying Lemma~\ref{lem_gamma_h} again and using the fact $[\![\beta_E^\pm \nabla v_E^\pm\cdot \mathbf{n}]\!]|_\Gamma=0$, we have 
\begin{equation}\label{pro_gammah_app2}
\begin{aligned}
\left\|[\![\beta_E^\pm \nabla v_E^\pm\cdot \mathbf{n}]\!]\right\|^2_{L^2(\Gamma_h)}&\leq Ch^2_\Gamma  \|\nabla [\![\beta_E^\pm \nabla v_E^\pm\cdot \mathbf{n}]\!]\|^2_{L^2(U(\Gamma,\delta_0))}\\
&\leq Ch_\Gamma^2\sum_{s=\pm}\left( | v_E^s |^2_{H^1(U(\Gamma,\delta_0))}+| v_E^s |^2_{H^2(U(\Gamma,\delta_0))}\right).
\end{aligned}
\end{equation}
Substituting  (\ref{pro_gammah_app1}) and (\ref{pro_gammah_app2}) into (\ref{pro_gammah_app}) yields the desired result. 
\end{proof}
To estimate the term $\|v_h\|_{L^2(\Gamma_h)}$ in (\ref{ii_es}),  we first need the inverse inequality for the IFE functions as shown in the following lemma.

\begin{lemma}[Inverse inequality]\label{lema_inverse}
There exists a  constant $C$ independent of $h$ and the interface location relative to the mesh such that
\begin{equation}\label{inverse_IFE}
\|\nabla \phi\|_{L^2( T)}\leq Ch_T^{-1}\| \phi\|_{L^2(T)}~\quad \forall \phi\in S_h(T)\quad\forall T\in\mathcal{T}_h^\Gamma.
\end{equation}
\end{lemma}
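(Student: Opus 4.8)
The plan is to reduce the estimate to two purely polynomial inequalities, one on the whole simplex $T$ and one on its larger subcell. Since $\phi\in S_h(T)$ is piecewise linear, write $\phi^\pm\in\mathbb P_1(T)$ for the polynomial extensions of $\phi|_{T_h^\pm}$, so that $\|\nabla\phi\|^2_{L^2(T)}=\|\nabla\phi^+\|^2_{L^2(T_h^+)}+\|\nabla\phi^-\|^2_{L^2(T_h^-)}$ and likewise for $\|\phi\|_{L^2(T)}$. First I would record, exactly as in the proof of Lemma~\ref{lema_trace}, that the interface conditions defining $S_h(T)$ force $\nabla\phi^+\cdot\mathbf t_{i,h}=\nabla\phi^-\cdot\mathbf t_{i,h}$ and $\beta_T^+\nabla\phi^+\cdot\mathbf n_h=\beta_T^-\nabla\phi^-\cdot\mathbf n_h$; since $\nabla\phi^\pm$ are constant vectors on $T$ this yields $c_1|\nabla\phi^-|\le|\nabla\phi^+|\le c_2|\nabla\phi^-|$ with $c_1,c_2$ depending only on $\beta_T^\pm$, and hence
\begin{equation*}
\|\nabla\phi\|^2_{L^2(T)}\le C\min\big\{\|\nabla\phi^+\|^2_{L^2(T)},\ \|\nabla\phi^-\|^2_{L^2(T)}\big\}.
\end{equation*}
By the symmetry of these relations under exchanging $+$ and $-$ (and $\beta_T^+$ and $\beta_T^-$), I may assume $|T_h^-|\ge\tfrac12|T|$, so it suffices to bound $\|\nabla\phi^-\|_{L^2(T)}$ by $Ch_T^{-1}\|\phi\|_{L^2(T)}$.

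The second step chains three estimates. The standard inverse inequality for $\mathbb P_1(T)$ on a shape‑regular simplex gives $\|\nabla\phi^-\|_{L^2(T)}\le Ch_T^{-1}\|\phi^-\|_{L^2(T)}$ with $C=C(\varrho,N)$. Next, because $\phi^-\in\mathbb P_1(T)$ and $T_h^-\subset T$ with $|T_h^-|\ge\tfrac12|T|$, I claim a Remez‑type inequality
\begin{equation*}
\|\phi^-\|_{L^2(T)}\le C\|\phi^-\|_{L^2(T_h^-)},\qquad C=C(\varrho,N).
\end{equation*}
Finally $\phi^-|_{T_h^-}=\phi|_{T_h^-}$, so $\|\phi^-\|_{L^2(T_h^-)}=\|\phi\|_{L^2(T_h^-)}\le\|\phi\|_{L^2(T)}$. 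Concatenating the three displays with the reduction of the first step gives $\|\nabla\phi\|_{L^2(T)}\le Ch_T^{-1}\|\phi\|_{L^2(T)}$, and since every constant is controlled by $\varrho$, $N$ and $\beta_T^\pm$ alone, the bound is independent of $h$ and of the interface location relative to the mesh.

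The main obstacle is the Remez‑type inequality, as the subcell $T_h^-$ is not itself shape‑regular. I would prove it by affine scaling to a fixed reference simplex $\widehat T$: the volume fraction $|T_h^-|/|T|$ is preserved, and the ratio $\|\phi^-\|_{L^2(T)}/\|\phi^-\|_{L^2(T_h^-)}$ equals the corresponding ratio on $\widehat T$, so it is enough to show that $\gamma_0:=\inf\big\{\,\|\widehat p\|_{L^2(\widehat S)}:\widehat p\in\mathbb P_1(\widehat T),\ \|\widehat p\|_{L^2(\widehat T)}=1,\ \widehat S\subset\widehat T\ \text{measurable},\ |\widehat S|\ge\tfrac12|\widehat T|\,\big\}>0$. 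If $\gamma_0=0$, take near‑minimizing $\widehat p_k,\widehat S_k$; by compactness of the unit sphere of the finite‑dimensional space $\mathbb P_1(\widehat T)$ one may assume $\widehat p_k\to\widehat p$ in $L^2(\widehat T)$ with $\|\widehat p\|_{L^2(\widehat T)}=1$, whence $\int_{\widehat S_k}\widehat p^2\to0$. But $\widehat p$ is a nonzero affine function, so $\{\widehat p=0\}$ is a hyperplane and $|\{|\widehat p|<\varepsilon\}|\to0$ as $\varepsilon\to0^+$; fixing $\varepsilon$ with $|\{|\widehat p|<\varepsilon\}|<\tfrac14|\widehat T|$ forces $|\widehat S_k\cap\{|\widehat p|\ge\varepsilon\}|\ge\tfrac14|\widehat T|$ and hence $\int_{\widehat S_k}\widehat p^2\ge\varepsilon^2|\widehat T|/4$, a contradiction. (A quantitative variant uses the anti‑concentration bound $|\{\mathbf x\in\widehat T:|\widehat p(\mathbf x)|<\varepsilon\|\widehat p\|_{L^2(\widehat T)}\}|\le C_N\varepsilon|\widehat T|$ for affine $\widehat p$.) Once the inverse inequality is established, combining it with the trace inequality of Lemma~\ref{lema_trace} will yield the bound on $\|v_h\|_{L^2(\Gamma_h)}$ needed in the consistency analysis.
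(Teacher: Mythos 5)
Your proof is correct, but it follows a genuinely different route from the paper. The paper's argument works at the level of the functions themselves: it uses the pointwise identity $\phi^+-\phi^-=\nabla(\phi^+-\phi^-)\cdot\mathbf{n}_h\,(\mathbf{x}-\mathbf{x}_\perp)\cdot\mathbf{n}_h$ together with an explicitly constructed ball $B\subset T_h^{s_0}$ of radius at least $r_T/2$ (built from the inscribed ball of $T$ cut by $\Gamma_{h,T}^{ext}$), applies the standard inverse inequality on $B$, and thereby obtains the $L^2$-stability of \emph{both} polynomial extensions, $\|\phi^\pm\|_{L^2(T)}\leq C\|\phi\|_{L^2(T)}$, from which the inverse estimate follows by the standard inverse inequality on $T$ for each extension. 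You instead work at the level of gradients: the jump conditions give $\nabla\phi^+\cdot\mathbf{t}_{i,h}=\nabla\phi^-\cdot\mathbf{t}_{i,h}$ and $\beta_T^+\nabla\phi^+\cdot\mathbf{n}_h=\beta_T^-\nabla\phi^-\cdot\mathbf{n}_h$ (exactly as in the paper's trace inequality), so the two constant gradients are comparable with constants depending only on $\beta_T^\pm$; you then reduce to the subcell occupying at least half of $T$ and invoke a Remez-type norm equivalence $\|p\|_{L^2(T)}\leq C\|p\|_{L^2(S)}$ for $p\in\mathbb{P}_1(T)$ and measurable $S\subset T$ with $|S|\geq \tfrac12|T|$, which you prove correctly by affine scaling and a compactness argument on the reference simplex (the affine map has constant Jacobian, so the ratio of norms is preserved and the constant depends only on $N$). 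Your route avoids the geometric ball construction entirely and needs no pointwise relation between $\phi^+$ and $\phi^-$, at the price of a non-constructive (though easily quantifiable) compactness step; the paper's route is more hands-on and yields the stronger intermediate fact that both extensions are $L^2$-controlled by $\phi$ on all of $T$, which is not a byproduct of your argument. All constants in your chain depend only on $N$, $\varrho$ and $\beta_T^\pm$, so the independence of $h$ and of the interface location is preserved, as required.
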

\begin{proof}
 \begin{figure} [htbp]
\centering
\includegraphics[width=0.5\textwidth]{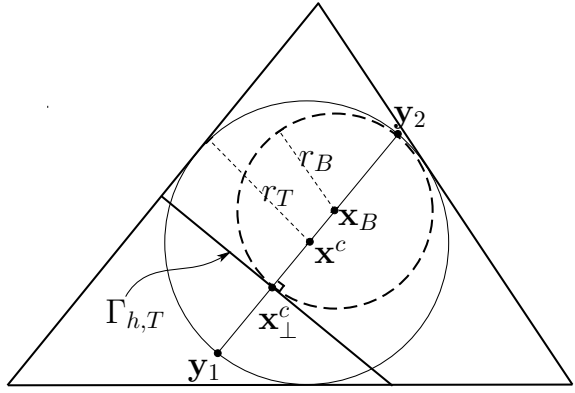}
 \caption{Construction of the ball $B$ (dash line) for the 2D case\label{fig_inver}} 
\end{figure}

Let $\mathbf{x}_\perp=\mathbf{p}_{\Gamma_h^{ext}}(\mathbf{x})$. Using the interface conditions in  the definition of $S_h(T)$ (see also Remark~\ref{remark_th}), we have 
\begin{equation*}
\begin{aligned}
\phi^+(\mathbf{x})-\phi^-(\mathbf{x})&=\nabla (\phi^+-\phi^-)\cdot\mathbf{n}_h  (\mathbf{x}-\mathbf{x}_\perp)\cdot\mathbf{n}_h\\ 
&=
\left\{\begin{array}{l}
(\beta^-_T/\beta_T^+-1)\nabla \phi^-\cdot\mathbf{n}_h  (\mathbf{x}-\mathbf{x}_\perp)\cdot\mathbf{n}_h,\\
(1-\beta^+_T/\beta_T^-)\nabla \phi^+\cdot\mathbf{n}_h  (\mathbf{x}-\mathbf{x}_\perp)\cdot\mathbf{n}_h,
\end{array}
\right.
\end{aligned}
\end{equation*}
which leads to
\begin{equation}\label{pro_inve1}
\|\phi^+\|^2_{L^2(T_h^-)}\leq 2\|\phi^-\|^2_{L^2(T_h^-)}+Ch_T^{2+N}|B|^{-1}|\phi^{s_0}|^2_{H^1(B)},
\end{equation}
where the superscript $s_0$ and  the ball $B$ are chosen as follows. Let $B_T$ be the largest ball inscribed in $T$ with the center $\mathbf{x}^c$ and the radius $r_T$. Let $\mathbf{x}^c_\perp=\mathbf{p}_{\Gamma_h^{ext}}(\mathbf{x}^c)$. The line $\mathbf{x}^c\mathbf{x}^c_\perp$ intersects  $\partial B_T$ at points $\mathbf{y}_1$ and $\mathbf{y}_2$ such that  $|\mathbf{y}_2-\mathbf{x}^c_\perp|\geq |\mathbf{y}_1-\mathbf{x}^c_\perp|$. The superscript $s_0=+$ or $-$ is chosen such that $\mathbf{x}^c\in  \overline{T_h^{s_0}}$. If  $\Gamma_{h,T}\cap B_T=\emptyset$, we choose $B=B_T$, otherwise,
$B$ is the ball centered at $\mathbf{x}_B=(\mathbf{x}^c_\perp+\mathbf{y}_2)/2$ with the radius $r_B=|\mathbf{x}^c_\perp-\mathbf{y}_2|/2$; see Figure~\ref{fig_inver} for an illustration for the 2D case. 
 It is easy to verify that, for both cases, the ball $B\subset T_h^{s_0}$ and its radius $r_B=\min\{r_T, (r_T+|\mathbf{x}^c-\mathbf{x}^c_\perp|)/2\}\geq r_T/2$, thus, $|B|\geq Ch_T^N$. Applying the standard inverse inequality for $v^{s_0}$ on the ball $B$, the inequality (\ref{pro_inve1}) becomes 
\begin{equation*}
\begin{aligned}
\|\phi^+\|^2_{L^2(T_h^-)}&\leq 2\|\phi^-\|^2_{L^2(T_h^-)}+C\|\phi^{s_0}\|^2_{L^2(B)}\\
&\leq 2\|\phi^-\|^2_{L^2(T_h^-)}+C\|\phi\|^2_{L^2(T)}.
\end{aligned}
\end{equation*}
Analogously, 
\begin{equation*}
\|\phi^-\|^2_{L^2(T_h^+)}\leq 2\|\phi^+\|^2_{L^2(T_h^+)}+C\|\phi\|^2_{L^2(T)}.
\end{equation*}
Using the above inequalities we can derive 
\begin{equation*}
\begin{aligned}
\|\nabla \phi\|^2_{L^2( T)}&\leq \sum_{s=\pm} \|\nabla \phi^s\|^2_{L^2( T)}\leq \sum_{s=\pm}Ch_T^{-2}\| \phi^s\|^2_{L^2(T)}\leq Ch_T^{-2}\| \phi\|^2_{L^2(T)},
\end{aligned}
\end{equation*}
which completes the proof of this lemma.
\end{proof}

 The following lemma shows the relations between  the IFE function and its  Crouzeix-Raviart interpolant.
\begin{lemma}\label{lem_rela_PI_IFE}
For any $\phi\in S_h(T)$ with $T\in\mathcal{T}_h^\Gamma$, there exist positive constant $c$  and $C$ independent of $h$ and the interface location relative to the mesh such that 
\begin{align}
&c|\phi|_{H^m(T)}\leq |\Pi_T\phi|_{H^m(T)}\leq C |\phi|_{H^m(T)}, ~~m=0,1,\label{rela_PI_1}\\
& \|\phi-\Pi_T\phi\|_{L^2(T)}\leq Ch_T|\phi|_{H^1(T)}.\label{rela_PI_2}
\end{align}
\end{lemma}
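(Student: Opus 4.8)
The plan is to reduce everything to the explicit representation of IFE functions. First I would note that, by (\ref{pro_rela_ife_cr}) (equivalently (\ref{express_IFE_basis})) and the fact that $\Pi_T$ is the projection onto $\mathbb{P}_1(T)$ dual to the functionals $\mathcal{N}_{i,T}$, every $\phi\in S_h(T)$ satisfies
\begin{equation*}
\phi=\Pi_T\phi+\gamma\,(w-\Pi_Tw),\qquad
\gamma:=\frac{(\beta_T^-/\beta_T^+-1)\,\nabla(\Pi_T\phi)\cdot\mathbf{n}_h}{1+(\beta_T^-/\beta_T^+-1)\,|T_h^+|/|T|},
\end{equation*}
where $w$ is the piecewise linear function in (\ref{phiJ_pro}); indeed the polynomial part $\sum_j\mathcal{N}_{j,T}(\phi)\lambda_{j,T}$ of the representation coincides with $\Pi_T\phi$, and $\Pi_T(w-\Pi_Tw)=0$. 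By (\ref{pro_basi_fenmu}) the denominator of $\gamma$ is bounded below by $\min\{1,\beta_T^-/\beta_T^+\}$, so $|\gamma|\le C|\nabla(\Pi_T\phi)|=C|T|^{-1/2}|\Pi_T\phi|_{H^1(T)}$ with $C$ depending only on $\beta_T^\pm$ (using $\Pi_T\phi\in\mathbb{P}_1(T)$). Combining this with the elementary bounds $\|w-\Pi_Tw\|_{L^2(T)}\le Ch_T|T|^{1/2}$ and $|w-\Pi_Tw|_{H^1(T)}\le C|T|^{1/2}$, which follow from (\ref{pro_esti_IFEbasis2}), (\ref{pro_esti_IFEbasis3}) and shape-regularity, yields the two estimates
\begin{equation*}
\|\phi-\Pi_T\phi\|_{L^2(T)}\le Ch_T\,|\Pi_T\phi|_{H^1(T)},\qquad
|\phi-\Pi_T\phi|_{H^1(T)}\le C\,|\Pi_T\phi|_{H^1(T)},
\end{equation*}
with $C$ independent of $h$ and of the interface location.

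Next I would prove (\ref{rela_PI_1}) for $m=1$. Since $\phi|_{T_h^\pm}\in\mathbb{P}_1(T_h^\pm)$ and $[\phi]_{\Gamma_{h,T}}=0$, the function $\phi$ is continuous on $T$, hence $\phi\in H^1(T)$ with $|\phi|_{H^1(T)}=|\phi|_{H^1(\cup T_h^\pm)}$. The standard stability (\ref{pro_PI_3}) then gives the upper bound $|\Pi_T\phi|_{H^1(T)}\le C|\phi|_{H^1(T)}$, while the triangle inequality combined with the second estimate above gives $|\phi|_{H^1(T)}\le|\Pi_T\phi|_{H^1(T)}+|\phi-\Pi_T\phi|_{H^1(T)}\le C|\Pi_T\phi|_{H^1(T)}$, which is the lower bound. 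The bound (\ref{rela_PI_2}) is then immediate from the first estimate above and the upper bound just obtained: $\|\phi-\Pi_T\phi\|_{L^2(T)}\le Ch_T|\Pi_T\phi|_{H^1(T)}\le Ch_T|\phi|_{H^1(T)}$.

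Finally, for (\ref{rela_PI_1}) with $m=0$ I would use (\ref{rela_PI_2}) together with the inverse inequalities. Applying the IFE inverse inequality of Lemma~\ref{lema_inverse} to $\phi\in S_h(T)$, the triangle inequality gives $\|\Pi_T\phi\|_{L^2(T)}\le\|\phi\|_{L^2(T)}+\|\phi-\Pi_T\phi\|_{L^2(T)}\le\|\phi\|_{L^2(T)}+Ch_T|\phi|_{H^1(T)}\le C\|\phi\|_{L^2(T)}$, the upper bound. For the lower bound, the triangle inequality, (\ref{rela_PI_2}), the $m=1$ lower bound just proved, and the standard inverse inequality for the polynomial $\Pi_T\phi$ on the shape-regular simplex $T$ give $\|\phi\|_{L^2(T)}\le\|\Pi_T\phi\|_{L^2(T)}+Ch_T|\phi|_{H^1(T)}\le\|\Pi_T\phi\|_{L^2(T)}+Ch_T|\Pi_T\phi|_{H^1(T)}\le C\|\Pi_T\phi\|_{L^2(T)}$. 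The only delicate point — and the sole place where uniformity in the interface location enters — is the bound on $|\gamma|$, i.e.\ that the denominator in the explicit IFE formula stays bounded away from zero; once the two estimates in the first paragraph are in hand, the rest is bookkeeping with the triangle and inverse inequalities.
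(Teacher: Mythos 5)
Your proof is correct and follows essentially the same route as the paper: the decomposition $\phi=\Pi_T\phi+\alpha(w-\Pi_Tw)$ from (\ref{pro_rela_ife_cr}), the denominator bound (\ref{pro_basi_fenmu}) to control the coefficient, the bounds on $w-\Pi_Tw$, and then the triangle inequality combined with the Crouzeix--Raviart stability (\ref{pro_PI_3}), the standard inverse inequality for $\Pi_T\phi$, and the IFE inverse inequality (\ref{inverse_IFE}). Your treatment of the $m=0$ case is just a slightly more spelled-out bookkeeping of the same ingredients the paper invokes.
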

\begin{proof}
From (\ref{pro_rela_ife_cr}) we known $\phi=\Pi_T\phi+\alpha\phi_{J}$ with
\begin{equation*}
\alpha=\frac{(\beta^-_T/\beta^+_T-1)\nabla\Pi_T\phi \cdot \mathbf{n}_h}{1+(\beta^-_T/\beta^+_T-1)|T_h^+|/|T|} ~\mbox{ and } ~  \phi_{J}=w-\Pi_Tw.
\end{equation*}
From (\ref{pro_basi_fenmu}) we have 
$$|\alpha|\leq C|\nabla\Pi_T\phi |.$$
Similar to the estimate for $\Upsilon_T$ in Lemma~\ref{lem_arxi}, we can prove 
\begin{equation*}
|\phi_J|_{W^m_\infty(T)}\leq Ch_T^{1-m}.
\end{equation*}
Therefore, we obtain
\begin{equation*}
\begin{aligned}
|\alpha\phi_{J}|_{H^m(T)}&\leq C|\nabla\Pi_T\phi |h_T^{1-m}h_T^{N/2}\\
&\leq Ch_T^{1-m}|\Pi_T\phi |_{H^1(T)}\\
&\leq \left\{\begin{array}{l}
 C|\Pi_T\phi |_{H^m(T)},\\
 Ch_T^{1-m}|\phi |_{H^1(T)} \leq C|\phi |_{H^m(T)},
\end{array}\right.
\end{aligned}
\end{equation*}
where we have used the standard inverse inequality for $\Pi_T\phi$, the stability result (\ref{pro_PI_3}) and the inverse inequality (\ref{inverse_IFE}) for IFE functions.  The lemma follows directly from the above inequalities and the relation $\phi=\Pi_T\phi+\alpha\phi_{J}$.
\end{proof}

We also need a connection operator which maps a standard Crouzeix-Raviart finite element function to a function in $H^1(\Omega)$.
Let $V^{con}_h$ be the $P_2$ Lagrange finite element space associated with $\mathcal{T}_h$ for $N=2$ and the $P_3$ Lagrange finite element space for $N=3$.  The connection operator $R_h: V_h\rightarrow V^{con}_h$ was defined in \cite{brenner2003poincare}. 
Let $\Xi(T)=\{T^\prime \in\mathcal{T}_h : \partial T\cap\partial T^\prime \not=\emptyset \}$. Under the assumption that the triangulation is shape-regular, we have the following  properties of the  operator $R_h$. There exist constants $c$ and $C$ such that 
\begin{align}
&c\sum_{T\in\mathcal{T}_h}|v|^2_{H^i(T)}\leq \sum_{T\in\mathcal{T}_h}|R_hv|^2_{H^i(T)} \leq C \sum_{T\in\mathcal{T}_h}|v|^2_{H^i(T)},\quad i=0,1, ~~ \forall v\in V_h,  \label{R1}\\
&\|R_hv-v\|^2_{L^2(T)}\leq \sum_{T^\prime \in \Xi(T)}Ch_{T^\prime}^2|v|^2_{H^1(T^\prime)}\qquad \forall v\in V_h,\label{R2}
\end{align}
where  the first property is from Corollary 3.3 in \cite{brenner2003poincare} and the second property is (3.7) in \cite{brenner2003poincare}.

Now we can derive the bound for the term  $\|v_h\|_{L^2(\Gamma_h)}$.
\begin{lemma}\label{lem_vh_gamma}
There exists a  constant $C$ independent of $h$ and the interface location relative to the mesh such that
\begin{equation}\label{vh_gamma}
\|v_h\|_{L^2(\Gamma_h)}\leq C\|v_h\|_h\qquad \forall v_h\in V_{h,0}^{\rm IFE}.
\end{equation}
\end{lemma}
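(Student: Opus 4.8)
The plan is to compare $v_h$ on $\Gamma_h$ first with its (face-discontinuous) Crouzeix--Raviart interpolant and then with an $H^1$-conforming smoothing of it, so that the triangle inequality gives
$$\|v_h\|_{L^2(\Gamma_h)}\le\|v_h-R_h\Pi_h v_h\|_{L^2(\Gamma_h)}+\|R_h\Pi_h v_h\|_{L^2(\Gamma_h)},$$
where $\Pi_h v_h$ is defined elementwise by $(\Pi_h v_h)|_T=\Pi_T(v_h|_T)$ and $R_h$ is Brenner's connection operator. Since $\int_F\Pi_Tv_h=\int_F v_h$ on every face, one checks $\Pi_h v_h\in V_{h,0}$, so $R_h\Pi_h v_h\in V_h^{con}\subset H^1(\Omega)$ is well defined.

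For the mismatch term I would work element by element on $T\in\mathcal T_h^\Gamma$. The function $v_h-R_h\Pi_h v_h$ is a continuous piecewise polynomial, hence in $H^1(T)$, so the standard trace inequality onto the planar cut $\Gamma_{h,T}\subset T$, with a constant independent of the cut position (see, e.g., \cite{hansbo2002unfitted}), gives
$$\|v_h-R_h\Pi_h v_h\|^2_{L^2(\Gamma_{h,T})}\le C\big(h_T^{-1}\|v_h-R_h\Pi_h v_h\|^2_{L^2(T)}+h_T|v_h-R_h\Pi_h v_h|^2_{H^1(T)}\big).$$
Splitting $v_h-R_h\Pi_h v_h=(v_h-\Pi_T v_h)+(\Pi_T v_h-R_h\Pi_h v_h)$ and using (\ref{rela_PI_1})--(\ref{rela_PI_2}) for the first part and (\ref{R1})--(\ref{R2}) together with an inverse inequality for the polynomial second part (and the fact that neighbouring elements of a shape-regular mesh have comparable diameters, so that $h_{T'}\le Ch_T$ for $T'\in\Xi(T)$), I expect $\|v_h-R_h\Pi_h v_h\|^2_{L^2(\Gamma_{h,T})}\le Ch_T\sum_{T'\in\Xi(T)}|v_h|^2_{H^1(T')}$. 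Summing over $\mathcal T_h^\Gamma$, using $h_T\le h_\Gamma$, the finite overlap of the patches $\Xi(T)$, and $\sum_{T\in\mathcal T_h}|v_h|^2_{H^1(T)}\le C\|v_h\|_h^2$, yields $\|v_h-R_h\Pi_h v_h\|_{L^2(\Gamma_h)}\le Ch_\Gamma^{1/2}\|v_h\|_h\le C\|v_h\|_h$.

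For the conforming term, since $R_h\Pi_h v_h\in H^1(\Omega)\subset H^1(U(\Gamma,C_\Gamma h_\Gamma^2))$, Lemma~\ref{lem_gamma_h} applies and bounds $\|R_h\Pi_h v_h\|^2_{L^2(\Gamma_h)}$ by $C\|R_h\Pi_h v_h\|^2_{L^2(\Gamma)}+Ch_\Gamma^2\|\nabla R_h\Pi_h v_h\|^2_{L^2(U(\Gamma,C_\Gamma h_\Gamma^2))}$. The last term is controlled by $|R_h\Pi_h v_h|^2_{H^1(\Omega)}$, hence by (\ref{R1}), (\ref{rela_PI_1}) and $\|v_h\|_h^2\asymp\sum_{T\in\mathcal T_h}|v_h|^2_{H^1(T)}$; and for $\|R_h\Pi_h v_h\|_{L^2(\Gamma)}$ I would apply the global trace theorem $\|g\|_{L^2(\Gamma)}\le C\|g\|_{H^1(\Omega)}$ (valid since $\Gamma\Subset\Omega$) and then bound $\|R_h\Pi_h v_h\|_{H^1(\Omega)}$ by $\|v_h\|_h$ using (\ref{R1}), the elementwise estimate $\|\Pi_T v_h\|_{L^2(T)}\le C\|v_h\|_{L^2(T)}$ from (\ref{rela_PI_1}), and the Poincar\'e--Friedrichs inequality (\ref{poin_ineq}) --- this last step is precisely where membership of $v_h$ in $V_{h,0}^{\rm IFE}$ (the zero boundary-integral constraint) is used. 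Combining the two estimates gives (\ref{vh_gamma}).

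I expect the main obstacle to be the reduction from the nonconforming interpolant $\Pi_h v_h$, which is discontinuous across mesh faces, to the globally $H^1$ function $R_h\Pi_h v_h$: this is what makes the interface-comparison Lemma~\ref{lem_gamma_h} applicable, but it forces one to carry the Crouzeix--Raviart-versus-IFE norm equivalence of Lemma~\ref{lem_rela_PI_IFE} and the approximation properties (\ref{R1})--(\ref{R2}) of $R_h$ through every step while keeping all constants independent of the position of $\Gamma$ and $\Gamma_h$ relative to the mesh, the only slightly nonstandard ingredient being the uniform-in-the-cut trace inequality onto $\Gamma_{h,T}$.
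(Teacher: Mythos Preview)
Your proposal is correct and follows essentially the same route as the paper: the paper uses the three-way split $\|v_h\|_{L^2(\Gamma_h)}\le\|R_h\Pi_h v_h\|_{L^2(\Gamma_h)}+\|v_h-\Pi_h v_h\|_{L^2(\Gamma_h)}+\|\Pi_h v_h-R_h\Pi_h v_h\|_{L^2(\Gamma_h)}$, while you merge the last two into $\|v_h-R_h\Pi_h v_h\|_{L^2(\Gamma_h)}$ and then split them elementwise, but the ingredients---the local trace onto $\Gamma_{h,T}$, Lemma~\ref{lem_rela_PI_IFE}, the properties (\ref{R1})--(\ref{R2}) of $R_h$, Lemma~\ref{lem_gamma_h}, the global trace on $\Gamma$, and the Poincar\'e--Friedrichs inequality (\ref{poin_ineq})---are identical.
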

\begin{proof} 
We have the split
\begin{equation*}
\|v_h\|_{L^2(\Gamma_h)}\leq \| R_h \Pi_h v_h\|_{L^2(\Gamma_h)}+\| v_h- \Pi_h v_h\|_{L^2(\Gamma_h)}+\| \Pi_h v_h-R_h \Pi_h v_h\|_{L^2(\Gamma_h)}.
\end{equation*}
Here we emphasize that we used the standard Crouzeix-Raviart interpolation operator $\Pi_h$  in the above inequality, not the IFE interpolation operator $\Pi_h^{\rm IFE}$.
Since $R_h \Pi_h v_h\in H^1(\Omega)$, it follows from  Lemma~\ref{lem_gamma_h} and the global trace inequality that
\begin{equation*}
\begin{aligned}
 \| R_h \Pi_h v_h\|^2_{L^2(\Gamma_h)}&\leq C\|R_h\Pi_h v_h\|^2_{H^1(\Omega)}\leq C\sum_{T\in\mathcal{T}_h}\|\Pi_h v_h\|^2_{H^1(T)}\\
 &\leq C\sum_{T\in\mathcal{T}_h}\|v_h\|^2_{H^1(T)}\leq C\sum_{T\in\mathcal{T}_h}|v_h|^2_{H^1(T)},
\end{aligned}
\end{equation*}
where we have used (\ref{R1}) in the second inequality, (\ref{rela_PI_1}) in the third inequality, and (\ref{poin_ineq}) in the last inequality.
By the well-known trace inequality on the interface (see, e.g., \cite{hansbo2002unfitted,2016High}), we get
\begin{equation*}
\begin{aligned}
\| v_h- \Pi_h v_h\|^2_{L^2(\Gamma_h)}&\leq \sum_{T\in\mathcal{T}_h^\Gamma}C(h_T^{-1}\| v_h- \Pi_h v_h\|^2_{L^2(T)}+h_T| v_h- \Pi_h v_h|^2_{H^1(T)})\\
&\leq \sum_{T\in\mathcal{T}_h^\Gamma}C h_T| v_h|^2_{H^1(T)},
\end{aligned}
\end{equation*}
where we used (\ref{rela_PI_1}) and (\ref{rela_PI_2}) in the last inequality.
Applying the well-known trace inequality on the interface again gives 
\begin{equation*}
\begin{aligned}
\|  \Pi_hv_h-R_h \Pi_h v_h\|^2_{L^2(\Gamma_h)}&\leq \sum_{T\in\mathcal{T}_h^\Gamma}C(h_T^{-1}\| \Pi_hv_h-R_h \Pi_h v_h\|^2_{L^2(T)}+h_T| \Pi_hv_h-R_h \Pi_h v_h|^2_{H^1(T)})\\
&\leq \sum_{T\in\mathcal{T}_h^\Gamma}Ch_T^{-1}\| \Pi_hv_h-R_h \Pi_h v_h\|^2_{L^2(T)}\\
&\leq \sum_{T\in\mathcal{T}_h^\Gamma}\sum_{T^\prime\in \Xi(T)}Ch_T^{-1}h^2_{T^\prime}| \Pi_hv_h|^2_{H^1(T^\prime)}\\&\leq \sum_{T\in\mathcal{T}_h}Ch| v_h|^2_{H^1(T)},
\end{aligned}
\end{equation*}
where in the second inequality we used the standard inverse inequality, in the third inequality we used the estimate (\ref{R2}), and in the last inequality we used (\ref{rela_PI_1}). Collecting the above inequalities yields the desired result.
\end{proof}
Substituting (\ref{flux_app}) and (\ref{vh_gamma}) into (\ref{ii_es})  and using the extension result (\ref{extension}) we obtain
\begin{equation}\label{consis_fen1}
|{\rm (II)}_1|\leq  Ch_\Gamma\|u\|_{H^2(\cup \Omega^\pm)}\|v_h\|_h.
\end{equation}

\textbf{Derive bounds for ${\rm (II)}_2$.} It suffices to consider the case $F\in\mathcal{F}_h^{non}$ with $F\subset \partial T$, $T\in\mathcal{T}_h^\Gamma$.  Suppose $F\subset\Omega^{s_0}$ with $s_0=+$ or $-$, then we have the standard result from the nonconforming finite element analysis  
\begin{equation*}
\begin{aligned}
\left|\int_F\beta^{BK}\nabla u^{BK}\cdot \mathbf{n}_F[v_h]_F\right|&=\left|\int_F\beta_E^{s_0}\nabla u_E^{s_0}\cdot \mathbf{n}_F[v_h]_F\right|\\
&\leq Ch|u_E^{s_0}|_{H^2(T)}\left(|v_h|^2_{H^1(T_1^{F})}+|v_h|^2_{H^1(T_1^{F})}\right)^{1/2},
\end{aligned}
\end{equation*}
which together with an analogous estimate for other faces gives 
\begin{equation}\label{consis_fen2}
|{\rm (II)}_2|\leq Ch\sum_{s=\pm}|u_E^s|_{H^2(\Omega_{\delta_0}^\pm)}\|v_h\|_h\leq Ch\|u\|_{H^2(\cup\Omega^\pm)}\|v_h\|_h.
\end{equation}

\textbf{Derive bounds for ${\rm (II)}_3$.}  
By definition, we have
\begin{equation*}
\tilde{f}^{BK}-f^{BK}=
\left\{
\begin{aligned}
&\tilde{f}_E^\pm-f_E^\pm  &&\mbox{in } \Omega_h^\pm\backslash\Omega^\pm,\\
&0&&\mbox{otherwise.}
\end{aligned}\right.
\end{equation*}
By (\ref{extension}) and (\ref{regular}), it holds
\begin{equation*}
\begin{aligned}
\|\tilde{f}_E^\pm\|_{L^2(\Omega_{\delta_0}^\pm)}&=\|\nabla\cdot \beta_E^\pm\nabla u_E^\pm\|_{L^2(\Omega_{\delta_0}^\pm)}\leq  C \| u_E^\pm\|_{H^2(\Omega_{\delta_0}^\pm)}\\
&\leq  C \| u^\pm\|_{H^2(\Omega^\pm)}\leq  C\|f\|_{L^2(\Omega)}.
\end{aligned}
\end{equation*}
Recalling $\Omega^\triangle=(\Omega_h^-\cap\Omega^+)\cup(\Omega_h^+\cap\Omega^-)$, we can derive
\begin{equation}\label{pro_II2_0}
|{\rm (II)}_3|=\left|\int_{\Omega}(\tilde{f}^{BK}-f^{BK})v_h\right|\leq C\|f\|_{L^2(\Omega)}\|v_h\|_{L^2(\Omega^\triangle)},
\end{equation}

\begin{lemma}\label{lem_mis_IFE}
There exists a  constant $C$ independent of $h$ and the interface location relative to the mesh such that
\begin{equation*}
\|v_h\|_{L^2(\Omega^\triangle)} \leq Ch\|v_h\|_h \qquad \forall v_h\in V_{h,0}^{\rm IFE}.
\end{equation*}
\end{lemma}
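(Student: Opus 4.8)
The plan is to follow the proof of Lemma~\ref{lem_vh_gamma} almost verbatim, but with the trace inequality on $\Gamma_h$ replaced by the strip inequality (\ref{delta_est_yaun}) applied on the thin mismatch region. The structural fact to exploit is (\ref{tdelta_U}), namely $\Omega^\triangle\subset U(\Gamma,C_\Gamma h_\Gamma^2)$, a tube of width $O(h^2)$; the whole content of the lemma is that this smallness buys one extra power of $h$ over the crude bound $\|v_h\|_{L^2(\Omega^\triangle)}\le C\|v_h\|_h$. First I would split, with $\Pi_h$ the \emph{standard} Crouzeix-Raviart interpolation operator (not $\Pi_h^{\rm IFE}$) and $R_h$ the connection operator,
\[
\|v_h\|_{L^2(\Omega^\triangle)}\le \|R_h\Pi_h v_h\|_{L^2(\Omega^\triangle)}+\|v_h-\Pi_h v_h\|_{L^2(\Omega^\triangle)}+\|\Pi_h v_h-R_h\Pi_h v_h\|_{L^2(\Omega^\triangle)} .
\]
The last two terms already carry a factor $h$ for free: on each $T\in\mathcal{T}_h^\Gamma$ one has $\Omega^\triangle\cap T=T^\triangle\subset T$, so (\ref{rela_PI_2}) gives $\|v_h-\Pi_T v_h\|_{L^2(T)}\le Ch_T|v_h|_{H^1(T)}$, while (\ref{R2}) together with (\ref{rela_PI_1}) and the bounded overlap of $\Xi(T)$ gives $\|\Pi_h v_h-R_h\Pi_h v_h\|_{L^2(T)}^2\le Ch^2\sum_{T'\in\Xi(T)}|v_h|_{H^1(T')}^2$; summing over $T\in\mathcal{T}_h^\Gamma$, using $\Omega^\triangle=\bigcup_{T\in\mathcal{T}_h^\Gamma}T^\triangle$ and (\ref{poin_ineq}), bounds both terms by $Ch\|v_h\|_h$.

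For the remaining term the width of $\Omega^\triangle$ is actually used. Since $R_h\Pi_h v_h\in H^1(\Omega)$, splitting $\Omega^\triangle$ into $\Omega^\triangle\cap\Omega^\pm\subset U^\pm(\Gamma,C_\Gamma h_\Gamma^2)$ and applying (\ref{delta_est_yaun}) with $\delta=C_\Gamma h_\Gamma^2$ yields
\[
\|R_h\Pi_h v_h\|_{L^2(\Omega^\triangle)}^2\le\sum_{s=\pm}\|R_h\Pi_h v_h\|_{L^2(U^s(\Gamma,C_\Gamma h_\Gamma^2))}^2\le Ch_\Gamma^2\,\|R_h\Pi_h v_h\|_{H^1(\Omega)}^2 .
\]
It then suffices to show $\|R_h\Pi_h v_h\|_{H^1(\Omega)}\le C\|v_h\|_h$. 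The seminorm part follows from (\ref{R1}) and (\ref{rela_PI_1}), using $|\Pi_T v_h|_{H^1(T)}\le C|v_h|_{H^1(T)}$ on interface elements and $\Pi_T v_h=v_h$ on the others. For the $L^2$-norm I would write $R_h\Pi_h v_h=v_h+(\Pi_h v_h-v_h)+(R_h\Pi_h v_h-\Pi_h v_h)$ and estimate the three summands by (\ref{poin_ineq}), (\ref{rela_PI_2}), and (\ref{R2}) respectively, each of which is $\le C\|v_h\|_h$.

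Collecting the three bounds gives the claimed estimate $\|v_h\|_{L^2(\Omega^\triangle)}\le Ch\|v_h\|_h$. The main obstacle — in fact essentially the only subtlety — is the term $\|R_h\Pi_h v_h\|_{L^2(\Omega^\triangle)}$: one cannot apply a strip-type inequality directly to $v_h$, which is nonconforming across mesh faces and discontinuous across $\Gamma_{h,T}$, so the detour through the globally $H^1$ object $R_h\Pi_h v_h$ (exactly as in the proof of Lemma~\ref{lem_vh_gamma}) is unavoidable; everything else is routine bookkeeping with estimates already established.
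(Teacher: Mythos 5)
Your proof is correct and takes essentially the same route as the paper's: the same triangle-inequality detour through the conforming function $R_h\Pi_h v_h$, the strip inequality (\ref{delta_est_yaun}) applied on the tube $U(\Gamma,C_\Gamma h_\Gamma^2)\supset\Omega^\triangle$ to harvest the extra power of $h$, and the bounds (\ref{rela_PI_1})--(\ref{rela_PI_2}), (\ref{R1})--(\ref{R2}), (\ref{poin_ineq}) for the remaining terms. The only cosmetic differences are that you restrict the two correction terms to interface elements and establish $\|R_h\Pi_h v_h\|_{H^1(\Omega)}\leq C\|v_h\|_h$ through an additional decomposition, whereas the paper bounds those terms over all of $\Omega$ and invokes (\ref{R1}), (\ref{rela_PI_1}) and (\ref{poin_ineq}) directly.
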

\begin{proof}
By (\ref{tdelta_U}) and the triangle inequality we have 
\begin{equation*}
\begin{aligned}
\|v_h\|_{L^2(\Omega^\triangle)} &\leq \|v_h\|_{L^2(U(\Gamma,C_\Gamma h_\Gamma^2))}\\
&\leq \|R_h\Pi_h v_h\|_{L^2(U(\Gamma,C_\Gamma h_\Gamma^2))}+\|v_h-\Pi_hv_h\|_{L^2(\Omega)}+\|\Pi_hv_h-R_h\Pi_hv_h\|_{L^2(\Omega)}.
\end{aligned}
\end{equation*}
Using (\ref{delta_est_yaun}), (\ref{rela_PI_1})-(\ref{R2}) and (\ref{poin_ineq}) we obtain
\begin{equation*}
\begin{aligned}
\|v_h\|_{L^2(\Omega^\triangle)} &\leq Ch_\Gamma \|R_h\Pi_h v_h\|_{H^1(\Omega)}+Ch\|v_h\|_h+Ch\|\Pi_hv\|_h\\
&\leq Ch\|v_h\|_h,
\end{aligned}
\end{equation*}
which completes the proof.
\end{proof}
It follows from the above lemma and (\ref{pro_II2_0}) that 
\begin{equation}\label{consis_fen3}
|{\rm (II)}_3|\leq Ch\|f\|_{L^2(\Omega)}\|v_h\|_{h}.
\end{equation}

Substituting (\ref{consis_fen1}), (\ref{consis_fen2}) and (\ref{consis_fen3})  into (\ref{consis_001}) yields the following lemma.
\begin{lemma}\label{lem_consis}
Let $u$ and $u_h$ be the solutions of  problem (\ref{p1.1})-(\ref{p1.5}) and problem (\ref{method2}), respectively. Then it holds for all $v_h\in V_{h,0}^{\rm IFE}$ that
\begin{equation*}
\begin{aligned}
&\left|A_h(u^{BK}-u_h,v_h)\right|\leq Ch(\|u\|_{H^2(\cup\Omega^\pm)}+\|f\|_{L^2(\Omega)})\|v_h\|_{h}.
\end{aligned}
\end{equation*}
\end{lemma}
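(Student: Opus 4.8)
The plan is to establish a consistency identity for $u^{BK}$ and then bound the three error contributions it produces. First I would observe that, because $u^{BK}|_{\Omega_h^\pm}=u_E^\pm$ with $u_E^\pm$ smooth, $u^{BK}$ solves $-\nabla_h\cdot(\beta^{BK}\nabla_h u^{BK})=\tilde f^{BK}$ pointwise on each $\Omega_h^\pm$. Testing this with $v_h\in V_{h,0}^{\rm IFE}$ and integrating by parts face by face gives (\ref{bk_int}); the boundary contributions on $\partial\Omega$ vanish since $\int_F v_h=0$ for $F\in\mathcal{F}_h^b$. The crucial bookkeeping step is to notice that $[u^{BK}]_F=0$ and $[\beta^{BK}\nabla_h u^{BK}\cdot\mathbf{n}_F]_F=0$ for every $F\in\mathcal{F}_h$: this makes $s_h(u^{BK},v_h)=0$, kills the $\{\beta^{BK}\nabla_h v_h\cdot\mathbf{n}_F\}_F[u^{BK}]_F$ half of $b_h(u^{BK},v_h)$, and lets me rewrite $[\beta^{BK}\nabla_h u^{BK}\cdot\mathbf{n}_F\,v_h]_F=\{\beta^{BK}\nabla_h u^{BK}\cdot\mathbf{n}_F\}_F[v_h]_F$, which exactly matches the remaining half of $b_h$. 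After also using the definition of $A_h$ in (\ref{def_AAH}) and the scheme (\ref{method2}), the only surviving terms are the jump of the conormal flux across the discrete interface ${\rm (II)}_1$, the nonconformity residual over non-interface faces ${\rm (II)}_2$, and the data mismatch ${\rm (II)}_3$, i.e.\ (\ref{consis_001}).

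Next I would bound each term using the estimates already assembled in the paper. For ${\rm (II)}_1$, Cauchy--Schwarz on $\Gamma_h$ gives (\ref{ii_es}); the flux factor is $O(h_\Gamma)\|u\|_{H^2(\cup\Omega^\pm)}$ by Lemma~\ref{lem_flux_app} (the point being that the true flux jump $[\![\beta_E^\pm\nabla u_E^\pm\cdot\mathbf{n}]\!]$ vanishes on $\Gamma$, and $\Gamma_h\subset U(\Gamma,C_\Gamma h_\Gamma^2)$ together with $\|\mathbf{n}-\mathbf{n}_h\|_{L^\infty}\le Ch_\Gamma$ supplies the powers of $h_\Gamma$), and the trace factor is controlled by $\|v_h\|_{L^2(\Gamma_h)}\le C\|v_h\|_h$ from Lemma~\ref{lem_vh_gamma}; multiplying yields (\ref{consis_fen1}). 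For ${\rm (II)}_2$ only faces $F\in\mathcal{F}_h^{non}$ lying on the boundary of an interface element need attention, and there the classical nonconforming estimate $|\int_F\beta_E^{s_0}\nabla u_E^{s_0}\cdot\mathbf{n}_F[v_h]_F|\le Ch|u_E^{s_0}|_{H^2(T)}(|v_h|_{H^1(T_1^F)}^2+|v_h|_{H^1(T_2^F)}^2)^{1/2}$ (exploiting $\int_F[v_h]_F=0$), summed with Cauchy--Schwarz and the extension bound (\ref{extension}), gives (\ref{consis_fen2}). For ${\rm (II)}_3$, $\tilde f^{BK}-f^{BK}$ is supported in $\Omega^\triangle\subset U(\Gamma,C_\Gamma h_\Gamma^2)$ by (\ref{tdelta_U}), so (\ref{pro_II2_0}) holds after bounding $\|\tilde f_E^\pm\|_{L^2(\Omega_{\delta_0}^\pm)}\le C\|u\|_{H^2(\cup\Omega^\pm)}\le C\|f\|_{L^2(\Omega)}$ via (\ref{extension}) and the regularity estimate (\ref{regular}); then $\|v_h\|_{L^2(\Omega^\triangle)}\le Ch\|v_h\|_h$ from Lemma~\ref{lem_mis_IFE} delivers (\ref{consis_fen3}).

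Finally I would simply add (\ref{consis_fen1}), (\ref{consis_fen2}) and (\ref{consis_fen3}), using $h_\Gamma\le h$, to conclude $|A_h(u^{BK}-u_h,v_h)|\le Ch(\|u\|_{H^2(\cup\Omega^\pm)}+\|f\|_{L^2(\Omega)})\|v_h\|_h$. Since every displayed inequality invoked above is already proved in the excerpt, the only genuinely delicate part for this lemma is the derivation of the consistency identity (\ref{consis_001})---in particular, verifying that all of $s_h(u^{BK},v_h)$ and the symmetric half of $b_h(u^{BK},v_h)$ drop out because $u^{BK}$ and its $\beta^{BK}$-weighted normal derivative are continuous across every mesh face, so that the genuine variational crimes are exactly the use of $\Gamma_h$ in place of $\Gamma$ and the data mismatch on $\Omega^\triangle$. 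The heavier analytic work (the IFE inverse inequality of Lemma~\ref{lema_inverse} feeding Lemma~\ref{lem_vh_gamma}, and the flux-approximation Lemma~\ref{lem_flux_app}) has already been carried out, and I would use those results as black boxes here.
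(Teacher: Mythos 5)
Your proposal is correct and follows essentially the same route as the paper: the same consistency identity (\ref{consis_001}) obtained from (\ref{bk_int}) using $[u^{BK}]_F=[\beta^{BK}\nabla_h u^{BK}\cdot\mathbf{n}_F]_F=0$, and the same bounds for ${\rm (II)}_1$, ${\rm (II)}_2$, ${\rm (II)}_3$ via Lemmas~\ref{lem_flux_app}, \ref{lem_vh_gamma}, \ref{lem_mis_IFE}, the standard nonconforming face estimate, and (\ref{extension})--(\ref{regular}). No gaps to report.
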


\subsection{Error estimates}
With these preparations, we are ready to derive the $H^1$ error estimate for the proposed IFE method. 
 \begin{theorem}\label{theo_mainH1}
Let $u$ and $u_h$ be the solutions of problem (\ref{p1.1})-(\ref{p1.5}) and problem (\ref{method2}), respectively.  Then there exists a constant $C$ independent of $h$ and the interface location relative to the mesh such that
\begin{equation}\label{H_1_error_method2}
\interleave u^{BK}-u_h \interleave_h\leq Ch(\|u\|_{H^2(\cup\Omega^\pm)}+\|f\|_{L^2(\Omega)}),
\end{equation}
where $u^{BK}=E_h^{BK}u$.
\end{theorem}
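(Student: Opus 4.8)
The plan is to run a second Strang lemma argument adapted to the nonconforming IFE setting, combining the coercivity of $A_h(\cdot,\cdot)$ on $V_{h,0}^{\rm IFE}$, its continuity in the $\interleave\cdot\interleave_h$ norm, the consistency estimate, and the interpolation error bound in the energy-type norm. Write $u^{BK}-u_h = (u^{BK}-\Pi_h^{\rm IFE}u) + \eta$ with $\eta := \Pi_h^{\rm IFE}u - u_h \in V_{h,0}^{\rm IFE}$, and estimate the two pieces separately. For the first piece, Lemma~\ref{ener_app} immediately gives $\interleave u^{BK}-\Pi_h^{\rm IFE}u\interleave_h \le Ch\|u\|_{H^2(\cup\Omega^\pm)}$, so by the triangle inequality it remains to bound $\interleave\eta\interleave_h$.

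For $\eta$, I would first use the coercivity (Lemma~\ref{lem_Coercivity}) to write $\tfrac12\|\eta\|_h^2 \le A_h(\eta,\eta)$ and then split
\[
A_h(\eta,\eta) = A_h\big(\Pi_h^{\rm IFE}u - u^{BK},\eta\big) + A_h\big(u^{BK}-u_h,\eta\big).
\]
The second term is controlled directly by the consistency estimate in Lemma~\ref{lem_consis}, yielding $|A_h(u^{BK}-u_h,\eta)| \le Ch(\|u\|_{H^2(\cup\Omega^\pm)}+\|f\|_{L^2(\Omega)})\|\eta\|_h$. For the first term I would invoke the continuity bound (\ref{conti}), $|A_h(\Pi_h^{\rm IFE}u-u^{BK},\eta)| \le \interleave\Pi_h^{\rm IFE}u-u^{BK}\interleave_h\,\interleave\eta\interleave_h$, then apply Lemma~\ref{ener_app} to the first factor and the norm-equivalence (Lemma~\ref{lema_equ}) to replace $\interleave\eta\interleave_h$ by $C\|\eta\|_h$, obtaining a bound $Ch\|u\|_{H^2(\cup\Omega^\pm)}\|\eta\|_h$. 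Combining these gives $\|\eta\|_h^2 \le Ch(\|u\|_{H^2(\cup\Omega^\pm)}+\|f\|_{L^2(\Omega)})\|\eta\|_h$, hence $\|\eta\|_h \le Ch(\|u\|_{H^2(\cup\Omega^\pm)}+\|f\|_{L^2(\Omega)})$, and one more application of Lemma~\ref{lema_equ} upgrades this to $\interleave\eta\interleave_h \le C\|\eta\|_h$. The desired bound (\ref{H_1_error_method2}) then follows from the triangle inequality together with the bound on the first piece.

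I do not expect a genuine obstacle at this stage: all of the delicate interface-dependent analysis — the unisolvence and uniform bounds of the IFE basis functions, the optimal interpolation estimates carried out on the tubular neighborhood of $\Gamma$, the trace and inverse inequalities for IFE functions with constants independent of the interface location, and especially the consistency analysis handling the mismatch region $\Omega^\triangle$ and the flux jump on $\Gamma_h$ — has already been absorbed into Lemmas~\ref{ener_app}, \ref{lema_equ} and \ref{lem_consis}. The only point requiring a little care is to apply the norm equivalence, which holds only on $V_{h,0}^{\rm IFE}$, to the discrete function $\eta$ and never to $u^{BK}$; this is precisely why the argument must be organized around the interpolant $\Pi_h^{\rm IFE}u$ rather than around $u^{BK}$ directly, and why the split of $A_h(\eta,\eta)$ is chosen so that $u^{BK}$ enters only through the $\interleave\cdot\interleave_h$-continuity term and the consistency term.
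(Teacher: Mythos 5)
Your proposal is correct and follows essentially the same route as the paper: the same splitting around $\Pi_h^{\rm IFE}u$, with coercivity (Lemma~\ref{lem_Coercivity}), continuity (\ref{conti}), the norm equivalence (Lemma~\ref{lema_equ}) applied only to the discrete error $\eta\in V_{h,0}^{\rm IFE}$, the consistency bound (Lemma~\ref{lem_consis}), and the interpolation estimate (Lemma~\ref{ener_app}). The only cosmetic difference is that the paper bounds $\interleave \eta\interleave_h^2\leq C A_h(\eta,\eta)$ at the outset by combining coercivity with the norm equivalence, whereas you work with $\|\eta\|_h$ first and upgrade at the end; the two are equivalent.
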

\begin{proof}
The triangle inequality gives 
\begin{equation}\label{pro_lll_tri}
\interleave u^{BK}-u_h \interleave_h\leq\interleave u^{BK}-\Pi_h^{\rm IFE}u \interleave_h+\interleave \Pi_h^{\rm IFE}u-u_h \interleave_h.
\end{equation}
For simplicity of notation, let $e_h:=\Pi_h^{\rm IFE}u-u_h$. From Lemmas~\ref{lem_Coercivity} and \ref{lema_equ}, we have
\begin{equation*}
\begin{aligned}
\interleave e_h \interleave^2_h&\leq CA_h(\Pi_h^{\rm IFE}u-u_h,e_h)\\
&\leq CA_h(\Pi_h^{\rm IFE}u-u^{BK},e_h)+CA_h(u^{BK}-u_h,e_h).
\end{aligned}
\end{equation*}
By the continuity (\ref{conti}) and Lemma~\ref{lem_consis} we further have
\begin{equation*}
\interleave \Pi_h^{\rm IFE}u-u_h \interleave_h \leq C\interleave \Pi_h^{\rm IFE}u-u^{BK}\interleave_h+Ch(\|u\|_{H^2(\cup\Omega^\pm)}+\|f\|_{L^2(\Omega)}).
\end{equation*}
Substituting this into (\ref{pro_lll_tri}) and using Lemma~\ref{ener_app} yields the desired result.
\end{proof}
\begin{remark}\label{remark_h1_err}
We also have the following error estimate for the exact solution 
\begin{equation*}
\| u-u_h\|_h\leq Ch(\|u\|_{H^2(\cup\Omega^\pm)}+\|f\|_{L^2(\Omega)}),
\end{equation*}
which is obtained by using the triangle inequality $\| u-u_h\|_h\leq \| u-u^{BK}\|_h+\|u^{BK}-u_h\|_h$ and the estimates (\ref{est_BK}) and (\ref{H_1_error_method2}).
\end{remark}

\subsection{Condition number analysis}
With the help of the inverse inequality (\ref{inverse_IFE}) and the relation (\ref{rela_PI_1})  we can obtain the following theorem showing that  the condition number of the stiffness matrix of the proposed IFE method has the usual bound $O(h^{-2})$ with the hidden constant independent of the  interface location relative to the mesh.
\begin{lemma}
Let $\{\phi_i : i=1,...,N_{J}\}$ be the basis for  $V_{h,0}^{\rm IFE}$ and ${\rm \mathbf{A}}$ be the stiffness matrix defined by
${\rm \mathbf{A}}(i,j)=A_h(\phi_i,\phi_j)~~\forall i,j=1,...,N_{J}$. Suppose the family of triangulations is also quasi-uniform, i.e.,  there is a constant $C$ such that $h_T^{-1}\leq Ch^{-1}$ for any $T\in \mathcal{T}_h$ and any triangulation $ \mathcal{T}_h$.  Then the $l_2$ condition number, ${\rm cond}_2({\rm \mathbf{A}})$, of ${\rm \mathbf{A}}$ is bounded by 
\begin{equation*}
{\rm cond}_2({\rm \mathbf{A}})\leq Ch^{-2},
\end{equation*}
where the constant $C$ is independent of $h$ and the interface location relative to the mesh.
\end{lemma}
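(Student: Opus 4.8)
The plan is to bound $\mathrm{cond}_2(\mathbf{A})=\lambda_{\max}(\mathbf{A})/\lambda_{\min}(\mathbf{A})$ via the Rayleigh quotient; note that $\mathbf{A}$ is symmetric by construction and positive definite by Lemma~\ref{lem_Coercivity}, so this ratio is well defined. Given a coefficient vector $\mathbf{x}=(x_1,\dots,x_{N_J})^T$, I would set $v_h=\sum_{i=1}^{N_J}x_i\phi_i\in V_{h,0}^{\rm IFE}$, so that $\mathbf{x}^T\mathbf{A}\mathbf{x}=A_h(v_h,v_h)$ and, since $\{\phi_i\}$ is the nodal basis dual to the face functionals, $x_i=\mathcal{N}_i(v_h)$. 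The whole estimate then reduces to controlling $A_h(v_h,v_h)$ from above and below by $\|\mathbf{x}\|_{l_2}^2$ times the appropriate power of $h$.

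The key ingredient is the mesh-scaled norm equivalence
\begin{equation*}
c\,h^{N}\,\|\mathbf{x}\|_{l_2}^2\ \le\ \|v_h\|_{L^2(\Omega)}^2\ \le\ C\,h^{N}\,\|\mathbf{x}\|_{l_2}^2,
\end{equation*}
with $c,C$ independent of $h$ and of the interface location. On a non-interface element $v_h|_T$ is an ordinary Crouzeix--Raviart polynomial, for which a scaling argument on the reference element gives $\|v_h\|_{L^2(T)}^2\simeq h_T^{N}\sum_{j=1}^{N+1}|\mathcal{N}_{j,T}(v_h)|^2$. On an interface element $v_h|_T\in S_h(T)$, and by (\ref{rela_PI_1}) with $m=0$ one has $\|v_h\|_{L^2(T)}^2\simeq\|\Pi_T v_h\|_{L^2(T)}^2$ with constants that do not see the interface; since $\mathcal{N}_{j,T}(\Pi_T v_h)=\mathcal{N}_{j,T}(v_h)$, the same element-wise scaling applies. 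Summing over $T\in\mathcal{T}_h$, invoking quasi-uniformity ($h_T\simeq h$) and the fact that each interior face is shared by exactly two elements while boundary faces carry vanishing degrees of freedom in $V_{h,0}^{\rm IFE}$, yields $\sum_{T}\sum_{j}|\mathcal{N}_{j,T}(v_h)|^2\simeq\|\mathbf{x}\|_{l_2}^2$ and hence the displayed equivalence.

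With this in hand the two eigenvalue bounds follow quickly. For the upper bound I would use $A_h(v_h,v_h)\le\interleave v_h\interleave_h^2\le C\|v_h\|_h^2\le C\sum_{T\in\mathcal{T}_h}|v_h|_{H^1(\cup T_h^\pm)}^2$ (by (\ref{conti}) and Lemma~\ref{lema_equ}), then the inverse inequality — Lemma~\ref{lema_inverse} on $\mathcal{T}_h^\Gamma$ and the standard one elsewhere — together with quasi-uniformity to get $\|v_h\|_h^2\le Ch^{-2}\|v_h\|_{L^2(\Omega)}^2\le Ch^{N-2}\|\mathbf{x}\|_{l_2}^2$, i.e.\ $\lambda_{\max}(\mathbf{A})\le Ch^{N-2}$. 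For the lower bound, Lemma~\ref{lem_Coercivity} and the Poincar\'e--Friedrichs inequality (\ref{poin_ineq}) give $A_h(v_h,v_h)\ge\frac12\|v_h\|_h^2\ge c\|v_h\|_{L^2(\Omega)}^2\ge c\,h^{N}\|\mathbf{x}\|_{l_2}^2$, i.e.\ $\lambda_{\min}(\mathbf{A})\ge c\,h^{N}$. Dividing yields $\mathrm{cond}_2(\mathbf{A})\le Ch^{-2}$.

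The main obstacle is the \emph{lower} half of the norm equivalence on interface elements: a priori one cannot exclude that $\|v_h\|_{L^2(T)}$ degenerates relative to its face averages when the interface cuts off a vanishingly small portion of $T$. This is precisely what relation (\ref{rela_PI_1}) rules out, and that relation in turn rests on the interface-independent inverse inequality (\ref{inverse_IFE}) and on the explicit basis bounds of Theorem~\ref{lem_est_IFEbasis}; it is here that the integral-value degrees of freedom (as opposed to nodal values) are essential. Everything else is routine bookkeeping with shape-regularity and quasi-uniformity.
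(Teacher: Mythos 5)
Your proposal is correct and takes essentially the same route as the paper: it bounds the Rayleigh quotient by relating the coefficient vector to $\|v_h\|_{L^2(\Omega)}$ through the Crouzeix--Raviart interpolant via the interface-independent equivalence (\ref{rela_PI_1}), then uses the inverse inequality (\ref{inverse_IFE}) with quasi-uniformity for $\Lambda_{\rm max}$ and coercivity (\ref{Coercivity}) together with the Poincar\'e--Friedrichs inequality (\ref{poin_ineq}) for $\Lambda_{\rm min}$. These are exactly the ingredients of the paper's proof, so no gap remains.
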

\begin{proof}
For any vector $\mathbf{v}\in\mathbb{R}^{N_J}$, there is a function $v_h\in V_{h,0}^{\rm IFE}$ such that $v_h=\sum_{i=1}^{N_J}\mathbf{v}(i)\phi_i$. Noticing that $\Pi_h v_h$ is the corresponding function belonging to the standard Crouzeix-Raviart finite element space $V_h$, we have the following standard result
\begin{equation*}
ch^{-N}\|\Pi_h v_h\|^2_{L^2(\Omega)}\leq  |\mathbf{v}|^2=\mathbf{v}^T\mathbf{v}\leq Ch^{-N}\|\Pi_h v_h\|^2_{L^2(\Omega)},
\end{equation*}
where $c$ and $C$ are general constants.
From the inequality $h_T^{-1}\leq Ch^{-1}$ (the quasi-uniform assumption) and the inverse inequality (\ref{inverse_IFE}) for IFE functions, it holds  
\begin{equation*}
 \|v_h\|^2_h \leq C\sum_{T\in\mathcal{T}_h}\|\nabla v_h \|^2_{L^2( T)}\leq Ch^{-2}\| v_h\|^2_{L^2(\Omega)}.
\end{equation*}
Therefore, using the above inequalities we have
\begin{equation*}
\begin{aligned}
\Lambda_{{\rm max}}({\rm \mathbf{A}} )&=\max_{\mathbf{v}\in \mathbb{R}^{N_J}}\frac{ \mathbf{v}^T {\rm \mathbf{A}} \mathbf{v} }{ \mathbf{v}^T\mathbf{v}}\leq \max_{v_h\in V_{h,0}^{\rm IFE}}\frac{ A_h(v_h,v_h) }{ ch^{-N}\|\Pi_hv_h\|^2_{L^2(\Omega)}}\\
&\leq \max_{v_h\in V_{h,0}^{\rm IFE}}\frac{ C\interleave v_h \interleave^2_h }{ h^{-N}\|v_h\|^2_{L^2(\Omega)}}\leq \max_{v_h\in V_{h,0}^{\rm IFE}}\frac{ C\| v_h \|^2_h }{ h^{-N}\|v_h\|^2_{L^2(\Omega)}}\\
&\leq \max_{v_h\in V_{h,0}^{\rm IFE}}\frac{ Ch^{-2} \|v_h\|^2_{L^2(\Omega)}  }{ h^{-N}\|v_h\|^2_{L^2(\Omega)}} \leq Ch^{N-2},
\end{aligned}
\end{equation*}
where we have used (\ref{rela_PI_1}) and (\ref{conti}) in the second inequality  and  Lemma~\ref{lema_equ} in the third inequality.
Analogously, we can derive 
\begin{equation*}
\begin{aligned}
\Lambda_{{\rm min}}({\rm \mathbf{A}} )&=\min_{\mathbf{v}\in \mathbb{R}^{N_J}}\frac{ \mathbf{v}^T {\rm \mathbf{A}} \mathbf{v} }{ \mathbf{v}^T\mathbf{v}}\geq \min_{v_h\in V_{h,0}^{\rm IFE}}\frac{ A_h(v_h,v_h) }{ Ch^{-N}\|\Pi_hv_h\|^2_{L^2(\Omega)}}\\
&\geq \min_{v_h\in V_{h,0}^{\rm IFE}}\frac{C \| v_h \|^2_h }{ h^{-N}\|v_h\|^2_{L^2(\Omega)}}\geq Ch^{N},
\end{aligned}
\end{equation*}
where we have used (\ref{Coercivity}) in the second inequality.
Combining the above estimates yields the desired result 
\begin{equation*}
{\rm cond}_2({\rm \mathbf{A}})= \frac{\Lambda_{{\rm max}}({\rm \mathbf{A}} )}{\Lambda_{{\rm min}}({\rm \mathbf{A}} )}\leq Ch^{-2}.
\end{equation*} 
\end{proof}
 
\section{Extension to anisotropic interface problems}\label{sec_exten}
In this section we consider the anisotropic interface problem, i.e., the coefficient $\beta(\mathbf{x})$ is replaced by a discontinuous tensor-valued function $\mathbb{B}(\mathbf{x})$. For simplicity, we consider a piecewise constant tensor, i.e., $\mathbb{B}|_{\Omega^\pm}=\mathbb{B}^\pm$, $\mathbb{B}^\pm\in\mathbb{R}^{N\times N}$. We assume there exist constants $b_M^\pm$ and $b_m^\pm$ such that $b_M^\pm\geq \mathbf{y}^T\mathbb{B}^\pm \mathbf{y}\geq b_m^\pm>0$ for all $\mathbf{y}\in \mathbb{R}^N$ with $\mathbf{y}^T\mathbf{y}=1$.
The extension of our IFE method to this case is obvious. Next, we show that the analysis can also be extended to this case easily.

Throughout our previous analysis, it is no hard to see that the key is the unisolvence of IFE basis functions and the estimate (\ref{pro_basi_fenmu}). 
In the following we show that these results also hold for tensor-valued coefficients. 
On each interface element $T$, the local IFE space now is 
\begin{equation*}
S_h(T):=\{\phi\in L^2(T) : \phi|_{T_h^\pm}\in  \mathbb{P}_1(T_h^\pm),~ [\phi]_{\Gamma_{h,T}}=0,~ [\mathbb{B}_T\nabla \phi\cdot \mathbf{n}_h]_{\Gamma_{h,T}}=0 \},
\end{equation*}
where $\mathbb{B}_T$ is a piecewise constant tensor defined by $\mathbb{B}_T|_{T_h^\pm}=\mathbb{B}^\pm$.
Substituting (\ref{phi_decomp}) into  the jump condition $[\mathbb{B}_T\nabla \phi\cdot \mathbf{n}_h]_{\Gamma_{h,T}}=0$ we have
\begin{equation*}
[\mathbb{B}_T\nabla \phi_J\cdot \mathbf{n}_h]_{\Gamma_{h,T}}\alpha=-[\mathbb{B}_T\nabla \phi_0\cdot \mathbf{n}_h]_{\Gamma_{h,T}}.
\end{equation*}
It is clear that 
\begin{equation*}
\mathbb{B}_T\nabla \phi_J\cdot \mathbf{n}_h=\mathbf{n}_h^T\mathbb{B}_T\mathbf{n}_h(\nabla \phi_J\cdot \mathbf{n}_h)+\sum_{i=1}^{N-1}\mathbf{n}_h^T\mathbb{B}_T\mathbf{t}_{i,h}(\nabla \phi_J\cdot \mathbf{t}_{i,h}).
\end{equation*}
By (\ref{def_phiJ}) and (\ref{phiJ_pro}), we have
\begin{equation*}
\begin{aligned}
[\mathbf{n}_h^T\mathbb{B}_T\mathbf{n}_h(\nabla \phi_J\cdot \mathbf{n}_h)]_{\Gamma_{h,T}}
&=\mathbf{n}_h^T\mathbb{B}_T^+\mathbf{n}_h(\nabla \phi_J^-\cdot \mathbf{n}_h+1)-\mathbf{n}_h^T\mathbb{B}_T^-\mathbf{n}_h(\nabla \phi_J^-\cdot \mathbf{n}_h)\\
&=\mathbf{n}_h^T\mathbb{B}_T^+\mathbf{n}_h(1+(1-\rho) \nabla  \phi_J^-\cdot \mathbf{n}_h  )\\
&=\mathbf{n}_h^T\mathbb{B}_T^+\mathbf{n}_h(1+(\rho-1) \nabla  \Pi_T w\cdot \mathbf{n}_h  ),
\end{aligned}
\end{equation*}
where $\rho:=(\mathbf{n}_h^T\mathbb{B}_T^-\mathbf{n}_h)/(\mathbf{n}_h^T\mathbb{B}_T^+\mathbf{n}_h)\geq b_m^-/b_M^+>0$.
By (\ref{phiJ_pro}), we also have 
$$\nabla \phi_J^+\cdot \mathbf{t}_{i,h}=\nabla \phi_J^-\cdot \mathbf{t}_{i,h}=\nabla \Pi_T w\cdot \mathbf{t}_{i,h}.$$ 
Then we obtain
\begin{equation*}
[\mathbb{B}_T\nabla \phi_J\cdot \mathbf{n}_h]_{\Gamma_{h,T}}=\mathbf{n}_h^T\mathbb{B}_T^+\mathbf{n}_h(1+(\rho-1) \nabla  \Pi_T w\cdot \mathbf{n}_h  )+\sum_{i=1}^{N-1}[\mathbf{n}_h^T\mathbb{B}_T\mathbf{t}_{i,h}]_{\Gamma_{h,T}}(\nabla \Pi_T w\cdot \mathbf{t}_{i,h}).
\end{equation*}
By replacing $\mathbf{n}_h$ by $\mathbf{t}_{i,h}$ in the proof of Lemma~\ref{lem_01}, we find 
$\nabla\cdot(w\mathbf{t}_{i,h})|_{T_h^+}=0$, and thus we can prove that $\nabla \Pi_T w\cdot \mathbf{t}_{i,h}=0$. Collecting above results, we get an equation similar to (\ref{eq_mu}),
\begin{equation*}
\left(1+(\rho-1)\nabla \Pi_T w\cdot \mathbf{n}_h\right)\alpha=-(\mathbf{n}_h^T\mathbb{B}_T^+\mathbf{n}_h)^{-1}[\mathbb{B}_T\nabla \phi_0\cdot \mathbf{n}_h]_{\Gamma_{h,T}}.
\end{equation*}
Similarly to Theorem~\ref{theo_basis}, we can use Lemma~\ref{lem_01} to show that the IFE basis functions for this case are also unisolvent on arbitrary triangles/tetrahedrons regardless of the interface.

The remaining analysis of the IFE space and method can be easily adapted to this case if the regularity result (\ref{regular}) holds. For example, in the proof of Lemma~\ref{lem_arxi} the construction of $\Upsilon_T(\mathbf{x})$ should be changed to 
\begin{equation*}
\Upsilon_{T}(\mathbf{x})=z(\mathbf{x})-\widetilde{\Pi}^{\rm IFE}_Tz(\mathbf{x})\quad\mbox{ with }\quad
z(\mathbf{x})=\left\{
\begin{aligned}
&(\mathbf{n}_h^T\mathbb{B}_T^+\mathbf{n}_h)^{-1}(\mathbf{x}-\mathbf{x}_T^P)\cdot\mathbf{n}_h\quad &&\mbox{ if } \mathbf{x}\in T_h^+,\\
&0&&\mbox{ if } \mathbf{x}\in T_h^-.
\end{aligned} 
\right.
\end{equation*}

\begin{remark}
The unisolvence of IFE basis functions for anisotropic interface problems in both 2D and 3D is another advantage of using integral-values as degrees of freedom. It should be noted that the authors in \cite{An2014A} give some counter examples to show that the IFE basis functions based on nodal-value degrees of freedom may not exist even on isosceles right triangles and for SPD tensors. 
\end{remark}

\section{Numerical examples}\label{lem_num}
In this section we present some numerical examples for the proposed IFE method in 3D. The computational domain is $\Omega=(-1,1)^3$. The interface is the zero level set of a given function $\varphi(x,y,z)$ so that $\Omega^+=\{(x,y,z)\in \mathbb{R}^3 : \varphi(x,y,z)>0\}$ and $\Omega^-=\{(x,y,z)\in \mathbb{R}^3 : \varphi(x,y,z)<0\}$. The exact solution is $u|_{\Omega^\pm}=u^\pm$ with given $u^+$ and $u^-$. We use uniform meshes of the domain $\Omega$, consisting of $M\times M\times M$ equally sized cubes. Each of these cubes is then subdivided into six tetrahedrons. In all examples, the discrete interface is chosen as $\Gamma_h=\{(x,y,z)\in\mathbb{R}^3 : I_h\varphi=0\}$, and the $L^2$ and $H^1$ errors are computed via
\begin{equation*}
\|e_h\|_{L^2}:=\left(\sum_{s=\pm}\|u^s-u_h\|^2_{L^2(\Omega_h^s)}\right)^{1/2},\quad|e_h|_{H^1}:=\left(\sum_{s=\pm}\|\nabla_h u^s-\nabla_h  u_h\|^2_{L^2(\Omega_h^s)} \right)^{1/2}.
\end{equation*}
We use the explicit formula (\ref{express_IFE_basis}) to compute the IFE basis functions in the code. 

\textbf{Example 1.} The coefficient $\beta(x,y,z)$ is a piecewise constant, i.e., $\beta|_{\Omega^\pm}=\beta^\pm$. The functions $\varphi$, $u^+$ and $u^-$ are chosen as
\begin{equation*}
 \begin{aligned}
 &\varphi(x,y,z) = \sqrt{x^2+y^2+z^2}-r_0,\\
 &u^+(x,y,z)= (x^2+y^2+z^2)^{3/2}/\beta^++(1/\beta^--1/\beta^+)r_0^3,\\
 &u^-(x,y,z)= (x^2+y^2+z^2)^{3/2}/\beta^-,
 \end{aligned}
\end{equation*}
where $r_0=\pi/6.28$. We test the example with the coefficient $\beta$ ranging from small to large jumps: $\beta^+=2,\beta^-=1$; $\beta^+=1000,\beta^-=1$; $\beta^+=1,\beta^-=1000$. The errors and orders of convergence are shown in Tables \ref{ex1_1}-\ref{ex1_3}. The condition numbers and orders are shown in Table~\ref{ex_cond}. These numerical results indicate that the proposed IFE method achieves the optimal convergence and the condition number of the stiffness matrix has the usual bound $O(h^{-2})$, which are in agreement with our theoretical analysis.

\begin{table}[H]
\caption{ Numerical results for \textbf{Example 1} with  $\beta^+=2$, $\beta^-=1$\label{ex1_1}}
\begin{center}
\begin{tabular}{c|c c|c c}
\hline
       $M$   &     $\|e_h\|_{L^2}$      &  Order  &     $|e_h|_{H^1}$      & Order  \\   \hline
         5  &   3.605E-02 &        &      2.780E-01  &           \\ \hline
        10  &   9.498E-03 &  1.92  &  1.250E-01  & 1.15   \\ \hline
        20  &   2.403E-03 &  1.98  &  5.993E-02  & 1.06   \\ \hline
        40  &   6.037E-04 &  1.99  &  2.919E-02  & 1.04   \\ \hline
        80  &   1.510E-04 &  2.00  &  1.439E-02  & 1.02   \\ \hline
\end{tabular}
\end{center}
\end{table}

\begin{table}[H]
\caption{ Numerical results for \textbf{Example 1} with $\beta^+=1000$, $\beta^-=1$\label{ex1_2}}
\begin{center}
\begin{tabular}{c|c c|c c}
\hline
       $M$   &     $\|e_h\|_{L^2}$      &  Order  &     $|e_h|_{H^1}$      & Order  \\   \hline 
         5  &    3.204E-02 &            &    8.778E-02 &         \\   \hline 
        10  &   1.065E-02 &  1.59   &    5.194E-02 &  0.76   \\   \hline 
        20  &   2.828E-03 &  1.91   &    2.802E-02 &  0.89   \\   \hline 
        40  &   6.983E-04 &  2.02   &    1.103E-02 &  1.34   \\   \hline 
        80  &   1.727E-04 &  2.02   &    4.573E-03 &  1.27   \\   \hline 
\end{tabular}
\end{center}
\end{table}

\begin{table}[H]
\caption{ Numerical results for \textbf{Example 1} with $\beta^+=1$, $\beta^-=1000$\label{ex1_3}}
\begin{center}
\begin{tabular}{c|c c|c c}
\hline
       $M$   &     $\|e_h\|_{L^2}$      &  Order  &     $|e_h|_{H^1}$      & Order  \\   \hline 
         5  &    7.759E-02 &            &    5.953E-01 &         \\   \hline
        10  &   1.998E-02 &  1.96   &    2.553E-01 &  1.22   \\   \hline
        20  &   4.429E-03 &  2.17   &    1.143E-01 &  1.16   \\   \hline
        40  &   1.099E-03 &  2.01   &    5.628E-02 &  1.02   \\   \hline
        80  &   2.721E-04 &  2.01   &    2.788E-02 &  1.01   \\   \hline
\end{tabular}
\end{center}
\end{table}

\begin{table}[H]
\caption{ Condition numbers for  \textbf{Example 1} (denoted by \textbf{Ex1}) and \textbf{Example 2} (denoted by \textbf{Ex2}) \label{ex_cond}}
\begin{center}
\begin{tabular}{c|c c|c c|c c|c c}
\hline
 & \multicolumn{2}{c|}{\textbf{Ex1}: $\beta^+/\beta^-=2$}& \multicolumn{2}{c|}{\textbf{Ex1}: $\beta^+/\beta^-=10^3$}&\multicolumn{2}{c|}{\textbf{Ex1}: $\beta^+/\beta^-=10^{-3}$}&\multicolumn{2}{c}{\textbf{Ex2}}\\ \hline
     $M$  &      ${\rm cond}_2({\rm \mathbf{A}})$         &   Order   &    ${\rm cond}_2({\rm \mathbf{A}})$            &  Order     &      ${\rm cond}_2({\rm \mathbf{A}})$         &   Order    &     ${\rm cond}_2({\rm \mathbf{A}})$       &   Order          \\ \hline
         5   &  1.057E+02   &            &   5.792E+04    &             &     4.215E+05   &           &      1.700E+02   &                    \\ \hline
        10  &  4.346E+02   &  -2.04   &  4.179E+05    & -2.85     &    1.336E+06   &  -1.66    &   6.911E+02   &  -2.02   \\ \hline
        20  &  1.753E+03   &  -2.01   &  1.629E+06    & -1.96     &    9.202E+06   &  -2.78    &   2.782E+03   &  -2.01   \\ \hline
       40  &    7.027E+03  &  -2.00   &  7.574E+06    & -2.22     &    3.956E+07   &   -2.10       &   1.116E+04   &  -2.00   \\ \hline
\end{tabular}
\end{center}
\end{table}

\textbf{Example 2} (Variable coefficient). The functions $\varphi$, $\beta^\pm$ and $u^\pm$ are chosen as
\begin{equation*}
\begin{aligned}
&\varphi(x,y,z)=\frac{x^2}{a^2}+\frac{y^2}{b^2}+\frac{x^2}{c^2}-1,\\
&\beta^+(x,y,z)=\sin (x+y+z)+2,\\
&\beta^-(x,y,z)=\cos (x+y+z)+2,\\
&u^\pm(x,y,z)=\varphi/\beta^\pm,
\end{aligned}
\end{equation*}
where $a=0.3$, $b=0.5$, $c=0.6$. It is easy to verify that the jump conditions (\ref{p1.2}) and (\ref{p1.3}) are satisfied.

For this interface problem with variable coefficients, in the construction of IFE basis function on an interface element $T$, we simply choose $\beta_T^\pm=\beta^\pm(x_i,y_i,z_i)$ with $(x_i,y_i,z_i)$ being an arbitrary vertex of the element $T$ to satisfy (\ref{betaT}). Numerical results are reported in Tables~\ref{ex2} and \ref{ex_cond}, which show the optimal convergence of the IFE method and the usual bound $O(h^{-2})$ of the condition number.

\begin{table}[H]
\caption{Numerical results for \textbf{Example 2} \label{ex2}}
\begin{center}
\begin{tabular}{c|c c|c c}
\hline
       $M$   &     $\|e_h\|_{L^2}$      &  Order  &     $|e_h|_{H^1}$      & Order  \\   \hline 
          5  &   2.674E-01 &           &     2.336E+00 &         \\   \hline 
        10  &   6.684E-02 &  2.00   &    1.195E+00 &  0.97   \\   \hline 
        20  &   1.642E-02 &  2.03   &    5.989E-01 &  1.00   \\   \hline 
        40  &   4.148E-03 &  1.99   &    2.993E-01 &  1.00   \\   \hline 
        80  &   1.030E-03 &  2.01   &    1.495E-01 &  1.00  \\   \hline 
\end{tabular}
\end{center}
\end{table}

\textbf{Example 3} (Sliver experiment). In this example we investigate the dependence of the condition numbers on small-cut elements and the contrast $\beta^+/\beta^-$. We deliberately create small-cut elements by setting $M=10$ and defining $\varphi(x,y,z)=x_0$ with $x_0$ varying from $0$ to $2/M=0.2$. 

We plot $\log_{10}( {\rm cond}_2({\rm \mathbf{A}}) )$ versus $x_0$ and $\log_{10}(\beta^+/\beta^-)$ in Figure~\ref{Fig_ex3}.  From the numerical results, we can observe that the condition number is not sensitive to the small-cut elements and grows linearly with respective to $\max(\beta^+,\beta^-)/\min(\beta^+,\beta^-)$.

 \begin{figure} [htbp]
\centering
\includegraphics[width=0.45\textwidth]{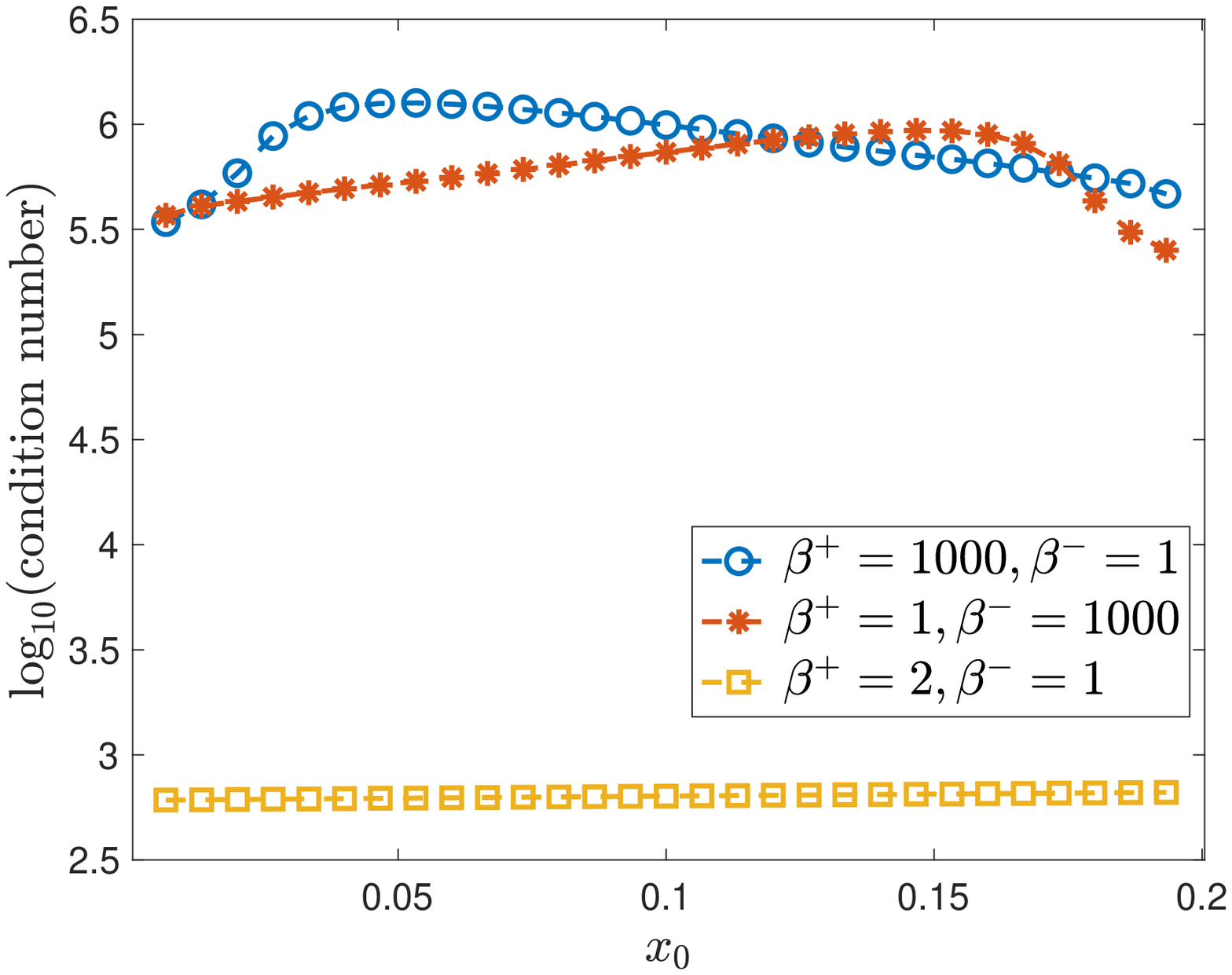}
\includegraphics[width=0.45\textwidth]{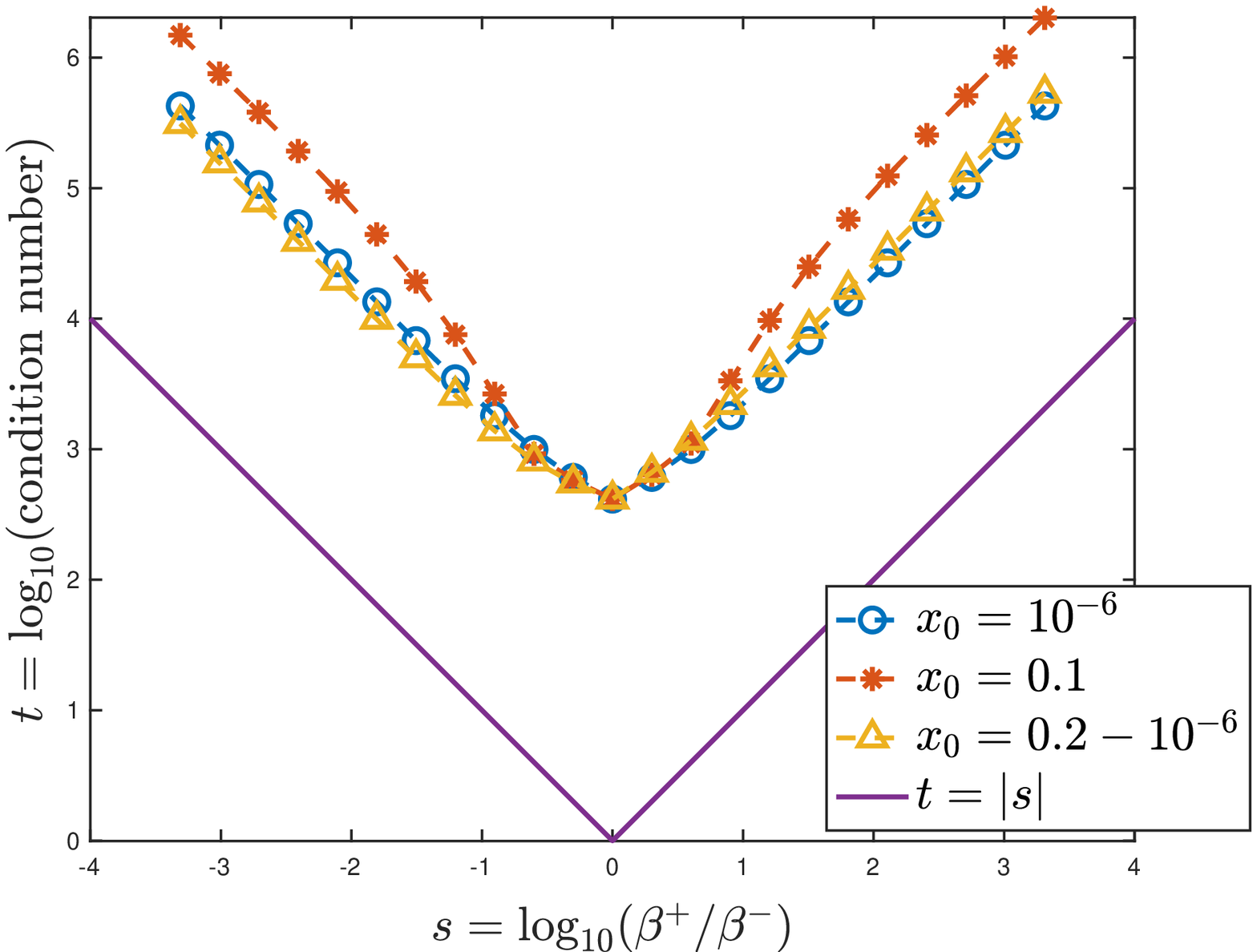}
 \caption{The dependence of the condition numbers on small-cut elements and the contrast $\beta^+/\beta^-$. The domain $(-1,1)^3$ is divided into $10\times 10\times 10$ cubes, and then each of these cubes is divided into six tetrahedrons. The interface is the plane $x=x_0$. It is easy to see that small-cut elements appear as $x_0\rightarrow 0$ or $x_0\rightarrow 0.2$.\label{Fig_ex3}} 
\end{figure}

\section{Concluding remarks}\label{sec_con}
In this paper we have developed and analyzed an immersed Crouzeix-Raviart finite element method for solving 2D and 3D elliptic interface problems with scalar- and tensor-valued coefficients on unfitted meshes. We have shown that the IFE basis functions are unisolvent on arbitrary triangles/tetrahedrons cut by arbitrary interfaces and the IFE space has optimal approximation capabilities for the functions satisfying the interface conditions. The proposed method is easy to implement because that the curved interface is approximated by a continuous piecewise linear function via discrete level set functions and the coefficient is also approximated according to the discrete interface. We provide a complete error analysis of the proposed method taking into account all aspects of the approximation. The condition number the stiffness matrix of the proposed method is also proved to have the usual bound as that of conventional finite element methods. Throughout the analysis, the involved constants are independent of the mesh size and the interface position relative to the mesh.

\bibliographystyle{plain}
\bibliography{refer}
\end{document}